\definecolor{labelkey}{rgb}{0,0.08,0.45}
\definecolor{rekey}{rgb}{0,0.6,0.0}
\definecolor{Brown}{rgb}{0.45,0.0,0.05}
\newcommand{\wk}{\ensuremath{\operatorname{w*}}}
\newcommand{\scal}[2]{\langle{{#1},{#2}}\rangle}
\newcommand{\RR}{\ensuremath{\mathbb R}}
\newcommand{\RX}{\ensuremath{\,\left]-\infty,+\infty\right]}}
\newcommand{\NN}{\ensuremath{\mathbb N}}
\newcommand{\thalb}{\ensuremath{\tfrac{1}{2}}}
\newcommand{\menge}[2]{\big\{{#1} \mid {#2}\big\}}
\newcommand{\diff}{\mathrm{d}}
\newcommand{\qede}{\hspace*{\fill}$\Diamond$\medskip}
\newcommand{\To}{\ensuremath{\rightrightarrows}}
\newcommand{\di}{\ensuremath{\operatorname{d}}}
\newcommand{\dom}{\ensuremath{\operatorname{dom}}}
\newcommand{\cont}{\ensuremath{\operatorname{cont}}}
\newcommand{\gra}{\ensuremath{\operatorname{gra}}}
\newcommand{\epi}{\ensuremath{\operatorname{epi}}}
\newcommand{\prox}{\ensuremath{\operatorname{prox}}}
\newcommand{\inte}{\ensuremath{\operatorname{int}}}
\newcommand{\ran}{\ensuremath{\operatorname{ran}}}
\newcommand{\Id}{\ensuremath{\operatorname{Id}}}
\renewcommand{\iff}{\ensuremath{\Leftrightarrow}}
\renewcommand{\phi}{\ensuremath{\varphi}}
\newcommand{\bx}{\bar x}
\newtheorem{theorem}{Theorem}[section]
\newtheorem{lemma}[theorem]{Lemma}
\newtheorem{fact}[theorem]{Fact}
\newtheorem{corollary}[theorem]{Corollary}
\newtheorem{proposition}[theorem]{Proposition}
\theoremstyle{plain}{\theorembodyfont{\rmfamily}
}
\theoremstyle{plain}{\theorembodyfont{\rmfamily}
}
\theoremstyle{plain}{\theorembodyfont{\rmfamily}
}
\theoremstyle{plain}{\theorembodyfont{\rmfamily}
\newtheorem{example}[theorem]{Example}}
\theoremstyle{plain}{\theorembodyfont{\rmfamily}
\newtheorem{remark}[theorem]{Remark}}
\newtheorem{conj}[theorem]{Conjecture}
\newtheorem{problem}[theorem]{Problem}
\theoremstyle{plain}{\theorembodyfont{\rmfamily}
}
\begin{document}

\title{\bf Applications of Convex Analysis within Mathematics}
\author{Francisco J. Arag\'{o}n Artacho\thanks{Centre for Computer Assisted Research Mathematics and its Applications (CARMA), University of Newcastle, Callaghan, NSW 2308, Australia. E-mail:
\texttt{francisco.aragon@ua.es}},\;
Jonathan M.
Borwein\thanks{Centre for Computer Assisted Research Mathematics and its Applications (CARMA), University of Newcastle, Callaghan, NSW 2308, Australia. E-mail:
\texttt{jonathan.borwein@newcastle.edu.au}.
Laureate Professor at the University of Newcastle and Distinguished Professor at  King
Abdul-Aziz University, Jeddah. },\;
Victoria Mart\'{i}n-M\'{a}rquez\thanks{Departamento de An\'{a}lisis Matem\'{a}tico, Universidad de Sevilla, Spain. E-mail:
\texttt{victoriam@us.es}},\\
and Liangjin\
Yao\thanks{Centre for Computer Assisted Research Mathematics and its Applications (CARMA), University of Newcastle, Callaghan, NSW 2308, Australia.
E-mail:  \texttt{liangjin.yao@newcastle.edu.au}.}}

\date{July 19, 2013} \maketitle

\begin{abstract}
In this paper, we study convex analysis and its theoretical
applications. We first apply  important tools of convex analysis to
Optimization and to Analysis.  We then show  various deep applications  of convex
analysis and especially infimal convolution in Monotone Operator Theory. Among other things, we recapture the Minty  surjectivity theorem in Hilbert space, and present a new proof of the sum
theorem in reflexive spaces.  More technically, we also discuss
autoconjugate representers for  maximally monotone operators. Finally,
we consider various other applications in mathematical analysis.
\end{abstract}

\noindent {\bfseries 2010 Mathematics Subject Classification:}\\
{Primary 47N10, 90C25;
Secondary 47H05,  47A06, 47B65}\\

\noindent {\bfseries Keywords:} Adjoint, Asplund averaging, autoconjugate  representer,  Banach limit,  Chebyshev set, convex functions, Fenchel duality,
 Fenchel conjugate, Fitzpatrick function, Hahn--Banach extension theorem,
 infimal convolution,
linear relation, Minty surjectivity theorem, maximally monotone operator, monotone
operator, Moreau's decomposition, Moreau envelope, Moreau's max formula, Moreau--Rockafellar duality, normal cone operator, renorming, resolvent, Sandwich theorem, subdifferential
operator, sum theorem,  Yosida approximation.

\allowdisplaybreaks[1]

\section{Introduction}

While other articles in this collection look at the applications of Moreau's seminal work, we have opted to illustrate the power of his ideas  theoretically within optimization theory and within mathematics more generally.  Space constraints preclude being comprehensive, but we think the presentation made shows how elegantly much of modern analysis can be presented thanks to the work of Jean-Jacques Moreau and others.

\subsection{Preliminaries}

Let $X$ be a real Banach space with norm $\|\cdot\|$ and dual norm $\|\cdot\|_*$. When there is no ambiguity we suppress the $*$.  We write  $X^*$ and $\langle\,\cdot\,,\cdot\,\rangle$ for the real dual space of continuous linear
functions and the duality paring, respectively, and denote the closed unit ball  by  $B_X:= \{x \in
X \mid \|x\| \le 1\}$ and  set $\NN:=\{1,2,3,\ldots\}$. We identify
$X$ with its canonical image in the bidual space $X^{**}$. A set $C \subseteq X$ is said to be {\em convex} if it contains all line segments between its members:
$\lambda x + (1 - \lambda) y \in C$ whenever $x,y \in C$ and $0 \le \lambda \le 1$.

Given a subset $C$ of $X$,
$\inte C$ is the \emph{interior} of $C$ and
$\overline{C}$ is   the
\emph{norm closure} of $C$.
 For a set $D\subseteq X^*$, $\overline{D}^{\wk}$
is the weak$^{*}$ closure of $D$.
The \emph{indicator function} of $C$, written as $\iota_C$, is defined
at $x\in X$ by
\begin{align}
\iota_C (x):=\begin{cases}0,\,&\text{if $x\in C$;}\\
+\infty,\,&\text{otherwise}.\end{cases}\end{align}
The \emph{support function} of $C$, written as $\sigma_C$,
 is defined by $\sigma_C(x^*):=\sup_{c\in C}\langle c,x^*\rangle$.
  There is also a naturally
associated  (metric) {\em distance function}, that is,
\begin{equation}\label{note:distance-function}
\di_C(x) := \inf\left\{\|x - y\| \mid y \in C\right\}.
\end{equation}
Distance functions play a central role in convex analysis, both theoretically and algorithmically.

Let $f\colon X\to \RX$ be a function. Then
$\dom f:= f^{-1}(\RR)$ is the \emph{domain} of $f$, and
the \emph{lower level sets} of a function $f:X \to \RX$ are the sets
$\{x \in X\mid f(x) \le \alpha\}$ where $\alpha \in \RR$.
The \emph{epigraph} of $f$ is $\epi f := \{(x,r)\in
X\times\RR\mid f(x)\leq r\}$.  We will denote the set of points of
continuity of $f$ by $\cont f$. The function $f$ is said to be \emph{convex} if for any $x,y\in\dom f$ and any $\lambda\in[0,1]$, one has
$$f(\lambda x+(1-\lambda)y)\leq\lambda f(x)+(1-\lambda)f(y).$$
We say $f$ is proper if $\dom f\neq\varnothing$.
Let $f$ be proper. The \emph{subdifferential} of
$f$ is defined by
   $$\partial f\colon X\To X^*\colon
   x\mapsto \{x^*\in X^*\mid \scal{x^*}{y-x}\leq f(y)-f(x), \text{ for all }y\in X\}.$$
By the definition of $\partial f$,  even when
$ x \in \dom f$, it is possible that $\partial f( x)$ may be empty. For example $\partial f(0)=\varnothing$ for $f(x):=-\sqrt{x}$ whenever $x\geq 0$ and $f(x):=+\infty$ otherwise.
If $x^* \in \partial f(
x)$ then $x^*$ is said to be a {\em subgradient} of $f$ at $
x$.
An important example of a subdifferential is the {\em normal cone} to a convex set $C \subseteq X$ at a point $x \in C$ which is defined as $N_C(x):=\partial \iota_C(x)$.

Let $g\colon X\rightarrow\RX$.
Then the \emph{inf-convolution} $f\Box g$
is the function defined on $X$ by
\begin{equation*}f\Box g\colon
x \mapsto \inf_{y\in X}
\big\{f(y)+g(x-y)\big\}.
\end{equation*}
(In~\cite{M63} Moreau studied   inf-convolution when $X$ is an arbitrary commutative semigroup.) Notice that, if both $f$ and $g$ are convex, so it is $f\Box g$ (see, e.g.,~\cite[p.~17]{Mnotes}).

We use the convention that $(+\infty)+(-\infty)=+\infty$
 and $(+\infty)-(+\infty)=+\infty$.
We will say a function $f:X \to \RX$ is {\em Lipschitz on a subset
$D$} of $\dom f$ if there is a constant $M \ge 0$ so that $|f(x) - f(y)|
\le M\|x - y\|$ for all $x,y \in D$. In this case $M$ is said to be a {\em Lipschitz
constant} for $f$ on $D$. If for each $x_0 \in D$, there is an open
set $U \subseteq D$ with $x_0 \in U$ and a constant $M$ so that $|f(x) - f(y)| \le M\|x - y\|$
for all $x,y \in U$, we will say $f$ is {\em locally Lipschitz on
$D$}. If $D$ is the entire space, we simply say $f$ is Lipschitz or
locally Lipschitz respectively.

Consider a function $f:X \to \RX$; we  say $f$ is
\emph{lower-semicontinuous}\ (lsc) if $\liminf_{x \to \bx} f(x) \ge
f(\bx)$ for all $\bx \in X$, or equivalently, if $\epi f$ is closed.
The function $f$ is said to be \emph{sequentially weakly lower
semi-continuous} if for every $\bx\in X$ and every sequence
$(x_n)_{n\in \NN}$ which is weakly convergent to $\bx$, one has
$\liminf_{n \to \infty} f(x_n) \ge f(\bx)$. This is a useful distinction since there are infinite dimensional Banach spaces (\emph{Schur spaces} such as $\ell^1$) in which weak and norm convergence coincide for sequences, see \cite[p. 384, esp. Thm 8.2.5]{BorVan}.

\subsection{Structure of this paper}

The remainder of this paper is organized as follows. In
Section~\ref{s:aux}, we describe results about Fenchel conjugates and the subdifferential operator, such as Fenchel duality, the Sandwich theorem, etc. We also look at some interesting convex functions and inequalities. In Section~\ref{SecChev:1}, we discuss the Chebyshev problem from abstract approximation.
In Section \ref{SecMon:1}, we show applications of convex analysis in Monotone Operator Theory.
We reprise such results as
the Minty surjectivity theorem, and present a new proof of the sum theorem in reflexive spaces.  We also discuss
 Fitzpatrick's problem on so called autoconjugate representers for  maximally monotone operators. In Section \ref{Sec:Other} we discuss various other applications.

\section{Subdifferential operators, conjugate functions \& Fenchel duality}\label{s:aux}

We begin with some fundamental properties of convex sets and convex functions. While many results hold in all locally convex spaces, some of the most important such as \ref{basic-prin:E4}(b) in the next Fact do not.

\begin{fact}\emph{(Basic properties~{\cite[Ch. 2 and 4]{BorVan}}.)}\label{basic-prin}
The following hold.
\begin{enumerate}
\item\label{basic-prin:E1} The (lsc) convex functions form a convex cone closed under pointwise suprema: if  $f_{\gamma}$
is convex (and lsc) for each $\gamma \in \Gamma$  then so is $x \mapsto\sup_{\gamma \in \Gamma} f_{\gamma}(x)$.

\item A function $f$ is convex if and only if $\epi f$ is convex if and only if $\iota_{\epi f}$
is convex.

\item Global minima and local minima in the domain coincide for proper convex functions.

\item \label{basic-prin:E4} Let $f$ be a proper convex function and let $x\in\dom f$. (a) $f$ is locally Lipschitz at $x$ if and only $f$ is continuous at $x$ if and only if
 $f$ is locally bounded at $x$. (b) Additionally, if $f$ is lower semicontinuous, then $f$ is continuous at every point in $\inte\dom f$.

\item \label{basic-prin:E5}A proper  lower semicontinuous and convex function
is bounded
from below by a continuous affine function.

\item If $C$ is a nonempty set, then $\di_C(\cdot)$ is non-expansive (i.e., is  a Lipschitz function with constant one). Additionally, if $C$ is convex, then
$\di_C(\cdot)$ is a convex function.

\item If $C$ is a convex set, then
$C$ is weakly closed if and only if it is norm closed.
\item \label{three-slope}\emph{Three-slope inequality}: Suppose $f:\RR\to]-\infty,\infty]$ is convex and $a<b<c$. Then
$$\frac{f(b)-f(a)}{b-a}\leq\frac{f(c)-f(a)}{c-a}\leq \frac{f(c)-f(b)}{c-b}.$$
\end{enumerate}
%
%
\end{fact}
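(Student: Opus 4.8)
The plan is to derive all three expressions from a single application of the definition of convexity, by recognizing that the middle point $b$ is a convex combination of the endpoints $a$ and $c$. Concretely, I would set
$$\lambda:=\frac{c-b}{c-a}\in\zeroun,\qquad 1-\lambda=\frac{b-a}{c-a},$$
and verify the elementary identity $b=\lambda a+(1-\lambda)c$; since $a<b<c$, both weights are strictly positive and sum to $1$, and all three denominators $b-a$, $c-a$, $c-b$ are strictly positive.

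The key inequality then comes directly from convexity of $f$ applied to this combination:
$$f(b)\le \lambda f(a)+(1-\lambda)f(c)=\frac{c-b}{c-a}\,f(a)+\frac{b-a}{c-a}\,f(c).$$
Geometrically this just records that the point $(b,f(b))$ lies on or below the chord joining $(a,f(a))$ to $(c,f(c))$; the three quotients in the statement are precisely the slopes of the secants over $[a,b]$, $[a,c]$, and $[b,c]$, and the claim is that these slopes are nondecreasing.

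From the displayed inequality the two desired estimates follow by pure rearrangement. For the left inequality I would subtract $f(a)$ from both sides and factor, obtaining
$$f(b)-f(a)\le \frac{b-a}{c-a}\,\big(f(c)-f(a)\big),$$
and then divide by $b-a>0$. For the right inequality I would instead bound $f(c)-f(b)$ from below using the same convexity estimate, which yields
$$f(c)-f(b)\ge \frac{c-b}{c-a}\,\big(f(c)-f(a)\big),$$
and then divide by $c-b>0$.

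No step is genuinely hard: the entire content is the one convexity inequality, and the remainder is bookkeeping with positive denominators. The only point requiring a moment's care is the extended-real-valued setting, since $f$ may take the value $+\infty$. If any of $f(a),f(b),f(c)$ equals $+\infty$, one should verify the inequalities directly under the stated conventions (in particular $(+\infty)-(+\infty)=+\infty$); in the typical case where $a,b,c\in\dom f$ all three values are finite and the rearrangements above hold unconditionally.
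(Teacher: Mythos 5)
Your argument for the three-slope inequality, item (viii), is correct and is the standard one: writing $b=\lambda a+(1-\lambda)c$ with $\lambda=\tfrac{c-b}{c-a}$, applying the convexity inequality once, and rearranging with positive denominators gives both estimates; your handling of the extended-real case is adequate, since if $a$ or $c$ lies outside $\dom f$ the chord inequality holds trivially and the difference quotients degenerate in a way consistent with the claimed ordering. Note that the paper itself offers no proof of this Fact --- it is cited wholesale to \cite[Ch.~2 and 4]{BorVan} --- so there is no in-paper argument to compare against; the assessment has to be on the merits.

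The genuine gap is coverage: the Fact asserts eight separate properties and your proposal addresses only the last one. Several of the remaining items are not mere bookkeeping. Item (i) needs the observation that $\epi\big(\sup_{\gamma}f_{\gamma}\big)=\bigcap_{\gamma}\epi f_{\gamma}$, together with the stability of convexity (and closedness) under intersections. Item (iv)(a) requires the argument that a convex function bounded above on a neighbourhood of a point is in fact Lipschitz on a smaller neighbourhood, and (iv)(b) for lower semicontinuous convex functions on a Banach space rests on the Baire category theorem applied to the lower level sets. Item (v) requires a Hahn--Banach separation of a point below the graph from the closed convex set $\epi f$, and item (vii) is Mazur's theorem, again a separation argument. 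None of these follow from the single convexity inequality you invoke, so as a proof of the stated Fact the proposal is substantially incomplete, even though the one part you did prove is sound.
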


The following trivial fact shows the fundamental significance of subgradients in optimization.

\begin{proposition} [Subdifferential at optimality] \label{1.4.3}
Let $f \colon X \rightarrow \RX$ be a proper convex function.
Then the point $\bx\in \dom f$ is a (global)
minimizer of $f$ if and only if $0 \in \partial f(\bx)$.
\end{proposition}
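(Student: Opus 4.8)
The plan is to unwind both the definition of a global minimizer and the definition of the subdifferential, and observe that they are literally the same condition. This is a genuine equivalence that follows directly from the definition given in the Preliminaries, so the proof amounts to tracing the logical content on each side.

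First I would write out the characterization of $\bx$ being a global minimizer: the point $\bx \in \dom f$ minimizes $f$ precisely when $f(\bx) \leq f(y)$ for every $y \in X$, which rearranges to
\begin{equation*}
0 \leq f(y) - f(\bx) \quad \text{for all } y \in X.
\end{equation*}
Next I would recall the defining inequality for the subdifferential: by definition, $0 \in \partial f(\bx)$ means that the zero functional is a subgradient, i.e.
\begin{equation*}
\scal{0}{y - \bx} \leq f(y) - f(\bx) \quad \text{for all } y \in X.
\end{equation*}
Since $\scal{0}{y-\bx} = 0$ for every $y$, this second condition reads exactly $0 \leq f(y) - f(\bx)$ for all $y \in X$, which is identical to the minimizer condition. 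The two statements are therefore equivalent by inspection, and the equivalence holds in both directions simultaneously.

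There is essentially no obstacle here, which is why the statement is flagged as trivial; the only point requiring a word of care is the convention about the extended-real arithmetic. Because $\bx \in \dom f$, the value $f(\bx)$ is finite, so the difference $f(y) - f(\bx)$ is well defined in $\RX$ for every $y$ (taking the value $+\infty$ when $y \notin \dom f$), and in that case the inequality $0 \leq f(y) - f(\bx)$ holds vacuously. Thus the equivalence is unaffected by points outside the domain. Note that convexity of $f$ is not actually needed for this proposition — the argument is purely definitional — though convexity is what makes the conclusion useful, since by Fact~\ref{basic-prin} global and local minima then coincide.
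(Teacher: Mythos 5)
Your proof is correct and is exactly the definitional argument the paper has in mind; the paper labels this proposition a ``trivial fact'' and omits the proof entirely, so there is nothing to diverge from. Your observations that $f(\bx)$ is finite (so the differences are well defined) and that convexity is not actually used are both accurate and worth the sentence each you give them.
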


The {\em directional derivative} of
$f$ at $\bar x \in \dom f$ in the direction $d$ is defined by
\label{note:directional-derivative}
$$
f^\prime(\bar x;d) := \lim_{t \to 0^+} \frac{f(\bar x + td) - f(\bar
x)}{t}
$$
if the limit exists. If $f$ is convex, the directional derivative is everywhere finite at any point of $\inte\dom f$, and it turns out to be Lipschitz at $\cont f$. 
We use the term directional derivative with the understanding that it is actually
a {\em one-sided} directional derivative.

If the directional derivative $f'(\bar x,d)$ exists for all directions $d$ and the operator $f'(\bx)$ defined by $\langle f'(\bx),\cdot\,\rangle:=f'(\bx;\cdot\,)$ is linear and bounded, then we say that $f$ is {\em G\^ateaux differentiable} at $\bar x$, and $f'(\bx)$ is called the \emph{G\^ateaux derivative}. Every function $f:X \to \RX$ which is
lower semicontinuous, convex and G\^ateaux differentiable at $x$, it is continuous at $x$. Additionally, the following properties are relevant for the existence and uniqueness of the subgradients.

\begin{proposition}\emph{(See {\cite[Fact 4.2.4 and Corollary 4.2.5]{BorVan}}.)}
\label{GatPC} Suppose $f: X \to \RX$ is convex.
\begin{enumerate}
\item  If $f$ is G\^ateaux differentiable at
$\bx$, then $f^\prime(\bx) \in \partial f(\bx)$.
\item If $f$ is continuous at $\bx$,
then $f$ is G\^ateaux differentiable at $\bx$ if and only if
$\partial f(\bx)$ is a singleton.
\end{enumerate}
\end{proposition}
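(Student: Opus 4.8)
The plan is to build everything from the monotonicity of difference quotients of a convex function, reserving a Hahn--Banach argument for the one genuinely nontrivial implication. Throughout I write $p(d):=f'(\bx;d)$ for the directional derivative map.

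For (i), I would first record that for fixed $d$ the quotient $t\mapsto\tfrac{f(\bx+td)-f(\bx)}{t}$ is nondecreasing on \RPP; this is immediate from the three-slope inequality of Fact~\ref{basic-prin}(\ref{three-slope}) applied to the convex restriction $t\mapsto f(\bx+td)$. Hence $p(d)=\inf_{t>0}\tfrac{f(\bx+td)-f(\bx)}{t}\leq f(\bx+d)-f(\bx)$, and taking $d=y-\bx$ gives $p(y-\bx)\leq f(y)-f(\bx)$ for every $y\in X$. If $f$ is G\^ateaux differentiable at $\bx$, the left-hand side is $\langle f'(\bx),y-\bx\rangle$, and the resulting inequality for all $y$ is exactly $f'(\bx)\in\partial f(\bx)$.

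For the forward implication of (ii), assume $f$ is continuous and G\^ateaux differentiable at $\bx$; by (i) the set $\partial f(\bx)$ is nonempty. To see it is a singleton, take any $x^*\in\partial f(\bx)$, test the subgradient inequality at $y=\bx+td$ with $t>0$, divide by $t$, and let $t\downarrow 0$ to obtain $\langle x^*,d\rangle\leq p(d)=\langle f'(\bx),d\rangle$ for every $d$. Running the same estimate with $-d$ forces equality, so $x^*=f'(\bx)$ and $\partial f(\bx)=\{f'(\bx)\}$. The substantive direction is the converse. Here continuity at $\bx$ gives $\bx\in\inte\dom f$, so $p$ is finite in every direction and, by the local Lipschitz property noted just before the proposition, there is $M\geq 0$ with $p(d)\leq M\|d\|$; convexity of $f$ makes $p$ positively homogeneous and subadditive, hence sublinear. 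I would then establish the representation $\partial f(\bx)=\{x^*\in X^*\mid\langle x^*,\cdot\rangle\leq p\}$, where ``$\subseteq$'' is the estimate above and ``$\supseteq$'' follows from $\langle x^*,y-\bx\rangle\leq p(y-\bx)\leq f(y)-f(\bx)$. Fixing an arbitrary direction $d_0$, I define $\ell_0(td_0):=t\,p(d_0)$ on the line $\RR d_0$; subadditivity yields $p(d_0)+p(-d_0)\geq p(0)=0$, which is precisely the condition ensuring $\ell_0\leq p$ on $\RR d_0$. The Hahn--Banach theorem extends $\ell_0$ to a linear $\ell\leq p$ on $X$, and $\ell\leq p\leq M\|\cdot\|$ makes $\ell$ bounded, so $\ell\in\partial f(\bx)$ with $\langle\ell,d_0\rangle=p(d_0)$. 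If $\partial f(\bx)=\{x^*\}$ is a singleton, then $\ell=x^*$, so $f'(\bx;d_0)=\langle x^*,d_0\rangle$; as $d_0$ is arbitrary, $f'(\bx;\cdot)$ agrees with the bounded linear functional $x^*$, i.e.\ $f$ is G\^ateaux differentiable at $\bx$.

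The main obstacle is exactly this last half of (ii): one must manufacture, for each direction, a subgradient that \emph{attains} the directional derivative. This is the content of Moreau's max formula $f'(\bx;d)=\max_{x^*\in\partial f(\bx)}\langle x^*,d\rangle$, and it is where continuity at $\bx$ is indispensable --- it simultaneously guarantees finiteness of $p$ (so the Hahn--Banach extension exists) and the bound $p\leq M\|\cdot\|$ (so the extension is genuinely in $X^*$, hence a subgradient). Without continuity the subdifferential may be empty even at a point of $\dom f$, as with $-\sqrt{\,\cdot\,}$ at $0$, and the construction collapses.
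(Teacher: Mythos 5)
Your argument is correct and complete. Note that the paper itself gives no proof of this proposition --- it is quoted from \cite[Fact 4.2.4 and Corollary 4.2.5]{BorVan} --- so there is nothing to diverge from; what you have written is the standard argument those references contain. Your Hahn--Banach construction of a subgradient attaining $p(d_0)=f'(\bx;d_0)$ is exactly Moreau's max formula, which the paper records separately as Theorem~\ref{thm:max-formula}; had you been allowed to cite it, the converse half of (ii) would follow in one line, since a singleton $\partial f(\bx)=\{x^*\}$ forces $f'(\bx;d)=\max_{y^*\in\partial f(\bx)}\langle y^*,d\rangle=\langle x^*,d\rangle$ for all $d$. Your identification of where continuity enters (finiteness of $p$ and the bound $p\le M\|\cdot\|$, so that the extension lands in $X^*$) is also exactly right, and consistent with the paper's counterexample $-\sqrt{\cdot}$ at $0$.
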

\begin{example} We show that part (ii) in Proposition~\ref{GatPC} is not always true  in infinite dimensions without  continuity hypotheses.

\begin{enumerate}
\item [(a)] The indicator of the Hilbert cube $C:=\{x=(x_1,x_2,\ldots) \in \ell^2: |x_n| \le 1/n, \forall n\in\NN\}$ at zero or any other non-support point has a unique subgradient but is nowhere G\^ateaux differentiable.

\item [(b)]  \emph{Boltzmann-Shannon entropy}  $x \mapsto \int_0^1 x(t)\log(x(t)){\rm d} t$ viewed as a lower semicontinuous and  convex function on $L^1[0,1]$ has unique subgradients at $x(t)>0$ a.e. but is nowhere G\^ateaux differentiable (which for a lower semicontinuous and convex function in Banach space implies continuity).
    \end{enumerate}
  That  G\^ateaux differentiability of a convex and lower semicontinuous function implies continuity at the point is a consequence of the Baire category theorem. \qede
\end{example}

The next result proved by Moreau in 1963 establishes the relationship between subgradients and directional derivatives, see also~\cite[page~65]{Mnotes}.  Proofs can be also found in most of the books in variational analysis, see e.g.~\cite[Theorem 4.2.7]{BZ05}.

\begin{theorem}[Moreau's max formula~\cite{M63_2}]  \label{thm:max-formula}
Let
$f \colon X \rightarrow \RX$ be a  convex function and let $d \in X$.
Suppose that $f$ is continuous at $\bar x $. Then, $\partial f(\bar x) \ne \varnothing$ and
\begin{eqnarray}
f'(\bar x;d) = \max\{\langle x^*,d\rangle \mid x^* \in \partial f(\bar x)\}.
\end{eqnarray}
\end{theorem}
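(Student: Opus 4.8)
The plan is to prove Moreau's max formula in two stages: first establish that the directional derivative $d \mapsto f'(\bar x; d)$ is a well-defined, finite, sublinear function of $d$, and then invoke the Hahn--Banach theorem to produce subgradients that attain the directional derivative in each direction. To begin, I would fix the point $\bar x \in \cont f$ and verify, using the three-slope inequality (Fact~\ref{basic-prin}\ref{three-slope}) applied to the one-variable convex function $t \mapsto f(\bar x + t d)$, that the difference quotient $t \mapsto \big(f(\bar x + t d) - f(\bar x)\big)/t$ is nondecreasing in $t > 0$ and bounded below. Monotonicity guarantees the limit defining $f'(\bar x; d)$ exists, while continuity of $f$ at $\bar x$ (hence local Lipschitzness, by Fact~\ref{basic-prin}\ref{basic-prin:E4}(a)) forces the limit to be finite for every direction $d$. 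Convexity of $f$ then yields that $p(d) := f'(\bar x; d)$ is positively homogeneous and subadditive, i.e.\ sublinear.

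Next I would bound $p$ above by a continuous linear functional in a fixed direction. Fix an arbitrary $d_0 \in X$. Since $p$ is sublinear and $p(d_0)$ is finite, the Hahn--Banach theorem provides a linear functional $x^*$ on $X$ with $\langle x^*, d_0 \rangle = p(d_0)$ and $\langle x^*, d \rangle \le p(d)$ for all $d \in X$. The local Lipschitz property of $f$ at $\bar x$ ensures $p(d) \le M \|d\|$ for some constant $M$, so $x^*$ is bounded, hence $x^* \in X^*$. The inequality $\langle x^*, d \rangle \le p(d) = f'(\bar x; d)$, combined with the elementary bound $f'(\bar x; y - \bar x) \le f(y) - f(\bar x)$ coming from monotonicity of the difference quotients, shows $x^* \in \partial f(\bar x)$. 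This simultaneously proves $\partial f(\bar x) \ne \varnothing$.

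To finish, I would assemble the max formula. For any $x^* \in \partial f(\bar x)$ and any direction $d$, the subgradient inequality applied to $y = \bar x + t d$ gives $\langle x^*, d \rangle \le \big(f(\bar x + t d) - f(\bar x)\big)/t$; letting $t \to 0^+$ yields $\langle x^*, d \rangle \le f'(\bar x; d)$, so the supremum over $\partial f(\bar x)$ is at most $f'(\bar x; d)$. The separating functional $x^*$ constructed above for the choice $d_0 = d$ attains equality, so the supremum is achieved and equals $f'(\bar x; d)$, giving a maximum.

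The main obstacle is the correct deployment of Hahn--Banach together with the continuity hypothesis. The abstract extension theorem only delivers a linear dominated functional; it is the continuity of $f$ at $\bar x$ (equivalently local boundedness, via Fact~\ref{basic-prin}\ref{basic-prin:E4}(a)) that upgrades this to a genuine element of $X^*$ by furnishing the norm bound $p(d) \le M\|d\|$. Without this hypothesis the dominating functional need not be continuous and $\partial f(\bar x)$ could be empty, so care must be taken to route the continuity assumption precisely through the boundedness of $p$.
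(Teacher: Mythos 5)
Your proof is correct. Note, however, that the paper does not actually prove Theorem~\ref{thm:max-formula} in the text: it defers to the literature (Moreau's notes and \cite[Theorem 4.2.7]{BZ05}) and, in the remark following Theorem~\ref{Hahn-Banach}, indicates a different derivation via Fenchel duality, observing that the max formula, the Sandwich theorem, the subdifferential sum rule and the Hahn--Banach extension theorem are all inter-derivable. Your route is the classical direct one: monotonicity of difference quotients via the three-slope inequality, finiteness and the bound $p(d)\le M\|d\|$ from local Lipschitzness at a point of continuity, sublinearity of $p=f'(\bar x;\cdot)$, and a Hahn--Banach extension from the line $\RR d_0$ that is automatically norm-bounded and lands in $\partial f(\bar x)$ because $f'(\bar x; y-\bar x)\le f(y)-f(\bar x)$. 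This is self-contained and elementary, and it correctly isolates where the continuity hypothesis enters (upgrading the dominated linear functional to an element of $X^*$); one should only add the small verification that the linear functional $\lambda d_0\mapsto\lambda p(d_0)$ is dominated by $p$ on the whole line, which follows from $0=p(0)\le p(d_0)+p(-d_0)$. The Fenchel-duality route the paper gestures at buys the equivalence circle and generality (it yields attainment in the dual problem directly), whereas your argument makes the theorem available before Fenchel duality is established, which is the logical order in which the paper presents it.
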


\medskip

Let $f:X\rightarrow[-\infty,+\infty]$.
The {\em Fenchel conjugate} (also called the {\em Legendre-Fenchel
conjugate}\footnote{Originally the connection was made between a monotone function on an interval and its inverse. The convex functions then arise by integration.} or {\em transform}) of $f$ is the function \label{note:Fenchel-conjugate}
$f^*:X^* \to [-\infty,+\infty]$ defined by
$$
f^*(x^*) := \sup_{x \in X} \{\langle x^*,x \rangle - f(x) \}.
$$
We can also consider the conjugate of $f^*$ called
the {\em biconjugate} of $f$ and denoted by $f^{**}$.
This is a convex function on $X^{**}$ satisfying $f^{**}|_X\leq f$. A useful and instructive example is $\sigma_C = \iota_C^*$.

\begin{example}\label{ex:conj_norm}
Let $1 < p < \infty$ . If $f(x):=\frac{\|x\|^p}{p}$ for $x\in X$ then $f^*(x^*)=\frac{\|x^*\|_*^q}{q}$, where $\frac{1}{p}+\frac{1}{q}=1$.
Indeed, for any $x^*\in X^*$, one has
\begin{align*}
f^*(x^*)=\sup_{\lambda\in\RR_+}\sup_{\|x\|=1}\left\{\langle x^*,\lambda x\rangle -\frac{\|\lambda x\|^p}{p}\right\}=\sup_{\lambda\in\RR_+}\left\{\lambda \|x^*\|_*-\frac{\lambda^p}{p}\right\}=\frac{\|x^*\|_*^q}{q}.
\end{align*}
\qede
\end{example}

By direct construction and Fact \ref{basic-prin} \ref{basic-prin:E1}, for any function $f$, the  conjugate function $f^*$ is  always convex and lower semicontinuous, and if the domain of
$f$ is nonempty, then $f^*$ never takes the value $-\infty$. The conjugate plays a role in
convex analysis in many ways analogous to the role played by the
Fourier transform in harmonic analysis with infimal convolution, see below, replacing integral convolution and sum replacing product  \cite[Chapter 2.]{BorVan}.

\subsection{Inequalities and their applications}

 An immediate  consequence of the definition is that for $f,g:X \to
[-\infty,+\infty]$, the inequality $f \ge g$ implies $f^* \le g^*$.
 An important result which is straightforward to prove is the following.

\begin{proposition}[Fenchel--Young] \label{prop:Fenchel-Young-fd}
 Let $f:X \to
\RX$. All
points $x^* \in X^*$ and $x \in \dom f$ satisfy the inequality
\begin{equation}
f(x) + f^*(x^*) \ge \langle x^*,x\rangle.
\label{eq:Fenchel-Young-ineq-fd}
\end{equation}
Equality holds if and only if $x^* \in \partial f(x)$.
\end{proposition}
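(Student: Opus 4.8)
The plan is to prove the inequality directly from the definition of the Fenchel conjugate, and then to establish the equality characterization by unwinding both the conjugate and the subdifferential.

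First I would establish the inequality \eqref{eq:Fenchel-Young-ineq-fd}. By the definition of the conjugate, $f^*(x^*) = \sup_{y \in X}\{\langle x^*, y\rangle - f(y)\}$. Taking the particular choice $y = x$ in this supremum yields $f^*(x^*) \geq \langle x^*, x\rangle - f(x)$. Since $x \in \dom f$, the quantity $f(x)$ is a real number, so we may rearrange to obtain $f(x) + f^*(x^*) \geq \langle x^*, x\rangle$, which is precisely the claimed inequality. Note this step uses nothing about convexity of $f$ and requires only that $f(x)$ be finite so that the rearrangement is legitimate.

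Next I would prove the equality characterization. The key observation is that equality in \eqref{eq:Fenchel-Young-ineq-fd} means $f(x) + f^*(x^*) = \langle x^*, x\rangle$, equivalently $\langle x^*, x\rangle - f(x) = f^*(x^*) = \sup_{y \in X}\{\langle x^*, y\rangle - f(y)\}$. This says the supremum defining $f^*(x^*)$ is attained at $y = x$; that is, for every $y \in X$ we have $\langle x^*, y\rangle - f(y) \leq \langle x^*, x\rangle - f(x)$. Rearranging this last inequality gives $\langle x^*, y - x\rangle \leq f(y) - f(x)$ for all $y \in X$, which is exactly the defining condition for $x^* \in \partial f(x)$. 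Each of these manipulations is reversible, so equality holds if and only if $x^* \in \partial f(x)$.

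I do not anticipate a serious obstacle here, as the statement is essentially a tautological restatement of the two definitions in play. The only point requiring mild care is the finiteness of $f(x)$: the hypothesis $x \in \dom f$ guarantees $f(x) \in \RR$, which is what licenses moving $f(x)$ across the inequality without the indeterminate forms that the paper's conventions on $(+\infty)-(+\infty)$ are designed to handle. I would make sure to note that the argument nowhere invokes convexity of $f$, so the result holds for arbitrary proper-at-$x$ functions.
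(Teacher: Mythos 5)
Your proof is correct and is precisely the ``straightforward'' argument the paper has in mind (the paper omits the proof entirely, noting only that it is straightforward): the inequality comes from evaluating the supremum defining $f^*(x^*)$ at $y=x$, and the equality case is the observation that attainment of that supremum at $x$ is, after rearrangement, literally the definition of $x^*\in\partial f(x)$. Your remarks on the finiteness of $f(x)$ and the irrelevance of convexity are both accurate.
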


\begin{example}[Young's inequality]\label{prop:Young}
By taking $f$ as in Example~\ref{ex:conj_norm}, one obtains  directly  from Proposition~\ref{prop:Fenchel-Young-fd}
$$\frac{\|x\|^p}{p}+\frac{\|x^*\|_*^q}{q}\geq\langle x^*,x\rangle,$$
for all $x\in X$ and $x^*\in X^*$, where $p>1$ and $\frac{1}{p}+\frac{1}{q}=1$. When $X=\RR$ one recovers the original Young inequality.\qede
\end{example}

This in turn leads to one of the workhorses of modern analysis:

\begin{example}[H\"{o}lder's inequality]\label{prop:Holder}
Let $f$ and $g$ be measurable on a measure space $(X,\mu)$. Then
\begin{equation}\label{eq:Holder}
\int_X fg\ \mathrm{d}\mu\leq\|f\|_p\|g\|_q,
\end{equation}
where $1 < p < \infty$ and $\frac{1}{p}+\frac{1}{q}=1$. Indeed, by rescaling, we may assume without loss of generality that $\|f\|_p=\|g\|_q=1$. Then Young's inequality in Example~\ref{prop:Young} yields
$$|f(x)g(x)|\leq\frac{|f(x)|^p}{p}+\frac{|g(x)|^q}{q}\quad\text{for } x\in X,$$
and~\eqref{eq:Holder} follows by integrating both sides. The result holds true in the limit for $p=1$ or $p=\infty$.\qede
\end{example}

We next take a brief excursion into special function theory and normed space geometry to emphasize that ``convex functions are everywhere."

\begin{example}[Bohr--Mollerup theorem]\label{ex:bohr}
The \emph{Gamma function} defined for $x>0$ as
$$\Gamma(x):=\int_0^\infty e^{-t}t^{x-1}\diff t=\lim_{n\to\infty}\frac{n!\,n^x}{x(x+1)\cdots(x+n)}$$
is the unique function $f$ mapping the positive half-line to itself and such that (a) $f(1)=1$, (b) $xf(x)=f(x+1)$ and (c) $\log f$ is a convex function.

Indeed, clearly $\Gamma(1)=1$, and it is easy to prove (b) for $\Gamma$ by using integration by parts. In order to show that $\log \Gamma$ is convex, pick any $x,y>0$ and $\lambda\in(0,1)$ and apply H\"{o}lder's inequality~\eqref{eq:Holder} with $p=1/\lambda$ to the functions $t\mapsto e^{-\lambda t}t^{\lambda(x-1)}$ and $t\mapsto e^{-(1-\lambda)t}t^{(1-\lambda)(y-1)}$. For the converse, let $g:=\log f$. Then (a) and (b) imply
$g(n+1+x)=\log\left[x(1+x)\ldots (n+x) f(x)\right]$ and thus $g(n+1)=\log(n!)$. Convexity of $g$ together with the three-slope inequality, see Fact~\ref{basic-prin}\ref{three-slope}, implies that
$$g(n+1)-g(n)\leq\frac{g(n+1+x)-g(n+1)}{x}\leq g(n+2+x)-g(n+1+x),$$
and hence,
$$x\log(n)\leq \log\left(x(x+1)\cdots(x+n)f(x)\right)-\log(n!)\leq x\log(n+1+x);$$
whence,
$$0\leq g(x)-\log\left(\frac{n!\,n^x}{x(x+1)\cdots(x+n)}\right)\leq x\log\left(1+\frac{1+x}{n}\right).$$
Taking limits when $n\to\infty$ we obtain
$$f(x)=\lim_{n\to\infty}\frac{n!\,n^x}{x(x+1)\cdots(x+n)}=\Gamma(x).$$
As a bonus we recover a classical and important limit formula for $\Gamma(x)$.

  Application of the Bohr--Mollerup theorem is often \emph{automatable}
in a computer algebra system, as we now  illustrate.
 Consider the \emph{beta function}
\begin{eqnarray}
\beta(x,y) &:=& \int_0^1 t^{x-1}(1-t)^{y-1}\di t \label{beta-def}
\end{eqnarray}
\noindent for $\mathrm{Re}(x), \mathrm{Re}(y) > 0$. As is often
established using polar coordinates and double integrals
\begin{eqnarray}
\beta(x,y) &=& \frac{\Gamma(x)\,\Gamma(y)}{\Gamma(x+y)}.
\label{eq:bint}
\end{eqnarray}
We may use the Bohr--Mollerup theorem with
$$
f:=x \to \beta(x,y)\,\Gamma(x+y)/\Gamma(y)
$$
to prove (\ref{eq:bint}) for real $x, y$.

Now (a) and (b) from
Example~\ref{ex:bohr} are easy to verify. For (c) we again use H\"older's inequality
to show $f$ is log-convex. Thus, $f=\Gamma$ as required.
\qede
\end{example}

\begin{example}[Blaschke--Santal\'o theorem]
\label{ex:Blaschke-theorem}
The volume of a unit ball in the $\|\cdot\|_p$-norm,
$V_n(p)$ is
\begin{eqnarray}
V_n(p) \ &=& \ 2^n \frac{\Gamma(1+\frac{1}{p})^n}{\Gamma(1+\frac{n}{p})}. \label{volp}
\end{eqnarray}
as was first determined by Dirichlet. When $p=2$, this gives
\begin{eqnarray*}
V_n &=& 2^n \frac{\Gamma(\frac 32)^n }{\Gamma(1+\frac n2)} \; = \;
\frac{\Gamma(\frac 12)^n}{\Gamma(1+\frac n2)},
\end{eqnarray*}
which is more concise than that usually recorded in texts.

 Let $C$ in $\RR^n$  be  a {\em convex body} which is symmetric around zero,
that is,  a closed bounded convex set with nonempty interior.
Denoting $n$-dimensional Euclidean volume of $S\subseteq \RR^n$ by
$V_n(S)$, the {\em Blaschke--Santal\'o} inequality says
\begin{equation} \label{eq:BS}
V_n(C)\,V_n(C^\circ) \ \leq \ V_n(E)\,V_n(E^\circ) \ =\
V_n^2(B_n(2))
\end{equation}
where maximality holds (only)  for  \emph{any symmetric} ellipsoid $E$ and
$B_n(2)$ is the Euclidean  unit ball. It is \emph{conjectured}  the minimum
is attained by the 1-ball and the $\infty$-ball. Here as always the
polar set is defined by $C^\circ \ := \ \{y\in \RR^n \colon \langle
y,x\rangle \le 1 \mbox{ for all } x\in C\}.$

The $p$-ball case of (\ref{eq:BS}) follows by proving the
following convexity result:

\begin{theorem}[Harmonic-arithmetic log-concavity]  \emph{The function
\[V_\alpha(p):=2^\alpha {\Gamma\left(1+\frac{1}{p}\right)^\alpha/
\Gamma\left(1+\frac{\alpha}{p}\right)}\] satisfies
\begin{equation} \label{eqV}
V_\alpha(p)^\lambda\,V_\alpha(q)^{1-\lambda}\ < \
V_\alpha\left(\frac{1}{\frac{\lambda}{p}+\frac{1-\lambda}{q}}\right),
\end{equation}
 for all $\alpha>1$,
if $p,q>1$, $p\ne q$, and $\lambda\in(0,1)$.}
\end{theorem}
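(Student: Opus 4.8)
The plan is to pass to the harmonic variable. Writing $u := 1/p$ and $v := 1/q$, the argument on the right-hand side of \eqref{eqV} becomes the ordinary convex combination $\lambda u + (1-\lambda)v$, so it is natural to introduce
$$W_\alpha(u) := V_\alpha(1/u) = 2^\alpha\,\frac{\Gamma(1+u)^\alpha}{\Gamma(1+\alpha u)}, \qquad u \in (0,1).$$
In these coordinates \eqref{eqV} reads $W_\alpha(u)^\lambda W_\alpha(v)^{1-\lambda} < W_\alpha(\lambda u + (1-\lambda)v)$, which is precisely the assertion that $u \mapsto \log W_\alpha(u)$ is \emph{strictly concave} on $(0,1)$. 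Thus the whole statement reduces to a second-derivative computation, and the strict inequality for $p\neq q$ will come for free from strict concavity.

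Next I would differentiate. Since $\log W_\alpha(u) = \alpha \log 2 + \alpha \log\Gamma(1+u) - \log\Gamma(1+\alpha u)$, writing $\psi := (\log\Gamma)'$ for the digamma function and $\psi'$ for its derivative (the trigamma function) gives
$$(\log W_\alpha)''(u) = \alpha\,\psi'(1+u) - \alpha^2\,\psi'(1+\alpha u),$$
so strict concavity is equivalent to the trigamma inequality $\psi'(1+u) < \alpha\,\psi'(1+\alpha u)$ for every $u \in (0,1)$ and $\alpha > 1$. Setting $a := 1+u$ and $b := 1 + \alpha u$, one has $1 < a < 2$, $b > a$ (because $\alpha > 1$), and $\alpha = (b-1)/(a-1)$; multiplying the desired inequality by $a - 1 > 0$ turns it into $(a-1)\,\psi'(a) < (b-1)\,\psi'(b)$. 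Hence it suffices to prove that
$$H(x) := (x-1)\,\psi'(x) \text{ is strictly increasing on } (1,\infty).$$

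To establish this monotonicity I would use the classical integral representation $\psi'(x) = \int_0^\infty \frac{t\,e^{-(x-1)t}}{e^t-1}\,dt$. With $c := x - 1 > 0$ and the substitution $s = ct$, this yields
$$H(x) = \int_0^\infty \frac{s\,e^{-s}}{c\,(e^{s/c}-1)}\,ds.$$
For fixed $s > 0$, writing $c\,(e^{s/c}-1) = s\cdot\frac{e^{w}-1}{w}$ with $w := s/c$ and using that the elementary function $w \mapsto (e^w-1)/w$ is strictly increasing on $(0,\infty)$ while $w = s/c$ decreases as $c$ grows, the denominator is strictly decreasing in $c$, so the integrand is strictly increasing in $c$ for each $s > 0$. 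Since the weight $s\,e^{-s}$ is positive, $H$ is strictly increasing in $c$, hence in $x$, which is exactly what we needed. Chaining the reductions backwards delivers the strict concavity of $\log W_\alpha$ and therefore \eqref{eqV}.

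I expect the main obstacle to be the trigamma inequality itself. The naive series $\psi'(x) = \sum_{n\ge 0}(x+n)^{-2}$ gives $H'(x) = \sum_{n\ge 0}\frac{(n+2)-x}{(x+n)^3}$, whose summands are all positive for $x\in(1,2)$ but change sign once $x > 2$; since $b = 1+\alpha u$ may be arbitrarily large, termwise positivity is not available and a more structural argument—such as the integral representation together with the monotonicity of $(e^w-1)/w$ used above—is needed. Everything else (the substitution $u = 1/p$ and the passage from concavity to Jensen's inequality) is routine.
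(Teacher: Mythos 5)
Your argument is correct and complete. Note that the paper itself offers no proof of this theorem: it is stated inside Example~\ref{ex:Blaschke-theorem} and the reader is simply referred to \cite[\S5.5]{BorBai08}, so there is no in-text argument to compare against; what you have supplied is a self-contained proof in the spirit of that reference. Your chain of reductions is sound: the substitution $u=1/p$ converts the harmonic mean into an arithmetic one, so \eqref{eqV} is exactly strict log-concavity of $W_\alpha(u)=2^\alpha\Gamma(1+u)^\alpha/\Gamma(1+\alpha u)$ on $(0,1)$; the second-derivative computation correctly reduces this to the trigamma inequality $\psi'(1+u)<\alpha\,\psi'(1+\alpha u)$; and the substitution $a=1+u$, $b=1+\alpha u$ (with $1<a<b$ precisely because $\alpha>1$ and $u>0$) converts that into the strict monotonicity of $H(x)=(x-1)\psi'(x)$ on $(1,\infty)$. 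That last step is the only nontrivial one and it checks out: from $\psi'(x)=\int_0^\infty t\,e^{-(x-1)t}(e^t-1)^{-1}\,dt$ and $s=(x-1)t$ one gets $H(x)=\int_0^\infty s\,e^{-s}\bigl(c(e^{s/c}-1)\bigr)^{-1}ds$ with $c=x-1$, and since $c(e^{s/c}-1)=s\,\frac{e^w-1}{w}$ with $w=s/c$, while $(e^w-1)/w=\sum_{k\geq 0}w^k/(k+1)!$ is strictly increasing in $w>0$, the integrand is strictly increasing in $c$ pointwise, whence so is $H$. Your closing observation that the termwise series attack on $H'(x)=\sum_{n\geq0}(n+2-x)(x+n)^{-3}$ fails once $x>2$ is accurate and correctly identifies why the integral representation is genuinely needed; the only cosmetic gap is that you should state explicitly that strict negativity of $(\log W_\alpha)''$ on the open interval yields the strict Jensen inequality for $u\neq v$, which is standard and clearly intended.
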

Set $\alpha:=n$, $\frac1p+\frac1q=1$  with $\lambda=1-\lambda
=1/2$ to recover the $p-$norm case of the Blaschke--Santal\'o
inequality. It is amusing to deduce the corresponding lower bound.
This technique extends to  various substitution norms. Further details may be found in~\cite[\S5.5]{BorBai08}. Note that we may easily explore $V_\alpha(p)$ graphically. \qede
\end{example}

\subsection{The biconjugate and duality}

The next result has been associated by different authors with the names of Legendre, Fenchel, Moreau and H\"{o}rmander; see, e.g., \cite[Proposition 4.4.2]{BorVan}.

\begin{proposition}\rm(\textbf{H\"{o}rmander}\footnote{H\"{o}rmander first proved the case of support and indicator functions in \cite{H55} which led to discovery of general result.})\emph{(See
\cite[Theorem~2.3.3]{Zalinescu} or \cite[Proposition~4.4.2(a)]{BorVan}.)}
Let  $f: X \to \RX$ be a proper function. Then
$$f\text{ is convex and lower semicontinuous }\iff f=f^{**}|_X.$$
\end{proposition}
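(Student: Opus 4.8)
The plan is to prove the two implications separately, the substantive content being a Hahn--Banach separation in $X\times\RR$. For the easy direction, suppose $f=f^{**}|_X$. Since $f^{**}$ is itself a Fenchel conjugate, namely the conjugate of $f^*$, it is convex and lower semicontinuous by the construction recorded just after Example~\ref{ex:conj_norm} (which rests on Fact~\ref{basic-prin}\ref{basic-prin:E1}). Passing to the canonical image of $X$ in $X^{**}$ preserves both properties relative to the norm topology of $X$, so $f$ inherits them. For the converse I assume $f$ proper, convex and lsc, and note that one inequality is free: Fenchel--Young (Proposition~\ref{prop:Fenchel-Young-fd}) gives $\scal{x^*}{x}-f^*(x^*)\le f(x)$ for every $x^*\in X^*$, and taking the supremum over $x^*$ yields $f^{**}|_X\le f$, as already observed. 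It remains to establish $f\le f^{**}|_X$.

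To that end I would fix $x_0\in X$ and an arbitrary real $r<f(x_0)$, so that $(x_0,r)\notin\epi f$. Properness makes $\epi f$ nonempty, convexity makes it convex, and lower semicontinuity makes it closed; hence $\epi f$ is a nonempty closed convex subset of $X\times\RR$ missing the point $(x_0,r)$. Applying the Hahn--Banach separation theorem produces $(x^*,s)\in X^*\times\RR$ and $\alpha\in\RR$ with
$$\scal{x^*}{x}+st\ge\alpha>\scal{x^*}{x_0}+sr\quad\text{for all }(x,t)\in\epi f.$$
Letting $t\to+\infty$ in the left-hand inequality forces $s\ge0$, and the remaining work is the analysis of the two cases $s>0$ and $s=0$.

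If $s>0$, normalising to $s=1$ gives $\scal{-x^*}{x}-f(x)\le-\alpha$ for all $x$, that is $f^*(-x^*)\le-\alpha$; hence
$$f^{**}(x_0)\ge\scal{-x^*}{x_0}-f^*(-x^*)\ge\alpha-\scal{x^*}{x_0}>r.$$
The case $s=0$ is the main obstacle, and is exactly where lower semicontinuity is indispensable. Here the separator is ``vertical'', $\scal{x^*}{x}\ge\alpha>\scal{x^*}{x_0}$ for all $x\in\dom f$, which on its own says nothing about $f^{**}$. To convert it I would invoke Fact~\ref{basic-prin}\ref{basic-prin:E5}: a proper lsc convex function is minorised by a continuous affine function, so $\dom f^*\ne\emp$. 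Choosing $\ell^*\in\dom f^*$ and estimating $f^*(\ell^*-\gamma x^*)\le f^*(\ell^*)-\gamma\alpha$ for $\gamma>0$ yields
$$f^{**}(x_0)\ge\scal{\ell^*}{x_0}-f^*(\ell^*)+\gamma\big(\alpha-\scal{x^*}{x_0}\big),$$
which tends to $+\infty$ as $\gamma\to+\infty$ since $\alpha-\scal{x^*}{x_0}>0$.

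In either case $f^{**}(x_0)\ge r$, and since $r<f(x_0)$ was arbitrary we conclude $f^{**}(x_0)\ge f(x_0)$, which together with the free inequality gives equality on $X$. I expect the delicate point to be precisely the vertical case $s=0$: it can occur only when $x_0\notin\dom f$, and it is the one step that genuinely uses lower semicontinuity (through the affine minorant of Fact~\ref{basic-prin}\ref{basic-prin:E5}), whereas convexity and properness enter only to guarantee that $\epi f$ is convex, closed and nonempty.
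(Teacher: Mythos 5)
Your argument is correct, and since the paper gives no proof of this proposition---it only cites Z\u{a}linescu and Borwein--Vanderwerff---there is nothing in the text to diverge from; your separation argument in $X\times\RR$, with the normalisation $s=1$ in the non-vertical case and the use of a nonempty $\dom f^*$ to dispose of the vertical case $s=0$, is precisely the classical Fenchel--Moreau proof found in those references. The one point worth flagging is that both cases tacitly use that $f^*$ never takes the value $-\infty$ (so that the Fenchel--Young lower bound for $f^{**}(x_0)$ is meaningful), which holds here because $f$ is proper, as the paper notes after Example~\ref{ex:conj_norm}.
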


\begin{example}[Establishing convexity]{(See
\cite[Theorem~1]{BorIN}.)}
We may compute conjugates by hand or using the software \emph{SCAT} \cite{BorHam}. This is discussed further in Section \ref{ssec:CAS}. Consider $f(x):=e^x$. Then $f^*(x)=x\log(x)-x$ for $x \ge 0$ (taken to be zero at zero) and is infinite for $x<0$. This establishes the convexity of $x\log(x)-x$ in a way that takes no knowledge of $x\log(x)$.

A more challenging case is the following (slightly corrected) conjugation formula \cite[p. 94, Ex.~13]{BorLew} which can be computed algorithmically:
Given real $\alpha_1,\alpha_2, \ldots ,\alpha_m > 0$, define $\alpha := \sum_i
\alpha_i$ and suppose a real $\mu$ satisfies $\mu > \alpha + 1$.  Now define a
function
$f:\RR^m \times \RR \mapsto \RX$ by
\begin{align*}
f(x,s) :=\begin{cases}
\mu^{-1}s^\mu \prod_i x_i^{-\alpha_i}  &  \mbox{if}~x \in \RR^m_{++},~s \in \RR_+; \\ \\
0 &  \mbox{if}~\exists x_i=0,\, x\in\RR^m_+, ~s=0; \\
+\infty                                &  \mbox{otherwise}.
\end{cases},\quad\forall x:=(x_n)^m_{n=1}\in\RR^m,\, s\in\RR.
\end{align*}
It transpires that
\begin{align*}
f^*(y,t) = \begin{cases}
\rho \nu^{-1} t^\nu \prod_i (-y_i)^{-\beta_i}
                               &  \mbox{if}~y \in \RR^m_{--},~t \in \RR_+  \\
 0              &  \mbox{if}~y \in \RR^m_{-},~t  \in \RR_- \\
+\infty                  &  \mbox{otherwise}
\end{cases},\quad\forall y:=(y_n)^m_{n=1}\in\RR^m,\, t\in\RR.
\end{align*}
for constants
\[
\nu := \frac{\mu}{\mu - (\alpha + 1)},~~
\beta_i := \frac{\alpha_i}{\mu - (\alpha + 1)},~~
\rho := \prod_i \Big( \frac{\alpha_i}{\mu} \Big)^{\beta_i}.
\]
We deduce  that $f=f^{**}$, whence $f$ (and $f^*$) is (essentially strictly) convex.
For attractive alternative proof of convexity see \cite{Mar}. Many other substantive examples are to be found in \cite{BorLew,BorVan}.\qede
\end{example}

The next theorem gives us a remarkable sufficient condition for convexity of functions in terms of the G\^ateaux differentiability of the conjugate. There is a simpler analogue for the Fr\'echet derivative.

\begin{theorem}\emph{(See {\cite[Corollary 4.5.2]{BorVan}}.)}\label{th:convex_conj_suff}
Suppose $f:X \to \RX$ is such that $f^{**}$ is proper. If $f^*$ is G\^ateaux differentiable at all $x^* \in \dom\partial f^*$ and
$f$ is sequentially weakly lower semicontinuous, then $f$ is convex.
\end{theorem}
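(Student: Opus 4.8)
Write $g:=f^{**}|_X$. Since $f^{**}$ is proper by hypothesis, $f^*$ is proper, convex and lower semicontinuous, and $g$ is a proper, (norm) lower semicontinuous, convex function on $X$ with $g\le f$ and $g^*=f^*$. Because $g$ is convex and lower semicontinuous, H\"ormander's theorem tells us that proving $f$ convex amounts to proving $f=g$ on $X$; as $g\le f$ is automatic, the whole task reduces to the reverse inequality $f\le g$. Where $g(x_0)=+\infty$ this is trivial (then $f(x_0)=+\infty$ too), so the plan is to establish $f(x_0)\le g(x_0)$ first for $x_0\in\dom\partial g$ and then to propagate it to all of $\dom g$.

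\textbf{Reading off the derivative.} Fix $x_0\in\dom\partial g$ and pick $x^*\in\partial g(x_0)$. Applying the Fenchel--Young equality (Proposition~\ref{prop:Fenchel-Young-fd}) to the pair $g,g^*=f^*$ shows that $x^*\in\partial g(x_0)$ is equivalent to $x_0\in\partial f^*(x^*)$ and to $g(x_0)+f^*(x^*)=\scal{x^*}{x_0}$. In particular $x^*\in\dom\partial f^*$, so $f^*$ is G\^ateaux differentiable at $x^*$ by hypothesis. Any $v\in\partial f^*(x^*)$ satisfies $\scal{v}{h}\le (f^*)'(x^*;h)$ for all $h\in X^*$, and testing with $\pm h$ forces $v=(f^*)'(x^*)$; together with Proposition~\ref{GatPC}(i) this gives $\partial f^*(x^*)=\{(f^*)'(x^*)\}$. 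Hence $x_0=(f^*)'(x^*)$, and the G\^ateaux derivative of $f^*$ at $x^*$ is exactly $x_0\in X$.

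\textbf{The key weak-convergence step.} I would now show that every maximizing sequence $(y_n)$ for $f^*(x^*)=\sup_{y\in X}\{\scal{x^*}{y}-f(y)\}$ converges weakly to $x_0$. For each $h\in X^*$ and $t>0$, the definition of $f^*$ gives $f^*(x^*+th)\ge\big[\scal{x^*}{y_n}-f(y_n)\big]+t\,\scal{h}{y_n}$, so $t\,\scal{h}{y_n}\le f^*(x^*+th)-\big[\scal{x^*}{y_n}-f(y_n)\big]$. Taking $\limsup_n$ (using $\scal{x^*}{y_n}-f(y_n)\to f^*(x^*)$), dividing by $t$, and letting $t\to0^+$ yields $\limsup_n\scal{h}{y_n}\le (f^*)'(x^*;h)=\scal{h}{x_0}$, the last equality because the derivative is $x_0\in X$. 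Replacing $h$ by $-h$ gives the matching lower bound, so $\scal{h}{y_n}\to\scal{h}{x_0}$ for every $h\in X^*$, i.e.\ $y_n\weakly x_0$. Sequential weak lower semicontinuity of $f$ then gives $f(x_0)\le\liminf_n f(y_n)$; since $\scal{x^*}{y_n}\to\scal{x^*}{x_0}$ and $\scal{x^*}{y_n}-f(y_n)\to f^*(x^*)$, we get $f(y_n)\to\scal{x^*}{x_0}-f^*(x^*)=g(x_0)$, whence $f(x_0)\le g(x_0)$. As $g\le f$ always, in fact $f=g$ on $\dom\partial g$.

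\textbf{Density and conclusion.} To reach all of $\dom g$, I would invoke the Br\o ndsted--Rockafellar theorem for the proper lower semicontinuous convex function $g$: every $x_0\in\dom g$ is the limit of a sequence $(x_n)$ with $x_n\in\dom\partial g$, $x_n\to x_0$ in norm, and $g(x_n)\to g(x_0)$. Then $x_n\weakly x_0$ and $f(x_n)=g(x_n)$ by the previous step, so sequential weak lower semicontinuity gives $f(x_0)\le\liminf_n f(x_n)=\lim_n g(x_n)=g(x_0)$. Combined with the trivial case $x_0\notin\dom g$, this proves $f\le g$ on all of $X$, hence $f=g=f^{**}|_X$ is convex. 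The main obstacle is the weak-convergence step: it is exactly there that G\^ateaux differentiability of $f^*$ is converted into convergence of the maximizers of the problem defining $f^*(x^*)$, and this weak limit is precisely what the sequential weak lower semicontinuity hypothesis is designed to exploit. The only ingredient external to the excerpt is the Br\o ndsted--Rockafellar theorem, needed to secure value convergence in the final density argument.
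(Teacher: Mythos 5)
The paper does not prove this theorem at all --- it is quoted verbatim from \cite[Corollary~4.5.2]{BorVan} --- so there is no in-text argument to compare against; what you have written is an independent, essentially complete proof, and it is correct. The core of your argument (the \v{S}mulian-type step showing that G\^ateaux differentiability of $f^*$ at $x^*$ forces every maximizing sequence of $y\mapsto\scal{x^*}{y}-f(y)$ to converge weakly to the derivative $x_0$, after which sequential weak lower semicontinuity yields $f(x_0)\le f^{**}(x_0)$) is exactly the mechanism that makes the cited result work, and your identification $\partial g(x_0)\ni x^*\iff x_0\in\partial f^*(x^*)$ via $g^*=f^*$ correctly converts the hypothesis ``differentiable on $\dom\partial f^*$'' into coverage of all of $\dom\partial g$. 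The only step that deserves a word of care is the final density argument: the bare Br\o ndsted--Rockafellar theorem gives norm-density of $\dom\partial g$ in $\dom g$, but you also need the accompanying value convergence $g(x_n)\to g(x_0)$. That strengthened form is true and standard (it follows from the two-parameter version of Br\o ndsted--Rockafellar, or from Ekeland's principle, which in addition to $\|x_n-x_0\|\le\varepsilon_n$ delivers $g(x_n)\le g(x_0)+\langle x_n^*,x_n-x_0\rangle$ with $x_n^*$ an $\varepsilon_n$-subgradient at $x_0$, and one balances the two tolerances so that $\|x_n^*\|\,\|x_n-x_0\|\to 0$; lower semicontinuity supplies the other inequality), but since it is the one external ingredient you lean on, you should cite it in that precise form rather than as ``the Br\o ndsted--Rockafellar theorem''. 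With that citation made precise, the proof is complete.
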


Let $f:X \to \RX$. We say $f$ is {\em coercive} if $\lim_{\|x\| \to
\infty} f(x) = +\infty$. We say $f$ is {\em supercoercive} if $\lim_{\|x\| \to
\infty} \frac{f(x)}{\|x\|} = +\infty$.

\begin{fact}\emph{(See \cite[Fact 4.4.8]{BorVan}.)}\label{CorEquiv}
If $f$ is proper
convex and lower semicontinuous at some point in
its domain, then the following statements are equivalent.
\begin{enumerate}
\item $f$ is coercive.
\item There exist $\alpha > 0$ and $\beta \in \RR$ such that
$f \ge \alpha\|\cdot\| + \beta$.

\item $\liminf_{\|x\| \to \infty} f(x)/\|x\| > 0$.

\item $f$ has bounded lower level sets.
\end{enumerate}
\end{fact}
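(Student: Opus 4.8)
The plan is to establish the four equivalences through a single cycle, isolating convexity for the one implication that genuinely needs it. First I would record that (i) $\iff$ (iv) is a purely topological fact requiring no convexity: if $f$ is coercive then for each $\alpha$ the sublevel set $\{f\le\alpha\}$ is contained in some ball (otherwise a sequence with $\|x_n\|\to\infty$ and $f(x_n)\le\alpha$ would contradict $f(x)\to+\infty$), and conversely if all sublevel sets are bounded then failure of coercivity would produce exactly such an unbounded sublevel set. Likewise (ii) $\Rightarrow$ (iii) is immediate on dividing by $\|x\|$ and letting $\|x\|\to\infty$, and (iii) $\Rightarrow$ (i) is immediate since a positive liminf $\ell$ forces $f(x)\ge\tfrac{\ell}{2}\|x\|\to+\infty$ for $\|x\|$ large. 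Thus the entire statement reduces to the single implication (iv) $\Rightarrow$ (ii).

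For (iv) $\Rightarrow$ (ii) I would use convexity radially. Fix $x_0\in\dom f$ and let $L:=\{f\le f(x_0)+1\}$; by (iv) this set is bounded, so $L\subseteq\{z:\|z-x_0\|<\rho\}$ for some $\rho>0$, and in particular $f(z)>f(x_0)+1$ whenever $\|z-x_0\|\ge\rho$. Now for any $x$ with $\|x-x_0\|\ge\rho$, set $t:=\rho/\|x-x_0\|\in(0,1]$ and $y:=x_0+t(x-x_0)$, a point with $\|y-x_0\|=\rho$. Convexity gives $f(x_0)+1<f(y)\le(1-t)f(x_0)+tf(x)$, and solving for $f(x)$ yields the crucial estimate $f(x)>f(x_0)+\tfrac{1}{\rho}\|x-x_0\|\ge\tfrac{1}{\rho}\|x\|+\beta_1$ for the constant $\beta_1:=f(x_0)-\|x_0\|/\rho$. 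This is the heart of the argument: it converts coercivity (a qualitative escape to $+\infty$) into a quantitative linear minorant, with the rate $1/\rho$ uniform over all directions because the same sphere of radius $\rho$ is crossed along every ray emanating from $x_0$.

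It remains to cover the bounded core $\{\|x-x_0\|<\rho\}$. There I would invoke Fact~\ref{basic-prin}\ref{basic-prin:E5}: $f$ admits a continuous affine minorant $\langle x^*,\cdot\rangle+b$, which is bounded below by some constant $m_0$ on this ball. Choosing $\alpha:=1/\rho>0$ and $\beta:=\min\{\beta_1,\,m_0-\alpha(\|x_0\|+\rho)\}$ then gives $f\ge\alpha\|\cdot\|+\beta$ everywhere, which is exactly (ii), closing the cycle (iv) $\Rightarrow$ (ii) $\Rightarrow$ (iii) $\Rightarrow$ (i) $\Rightarrow$ (iv).

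The main obstacle I anticipate is the radial estimate in the second paragraph, and specifically guaranteeing that the crossing point $y$ lies strictly outside the sublevel set so that $f(y)>f(x_0)+1$; this is why I would enlarge the radius (so that $L$ sits inside the open ball of radius $\rho$) rather than work on the boundary sphere, where a point of $L$ might lie. A secondary subtlety is that the hypothesis here assumes lower semicontinuity only at a single point of $\dom f$, so I would want to confirm that Fact~\ref{basic-prin}\ref{basic-prin:E5} (or at least boundedness below on bounded sets) remains available under this weaker assumption; if it is not immediate, the required affine minorant can instead be extracted from a supporting functional at that point of semicontinuity.
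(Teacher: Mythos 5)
Your argument is correct. Note that the paper itself offers no proof of this statement---it is quoted verbatim as Fact~4.4.8 of \cite{BorVan}---so there is no in-text argument to compare against; your write-up is a sound, self-contained derivation. The easy implications and the reduction of everything to (iv) $\Rightarrow$ (ii) are handled correctly, and the radial estimate is indeed the one place convexity genuinely enters (witness $x\mapsto\log(1+\|x\|)$, which satisfies (i), (iv) but not (ii), (iii) absent convexity). Your precaution of choosing $\rho$ so that the level set sits in the \emph{open} ball, which guarantees $f(y)>f(x_0)+1$ on the sphere $\|y-x_0\|=\rho$, is exactly right, and the case $x\notin\dom f$ is vacuous, so the convexity inequality is only ever applied to points of the domain. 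The one item to firm up is the subtlety you already flag: Fact~\ref{basic-prin}\ref{basic-prin:E5} is stated for globally lower semicontinuous $f$, whereas here $f$ is assumed lsc only at a single point $\bar x\in\dom f$. The affine minorant does survive under this weaker hypothesis, but the fix is not quite a ``supporting functional at $\bar x$'' --- the subdifferential $\partial f(\bar x)$ may be empty there, as for $-\sqrt{\cdot}$ at the origin. Rather, lower semicontinuity at $\bar x$ together with $f(\bar x)\in\RR$ ensures $(\bar x, f(\bar x)-1)\notin\overline{\epi f}$, and separating this point from the closed convex set $\overline{\epi f}$ yields a continuous affine function majorized by $f$; that affine minorant then bounds $f$ below on the ball $\|x-x_0\|<\rho$, which is all your final step requires.
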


Because a convex function is continuous at a point if and only if
it is bounded above on a neighborhood of that point (Fact~\ref{basic-prin}\ref{basic-prin:E4}),
we get the following result; see also~\cite[Theorem~7]{H55} for the case of
the indicator function of a bounded convex set.

\begin{theorem}[H\"{o}rmander--Moreau--Rockafellar] \label{Moreau-Rockafellar2}
Let $f:X \to \RX$ be convex and lower semicontinuous at some point in its domain, and let
$x^* \in X^*$. Then $f - x^*$ is coercive if and only if
$f^*$ is continuous at $x^*$.
\end{theorem}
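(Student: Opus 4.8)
The plan is to reduce both conditions to a common affine--norm estimate and to bridge them by conjugation together with the Fenchel--Young inequality. Write $g := f - x^*$, i.e.\ $g(x) = f(x) - \langle x^*,x\rangle$; then $g$ is again proper, convex, and lower semicontinuous at some point of its domain (subtracting a continuous linear functional preserves all three properties and the domain). A direct computation gives $g^*(y^*) = \sup_{x}\{\langle y^*+x^*,x\rangle - f(x)\} = f^*(y^*+x^*)$, so $g^*$ is merely a translate of $f^*$, and hence $f^*$ is continuous at $x^*$ if and only if $g^*$ is continuous at $0$. Since a conjugate is always convex and lower semicontinuous and, $\dom f$ being nonempty, never takes the value $-\infty$, Fact~\ref{basic-prin}\ref{basic-prin:E4} tells us that $g^*$ is continuous at $0$ precisely when it is bounded above on some ball $\alpha B_{X^*}$. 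Thus the theorem becomes the symmetric statement: $g$ is coercive if and only if $g^*$ is bounded above on a neighbourhood of $0$.

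For the forward implication I would invoke Fact~\ref{CorEquiv}: coercivity of $g$ yields $\alpha>0$ and $\beta\in\RR$ with $g \ge \alpha\|\cdot\| + \beta$. Conjugating reverses the inequality ($f\ge h \Rightarrow f^*\le h^*$), so $g^* \le (\alpha\|\cdot\|+\beta)^*$. The key computation is $(\alpha\|\cdot\|)^* = \iota_{\alpha B_{X^*}}$: the bound $\langle y^*,x\rangle \le \|y^*\|_*\,\|x\| \le \alpha\|x\|$ for $\|y^*\|_*\le\alpha$ forces the supremum defining the conjugate to be $0$, while scaling $x$ along a near-norming direction drives it to $+\infty$ when $\|y^*\|_*>\alpha$. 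Adding the constant shows $g^* \le -\beta$ on $\alpha B_{X^*}$, so $g^*$ is bounded above near $0$, hence continuous there.

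For the converse, continuity of $g^*$ at $0$ gives $\alpha > 0$ and $M \in \RR$ with $g^*(y^*) \le M$ whenever $\|y^*\|_* \le \alpha$. Feeding this into Fenchel--Young (Proposition~\ref{prop:Fenchel-Young-fd}) in the rearranged form $g(x) \ge \langle y^*,x\rangle - g^*(y^*) \ge \langle y^*,x\rangle - M$ and taking the supremum over $\|y^*\|_* \le \alpha$ yields $g(x) \ge \alpha\|x\| - M$, whence $g$ is coercive (this is condition (ii) of Fact~\ref{CorEquiv}). The one point that genuinely requires the Banach rather than merely locally convex setting is the identity $\sup_{\|y^*\|_*\le \alpha}\langle y^*,x\rangle = \alpha\|x\|$, which is exactly the Hahn--Banach norming of $x$; getting this and the sign conventions in the translation $g=f-x^*$ right is the only delicate part, as everything else is routine bookkeeping with conjugation.
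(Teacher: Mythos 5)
Your proof is correct and follows essentially the same route as the paper's: both directions rest on Fact~\ref{CorEquiv}, the conjugation of the affine--norm minorant (equivalently $(\alpha\|\cdot\|)^*=\iota_{\alpha B_{X^*}}$), and Fenchel--Young plus the norming identity $\sup_{\|y^*\|_*\le\alpha}\langle y^*,x\rangle=\alpha\|x\|$ for the converse. The only difference is cosmetic: you translate to $g=f-x^*$ and work at the origin, whereas the paper carries $x^*$ through the estimates directly.
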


\begin{proof}
``$\Rightarrow$":  By Fact~\ref{CorEquiv},  there exist $\alpha > 0$ and $\beta \in \RR$ such that
$f \ge x^*+\alpha\|\cdot\| + \beta$.  Then $f^*\leq-\beta+\iota_{\{x^*+\alpha B_{X^*}\}}$, from where $x^*+\alpha B_{X^*}\subseteq\dom f^*$. Therefore,  $f^*$ is continuous at $x^*$ by Fact~\ref{basic-prin}\ref{basic-prin:E4}.

``$\Leftarrow$":  By the assumption, there exists $\beta\in\RR$ and $\delta>0$ such that
\begin{align*}
f^*(x^*+z^*)\leq \beta,\quad\forall z^*\in\delta B_{X^*}.
\end{align*}
Thus, by Proposition~\ref{prop:Fenchel-Young-fd},
$$\langle x^*+z^*, y\rangle-f(y) \leq \beta,\quad\forall z^*\in\delta B_{X^*},\,\forall y\in X;$$
whence, taking the supremum with $z^*\in\delta B_{X^*}$,
$$\delta\|y\|-\beta\leq f(y)-\langle x^*, y\rangle,\quad\forall y\in X.$$
Then, by Fact~\ref{CorEquiv}, $f-x^*$ is coercive.
\end{proof}

\begin{example}\label{ex:Moreau-Rockafellar}
Given a set $C$ in $X$, recall that the negative polar cone of $C$
is the convex cone
$$C^-:=\{x^*\in X^*\mid \sup \langle x^*, C\rangle\leq0\}.$$
Suppose that $X$ is reflexive and let $K\subseteq X$ be a closed convex cone. Then $K^-$ is another nonempty
closed convex cone with $K^{--} :=(K^-)^-=K$. Moreover, the indicator function
of $K$ and $K^-$ are conjugate to each other. If we set
$f:=\iota_{K^-}$, the indicator function of the negative polar cone
of $K$, Theorem \ref{Moreau-Rockafellar2} applies to get that
\begin{center}
$x\in \inte K$ if and only if the set $\{x^*\in K^- \mid \langle
x^*,x\rangle\geq \alpha\}$ is bounded for any $\alpha\in\RR$.
\end{center}
Indeed, since $x\in \inte K=\inte \dom \iota^*_{K^-}$ if
and only if $\iota^*_{K^-}$ is continuous at $x$, from Theorem
\ref{Moreau-Rockafellar2} we have that this is true if and only if
the function $\iota_{K^-}-x$ is coercive. Now, Fact
\ref{CorEquiv} assures us that coerciveness is equivalent to
boundedness of the lower level sets, which implies the assertion. \qede
\end{example}

\begin{theorem}[Moreau--Rockafellar duality \cite{Mor64}] \label{Moreau}
Let $f:X \to (-\infty,+\infty]$ be a lower semicontinuous  convex function.
Then $f$ is continuous at $0$ if and only if $f^*$ has weak$^*$-compact lower level sets.
\end{theorem}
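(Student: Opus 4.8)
The plan is to reduce the weak$^*$-compactness condition to a norm-boundedness condition, and then to prove the two implications by short Fenchel--Young estimates, using Hörmander's biconjugation identity ($f=f^{**}$) and the coercivity dictionary of Fact~\ref{CorEquiv}. First I would record the reduction. Since $f^*$ is always convex and weak$^*$ lower semicontinuous (it is the supremum of the weak$^*$-continuous affine maps $x^*\mapsto\langle x^*,x\rangle-f(x)$, cf.\ Fact~\ref{basic-prin}\ref{basic-prin:E1}), every lower level set $L_\alpha:=\{x^*\in X^*\mid f^*(x^*)\le\alpha\}$ is weak$^*$-closed. By the Banach--Alaoglu theorem, a weak$^*$-closed set is weak$^*$-compact precisely when it is norm-bounded: weak$^*$-compact sets are weak$^*$-bounded, hence norm-bounded by the uniform boundedness principle, while a norm-bounded weak$^*$-closed set lies inside a weak$^*$-compact ball. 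Thus the statement becomes: $f$ is continuous at $0$ if and only if every $L_\alpha$ is norm-bounded.

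For the forward implication I would use that continuity of the proper convex $f$ at $0$ is equivalent, by Fact~\ref{basic-prin}\ref{basic-prin:E4}, to $f$ being bounded above on a neighborhood of $0$; fix $\delta>0$ and $M\in\RR$ with $f\le M$ on $\delta B_X$. For $x^*\in L_\alpha$, the Fenchel--Young inequality (Proposition~\ref{prop:Fenchel-Young-fd}) gives $\langle x^*,x\rangle\le f(x)+f^*(x^*)\le M+\alpha$ whenever $\|x\|\le\delta$; taking the supremum over such $x$ yields $\delta\|x^*\|_*\le M+\alpha$, so $L_\alpha$ is bounded.

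For the converse, boundedness of the lower level sets of $f^*$ lets me invoke Fact~\ref{CorEquiv} on $X^*$. Here $f^*$ is norm lower semicontinuous and proper: it is nowhere $-\infty$ because $\dom f\neq\varnothing$, and it is not identically $+\infty$ because $f$ admits a continuous affine minorant (Fact~\ref{basic-prin}\ref{basic-prin:E5}). Fact~\ref{CorEquiv}\,(iv)$\Rightarrow$(ii) then produces $\alpha>0$ and $\beta\in\RR$ with $f^*\ge\alpha\|\cdot\|_*+\beta$. Conjugating and using Hörmander's theorem ($f=f^{**}$), I would estimate, for $\|x\|\le\alpha$, $f(x)=\sup_{x^*}\{\langle x^*,x\rangle-f^*(x^*)\}\le\sup_{x^*}\{\|x^*\|_*(\|x\|-\alpha)-\beta\}\le-\beta$, so $f$ is bounded above on $\alpha B_X$ and hence continuous at $0$ by Fact~\ref{basic-prin}\ref{basic-prin:E4}.

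The only genuinely delicate point is the first step, the equivalence between weak$^*$-compactness and norm-boundedness of the level sets, since it is exactly where the weak$^*$ topology and Banach--Alaoglu enter; the two implications themselves are then one-line estimates. I would deliberately avoid the tempting shortcut of applying Theorem~\ref{Moreau-Rockafellar2} to $f^*$ on $X^*$ and reading off a statement about $f^{**}$: that would force a comparison between continuity of $f$ on $X$ and continuity of $f^{**}$ on the bidual $X^{**}$, whereas the direct conjugation above keeps everything on $X$ and $X^*$.
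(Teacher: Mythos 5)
Your proof is correct, but it takes a genuinely different route through the middle of the argument. The paper's proof is a one-line chain of equivalences: $f$ is continuous at $0$ iff $f^{**}$ is continuous at $0$ (a cited fact from \cite{BorVan}), iff $f^*$ is coercive (Theorem~\ref{Moreau-Rockafellar2} applied to $f^*$ on $X^*$), iff $f^*$ has bounded lower level sets (Fact~\ref{CorEquiv}), iff those sets are weak$^*$-compact (Banach--Alaoglu). You share the last link of that chain---your reduction of weak$^*$-compactness to norm-boundedness via weak$^*$-closedness of the level sets and Banach--Alaoglu is exactly the paper's final step---but you replace the bidual detour by two direct Fenchel--Young estimates. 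This buys something real: your forward direction is a two-line computation needing only that $f$ is bounded above near $0$, and your converse needs only H\"ormander's identity $f=f^{**}|_X$ for proper lsc convex $f$, rather than the stronger and less elementary transfer statement that continuity of $f$ at $0$ in $X$ is equivalent to continuity of $f^{**}$ at $0$ in $X^{**}$. The cost is that you in effect re-prove the content of Theorem~\ref{Moreau-Rockafellar2} on $X^*$ instead of quoting it, and you use Fact~\ref{CorEquiv} only in the direction (iv)$\Rightarrow$(ii). One small point worth making explicit in a final write-up: both directions tacitly require $f$ to be proper (so that $f^*$ never takes the value $-\infty$ and H\"ormander's theorem applies); this is harmless, since for improper $f$ both sides of the asserted equivalence fail, and the paper makes the same tacit assumption.
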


\begin{proof} Observe that $f$ is continuous at $0$ if and only if $f^{**}$ is continuous at
$0$ (\cite[Fact~4.4.4(b)]{BorVan})if and only if $f^*$ is coercive (Theorem~\ref{Moreau-Rockafellar2})
if and only if $f^*$ has bounded lower level sets (Fact~\ref{CorEquiv}) if and only
if $f^*$ has weak$^*$-compact lower level sets by
the Banach-Alaoglu theorem (see \cite[Theorem~3.15]{Rudin}). \end{proof}

\begin{theorem}[Conjugates of supercoercive functions] \label{f-superc-iff-f*-bndonbnd}
Suppose $f: X \to \RX$ is a
lower semicontinuous and proper convex function. Then

\begin{enumerate}
\item[(a)] $f$ is supercoercive if and only if $f^*$ is bounded  (above) on bounded sets.

\item[(b)] $f$ is bounded (above) on bounded sets if and only if $f^*$ is supercoercive.
\end{enumerate}
\end{theorem}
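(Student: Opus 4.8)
The plan is to prove both equivalences by directly estimating the supremum that defines the conjugate, relying on two elementary ingredients: the dual-norm identities $\sup_{\|x^*\|\le M}\langle x^*,x\rangle = M\|x\|$ and $\sup_{\|x\|\le M}\langle x^*,x\rangle = M\|x^*\|$, together with the Fenchel--Young inequality (Proposition~\ref{prop:Fenchel-Young-fd}) and the fact that a proper lsc convex function is minorized by a continuous affine function (Fact~\ref{basic-prin}\ref{basic-prin:E5}). The recurring device is to split the supremum in $f^*(x^*)=\sup_x\{\langle x^*,x\rangle - f(x)\}$ (or in $f^{**}$) into a \emph{large-norm} part, where (super)coercivity or boundedness forces the relevant quantity to be negative, and a \emph{bounded} part, where the affine minorant or Fenchel--Young supplies a uniform estimate.

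For (a), forward direction, I would assume $f$ supercoercive and fix $r>0$. Using supercoercivity with constant $r+1$, choose a radius $R$ with $f(x)\ge (r+1)\|x\|$ whenever $\|x\|\ge R$; then for $\|x^*\|\le r$ the quantity $\langle x^*,x\rangle - f(x)$ is $\le -\|x\|<0$ on $\{\|x\|\ge R\}$, while on $\{\|x\|<R\}$ the affine minorant $f\ge\langle a^*,\cdot\rangle+b$ gives $\langle x^*,x\rangle-f(x)\le (r+\|a^*\|)R - b$. Taking the supremum over $x$ yields a bound on $f^*(x^*)$ independent of $x^*\in rB_{X^*}$, so $f^*$ is bounded above on bounded sets. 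Conversely, if $f^*$ is bounded above on bounded sets, pick for each $M$ a constant $C_M$ with $f^*\le C_M$ on $MB_{X^*}$; Fenchel--Young gives $f(x)\ge\langle x^*,x\rangle - f^*(x^*)\ge \langle x^*,x\rangle - C_M$ for all $\|x^*\|\le M$, and taking the supremum over those $x^*$ produces $f(x)\ge M\|x\|-C_M$, whence $\liminf_{\|x\|\to\infty}f(x)/\|x\|\ge M$; since $M$ is arbitrary, $f$ is supercoercive.

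For (b) I would argue symmetrically. Forward: if $f$ is bounded above on bounded sets, set $C_M:=\sup_{\|x\|\le M}f(x)<\infty$; then $f^*(x^*)\ge\sup_{\|x\|\le M}\{\langle x^*,x\rangle - f(x)\}\ge M\|x^*\|-C_M$, so $\liminf_{\|x^*\|\to\infty}f^*(x^*)/\|x^*\|\ge M$ and $f^*$ is supercoercive. Conversely, supercoercivity of $f^*$ lets me repeat the large-norm/bounded-region split on $f=f^{**}$: choosing $R$ with $f^*(x^*)\ge(\rho+1)\|x^*\|$ for $\|x^*\|\ge R$ and using the affine minorant of $f^*$ (equivalently Fenchel--Young at a fixed $x_0\in\dom f$, giving $f^*(x^*)\ge\langle x^*,x_0\rangle-f(x_0)$) to bound $f^*$ from below on $RB_{X^*}$, I bound $f^{**}(x)=\sup_{x^*}\{\langle x^*,x\rangle - f^*(x^*)\}$ uniformly for $\|x\|\le\rho$; since $f=f^{**}|_X$, this shows $f$ is bounded above on bounded sets. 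Alternatively this last direction follows by applying part~(a) to the proper lsc convex function $f^*$ and then restricting $f^{**}$ back to $X$.

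I expect the main obstacle to be keeping the estimates over the bounded region uniform in the free variable: this is exactly where Fact~\ref{basic-prin}\ref{basic-prin:E5} (or Fenchel--Young at a fixed point of the domain) is indispensable, since without the affine minorant the contribution of small-norm points to the supremum could already be infinite. A secondary caution concerns the backward direction of (b): deducing it purely from (a) by duality only controls $f^{**}$ on bounded subsets of the bidual $X^{**}$, whereas the statement needs the weaker conclusion on $X$, so the identity $f^{**}|_X=f$ must be invoked explicitly; the direct split above avoids this subtlety entirely.
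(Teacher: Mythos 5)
Your proof is correct. For part (a) it is essentially the paper's argument in a different arrangement: the paper assembles the two regional estimates into a single global minorant $f\ge\alpha\|\cdot\|+(-\alpha M-\beta)$ and then conjugates, whereas you split the supremum defining $f^*$ into the large-norm and bounded regions directly; the reverse implication (Fenchel--Young plus the dual-norm identity) is identical in both. Where you genuinely diverge is part (b). The paper disposes of it in one line by applying (a) to $f^*$ and then citing an external fact that $f^{**}$ is bounded on bounded subsets of $X^{**}$ if and only if $f$ is bounded on bounded subsets of $X$. You instead prove the forward direction directly --- $f\le C_M$ on $MB_X$ gives $f^*(x^*)\ge M\|x^*\|-C_M$, hence supercoercivity of $f^*$ with no biconjugate needed --- and for the converse you redo the large-norm/bounded split on $f^{**}$ and invoke only $f^{**}|_X=f$ (H\"{o}rmander's theorem, valid since $f$ is proper, convex and lsc). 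Your route is slightly longer but more self-contained: it replaces the nontrivial imported fact (whose hard half, that boundedness of $f$ on bounded subsets of $X$ forces boundedness of $f^{**}$ on bounded subsets of $X^{**}$, you never actually need) by elementary estimates, and your closing remark correctly identifies why the naive ``apply (a) to $f^*$'' shortcut only controls $f^{**}$ on the bidual and must be supplemented by the biconjugate identity. One cosmetic point: in your bounded-region estimates you should note that $f^*$ is proper (it never takes $-\infty$ because $f$ is proper, and it is finite somewhere because $f$ has an affine minorant), so that (a) may legitimately be applied to $f^*$ in your alternative ending.
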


\begin{proof} (a) ``$\Rightarrow$'': Given any $\alpha > 0$,
there exists $M$ such that $f(x) \ge \alpha \|x\|$ if $\|x\| \ge M$. Now there exists
$\beta \ge 0$ such that $f(x) \ge -\beta$ if $\|x\| \le M$ by Fact~\ref{basic-prin}\ref{basic-prin:E5}. Therefore
$f \ge \alpha\|\cdot\| + (-\alpha M-\beta)$.
Thus, it implies that $f^*\leq \alpha(\|\cdot\|)^*(\frac{\cdot}{\alpha}) + \alpha M+\beta$
and hence
$f^* \le \alpha M+\beta$ on $\alpha B_{X^*}$.

``$\Leftarrow$'': Let $\gamma > 0$. Now there exists $K$ such that
$f^* \le K$ on $\gamma B_{X^*}$. Then $f \ge \gamma \|\cdot\| - K$ and so
$\liminf_{\|x\| \to \infty} \frac{f(x)}{\|x\|} \ge \gamma$.  Hence $\liminf_{\|x\| \to \infty} \frac{f(x)}{\|x\|}=+\infty$.

(b): According to (a), $f^*$ is supercoercive if and only if $f^{**}$
is bounded on bounded sets. By
\cite[Fact~4.4.4(a)]{BorVan} this  holds if and only if $f$ is bounded (above) on bounded sets. \end{proof}

\bigskip

We finish this subsection by recalling some properties of infimal convolutions. Some of their many applications include
smoothing techniques and approximation. We shall meet them again in Section \ref{SecMon:1}. Let $f,g:X\rightarrow\RX$.
Geometrically, the infimal convolution of $f$ and $g$ is the
largest extended real-valued function whose epigraph contains
the sum of epigraphs of $f$ and $g$ (see example in Figure~\ref{fig:convolution}), consequently it is a convex function.
The following is a useful result concerning the conjugate of the infimal convolution.

\begin{fact}\emph{(See \cite[Lemma 4.4.15]{BorVan} and~\cite[pp.~37-38]{Mnotes}.)} \label{lem:conjugate-convo} If $f$ and $g$ are proper functions on $X$, then $(f \Box g)^* = f^* + g^*$. Additionally, suppose $f,g$ are convex and bounded below. If  $f:X \to \RR$ is continuous (resp. bounded on bounded sets, Lipschitz), then $f \Box g$ is a convex function that is continuous (resp. bounded on bounded sets, Lipschitz).
\end{fact}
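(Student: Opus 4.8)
The plan is to prove the two assertions separately, as they have different characters. For the conjugate identity $(f \Box g)^* = f^* + g^*$, I would argue directly from definitions. Writing out the conjugate of the infimal convolution,
\begin{align*}
(f \Box g)^*(x^*) &= \sup_{x \in X}\Big\{\langle x^*,x\rangle - \inf_{y\in X}\big(f(y)+g(x-y)\big)\Big\}\\
&= \sup_{x\in X}\sup_{y\in X}\Big\{\langle x^*,x\rangle - f(y) - g(x-y)\Big\}.
\end{align*}
The key step is the substitution $z := x - y$, so that the double supremum over $(x,y)$ becomes an independent double supremum over $(y,z)$. Then the expression splits: $\langle x^*,x\rangle = \langle x^*,y\rangle + \langle x^*,z\rangle$, and the two suprema separate to yield $\sup_y\{\langle x^*,y\rangle - f(y)\} + \sup_z\{\langle x^*,z\rangle - g(z)\} = f^*(x^*) + g^*(x^*)$. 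The only care needed is handling the $\pm\infty$ bookkeeping, for which the stated conventions $(+\infty)+(-\infty)=+\infty$ and $(+\infty)-(+\infty)=+\infty$ guarantee the rearrangement is valid even when $f$ or $g$ takes the value $+\infty$; since $f,g$ are proper, $f^*,g^*$ never take $-\infty$, so the sum $f^*+g^*$ is well defined.

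For the regularity assertions, I would treat the three parallel cases (continuous, bounded on bounded sets, Lipschitz) by a common strategy: bound $f \Box g$ above using $f$ and below using the lower bounds on $f$ and $g$. Convexity of $f \Box g$ is already recorded in the preliminaries, so only the regularity propagation is at issue. For the upper bound, fixing any $y_0 \in \dom g$ gives $(f\Box g)(x) \le f(x-y_0) + g(y_0)$ pointwise, so $f\Box g$ inherits local boundedness above from $f$. For the lower bound, since $f$ and $g$ are bounded below, say by $c_f$ and $c_g$, we get $(f\Box g)(x) \ge c_f + c_g > -\infty$ everywhere; combined with properness this keeps $f\Box g$ finite and proper. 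The continuous case then follows because a convex function bounded above on a neighborhood of each point is continuous there, by Fact~\ref{basic-prin}\ref{basic-prin:E4}. The bounded-on-bounded-sets case follows from the same upper bound together with the uniform lower bound. For the Lipschitz case one estimates $(f\Box g)(x) - (f\Box g)(x')$ by comparing infima at a common shift, using a Lipschitz constant $M$ for $f$: from $(f\Box g)(x) \le f(x - x' + y) + g(y)$ and taking infimum over $y$ one gets $(f\Box g)(x) - (f\Box g)(x') \le M\|x-x'\|$, and symmetry yields the two-sided Lipschitz bound.

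The main obstacle, and the step deserving the most care, is the Lipschitz sub-case. The upper and lower bounds suffice cleanly for continuity and for boundedness on bounded sets, but transferring a global Lipschitz constant requires the translation-comparison argument above and a verification that the infima defining $f\Box g$ are attained or at least approximated consistently so that the inequality $(f\Box g)(x) \le (f\Box g)(x') + M\|x-x'\|$ genuinely holds. Concretely, for any $\varepsilon > 0$ pick $y$ nearly optimal for $(f\Box g)(x')$, i.e.\ $f(y) + g(x'-y) \le (f\Box g)(x') + \varepsilon$; then
$$(f\Box g)(x) \le f(y + x - x') + g(x'-y) \le f(y) + M\|x-x'\| + g(x'-y) \le (f\Box g)(x') + M\|x-x'\| + \varepsilon,$$
and letting $\varepsilon \downarrow 0$ and swapping the roles of $x,x'$ gives the Lipschitz estimate. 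I would then remark that this same comparison, specialized, recovers the continuous and bounded-on-bounded-sets cases, so all three assertions flow from one mechanism.
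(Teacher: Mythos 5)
Your argument is correct, and the paper itself offers no proof of this Fact --- it is quoted from \cite[Lemma~4.4.15]{BorVan} and Moreau's notes, where the standard proof is precisely the one you give: the conjugate identity by exchanging the supremum and infimum after the change of variables $z=x-y$ (with properness ensuring $f^*,g^*>-\infty$ so the sum is unambiguous), and the regularity transfer via the upper bound $(f\Box g)(x)\le f(x-y_0)+g(y_0)$, the lower bound $c_f+c_g$, and the $\varepsilon$-optimal translation comparison for the Lipschitz case. Your attention to the attainment issue in the Lipschitz step (resolved by the near-minimizer argument, which is legitimate since the lower bound keeps $f\Box g$ real-valued) is exactly the right care to take.
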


\begin{remark}
Suppose $C$ is a nonempty convex set. Then $\di_C = \|\cdot\|\Box \iota_C$, implying that $\di_C$ is
a  Lipschitz convex function.\qede
\end{remark}

\begin{example}
Consider $f,g:\RR\to \RX$ given by
$$f(x):=\left\{\begin{array}{ll}
-\sqrt{1-x^2},& \text{for } -1\leq x\leq 1,\\
+\infty& \text{otherwise,}
\end{array}\right. \quad and \quad g(x):=|x|.$$
The infimal convolution of $f$ and $g$ is
$$(f\Box g)(x)=\left\{\begin{array}{ll}
-\sqrt{1-x^2},& -\frac{\sqrt{2}}{2}\leq x\leq -\frac{\sqrt{2}}{2};\\
|x|-\sqrt{2}, & \text{otherwise.}
\end{array}\right.,$$
as shown in  Figure~\ref{fig:convolution}.\qede
\end{example}

\begin{figure}[ht!]
\centering
\includegraphics[width=.6\textwidth]{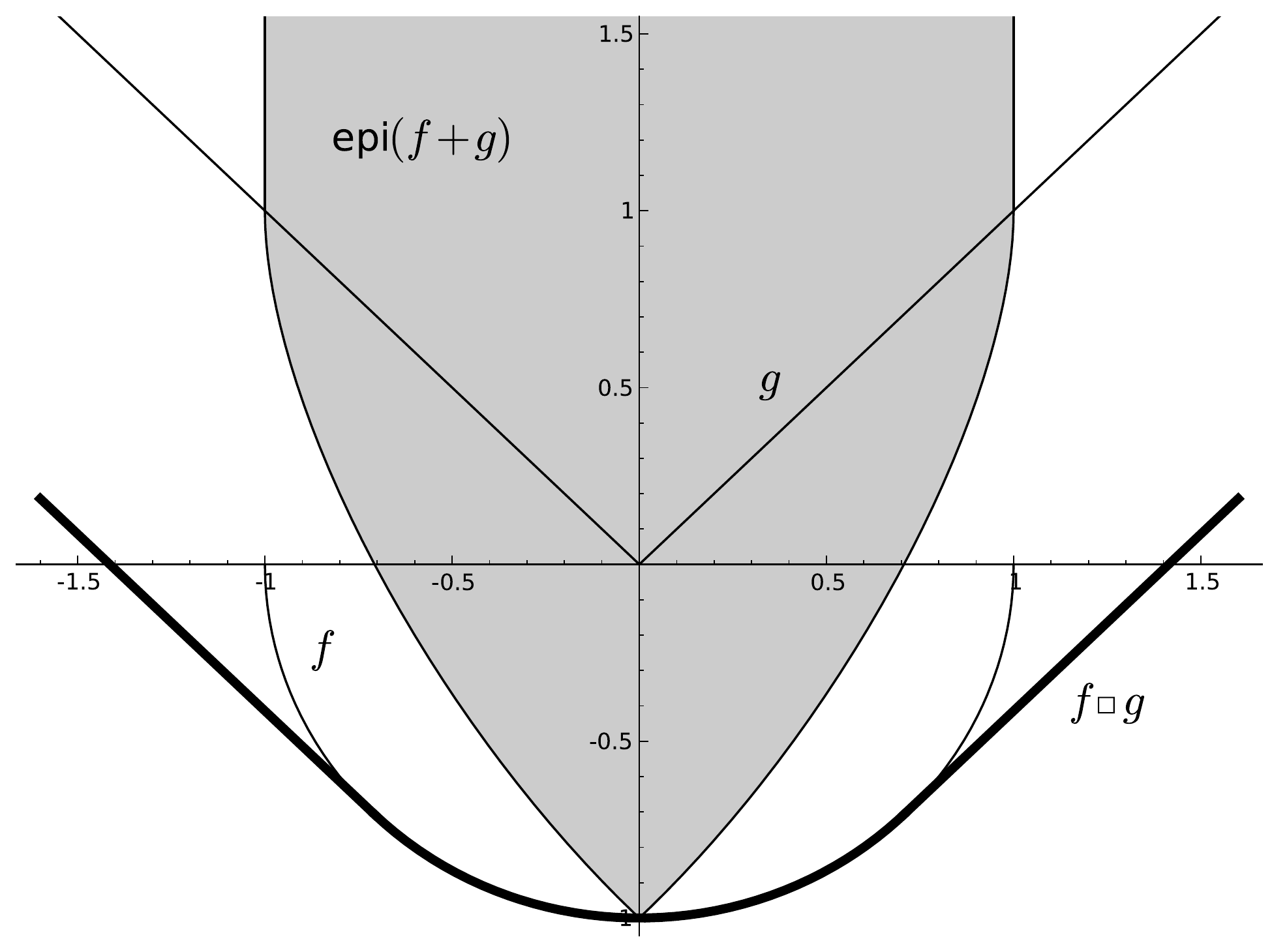}
\caption{Infimal convolution of $f(x)=-\sqrt{1-x^2}$ and $g(x)=|x|$.}\label{fig:convolution}
\end{figure}

\subsection{The Hahn-Banach circle}

Let $T:X \to Y$ be a linear mapping between two Banach spaces $X$ and $Y$. The {\em adjoint} of $T$
is the linear mapping $T^*:Y^*\to X^*$
defined, for $y^*\in Y^*$, by
$$\langle T^*y^*,x\rangle = \langle y^*, Tx\rangle\quad \text{for all } x \in X.$$
A flexible modern version of Fenchel's celebrated duality theorem is:

\begin{theorem}[Fenchel duality] \label{thm:Fenchel-duality-fd}
Let $Y$ be another Banach space,
let $f \colon X \rightarrow \RX$ and $g \colon Y \rightarrow \RX$ be convex functions
and let $T \colon X \rightarrow Y$ be a bounded linear
operator. Define the primal and dual
values $p,d \in [-\infty,+\infty]$
by solving the
{\rm Fenchel problems}
\begin{align}
p &:= \inf_{x \in X} \{f(x)+g(Tx)\} \nonumber
\\
d &:= \sup_{y^* \in Y^*} \{-f^{*}(T^{*}y^*) - g^{*}(-y^*)\}.
\label{eq:Fenchel_Dual_P}
\end{align}
Then these values satisfy the {\rm weak duality}
inequality $p \geq d$.

Suppose further that
$f$, $g$ and $T $satisfy either
\begin{equation}
\bigcup_{\lambda>0} \lambda\left[\dom g - T\dom f\right]
=Y\,\ \mbox{and both}\ f \mbox{ and } g \mbox{ are lower semicontinuous}, \label{eq:Fen-dual-1}
\end{equation}
or the condition
\begin{equation}
 \cont g \cap T\dom f \ne \varnothing. \label{eq:Fen-dual-2}
\end{equation}
Then $p=d$, and the supremum in the dual problem \eqref{eq:Fenchel_Dual_P}
is attained
when finite.  Moreover, the perturbation function $h(u):=\inf_x f(x)+g(Tx+u)$ is convex and continuous at zero.
\end{theorem}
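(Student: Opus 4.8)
The plan is to obtain weak duality directly from Fenchel--Young, and then to funnel strong duality, dual attainment, and the final continuity claim through the single perturbation function $h(u):=\inf_{x\in X}\{f(x)+g(Tx+u)\}$. For weak duality, fix $x\in\dom f$ with $Tx\in\dom g$ (otherwise the inequality is trivial) and $y^*\in Y^*$. Applying Proposition~\ref{prop:Fenchel-Young-fd} to $f$ with the functional $T^*y^*$ and to $g$ with $-y^*$ gives $f(x)+f^*(T^*y^*)\ge\langle T^*y^*,x\rangle=\langle y^*,Tx\rangle$ and $g(Tx)+g^*(-y^*)\ge\langle -y^*,Tx\rangle$; adding these, the terms $\pm\langle y^*,Tx\rangle$ cancel and I obtain $f(x)+g(Tx)\ge -f^*(T^*y^*)-g^*(-y^*)$. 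Taking the infimum over $x$ and the supremum over $y^*$ yields $p\ge d$.

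Next I would record the two key facts about $h$. Since $(x,u)\mapsto f(x)+g(Tx+u)$ is jointly convex---being the composition of the separately convex map $(a,b)\mapsto f(a)+g(b)$ with the bounded linear map $(x,u)\mapsto(x,Tx+u)$---its infimal projection $h$ is convex, and $h(0)=p$. A change of variable $v:=Tx+u$ in the definition of $h^*$ decouples the two suprema and gives $h^*(y^*)=f^*(-T^*y^*)+g^*(y^*)$; hence $d=\sup_{y^*}\{-f^*(T^*y^*)-g^*(-y^*)\}=\sup_{y^*}\{-h^*(-y^*)\}=h^{**}(0)$. Thus strong duality $p=d$ is precisely the identity $h(0)=h^{**}(0)$, and, by the equality case of Fenchel--Young, $-y^*\in\partial h(0)$ holds if and only if $y^*$ solves the dual problem. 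Everything therefore reduces to showing that $h$ is convex and continuous at $0$: continuity gives $\partial h(0)\ne\varnothing$ by Moreau's max formula (Theorem~\ref{thm:max-formula}), any subgradient $z^*$ then yields the continuous affine minorant $h(0)+\langle z^*,\cdot\rangle$ forcing $h^{**}(0)=h(0)$ and $h(0)$ finite, and the last sentence of the theorem is exactly this continuity statement.

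It remains to prove continuity of $h$ at $0$ under each hypothesis (the degenerate case $p=-\infty$ being trivial, as then $d=-\infty=p$). Under \eqref{eq:Fen-dual-2} I would choose $\bar x\in\dom f$ with $T\bar x\in\cont g$; then $h(u)\le f(\bar x)+g(T\bar x+u)$, and since $g$ is locally bounded above near $T\bar x$, the function $h$ is bounded above on a neighbourhood of $0$, hence continuous there by Fact~\ref{basic-prin}\ref{basic-prin:E4}.

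The hard part will be condition \eqref{eq:Fen-dual-1}. Since $\dom h=\dom g-T\dom f$, this hypothesis says exactly that $\dom h$ is absorbing, i.e. $0\in\core\dom h$. The obstacle is that $h$, being an infimal projection, need not be lower semicontinuous, so I cannot simply invoke the principle that for an lsc convex function core and interior of the domain coincide with continuity on the interior (Fact~\ref{basic-prin}\ref{basic-prin:E4}). I would instead run a Baire-category argument in the spirit of Attouch--Br\'ezis, using the lower semicontinuity of $f$ and $g$ and the completeness of $X$ and $Y$: since the convex set $\dom g-T\dom f$ contains $0$, the sets $n\,(\dom g-T\dom f)$ increase, so $Y=\bigcup_{n}\overline{n\,(\dom g-T\dom f)}$, and the Baire category theorem produces one of these closed sets with nonempty interior; a rescaling and re-centring argument (controlling the infimand via the lsc hypothesis to keep $h$ finite) then upgrades the algebraic absorbing property at $0$ to boundedness of $h$ above on a genuine neighbourhood of $0$. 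Continuity at $0$ follows as in the previous case, completing the proof.
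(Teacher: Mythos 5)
Your proposal is correct and follows essentially the same route the paper itself takes (in the remark following the Hahn--Banach extension theorem): weak duality via Fenchel--Young, and strong duality plus dual attainment funnelled through the perturbation function $h$, whose convexity and continuity at $0$ yield $\partial h(0)\neq\varnothing$ by Moreau's max formula, which is then equivalent to $p=d$ with dual attainment. The one step you leave as a sketch---the Baire-category (Attouch--Br\'ezis) argument showing that condition \eqref{eq:Fen-dual-1} forces $h$ to be bounded above near $0$ even though $h$, as an infimal projection, need not be lower semicontinuous---is precisely the step the paper also omits, deferring to the cited texts, and your diagnosis of why the naive ``core equals interior'' shortcut fails there is accurate.
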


Generalizations of Fenchel duality Theorem can be found in
\cite{BotWank1,BotGradWank}.
  An easy consequence is:

\begin{corollary}[Infimal convolution]\label{infc}  Under the hypotheses of the  Fenchel duality theorem \ref{thm:Fenchel-duality-fd}  $(f + g)^*(x^*)=(f^* \Box g^*)(x^*)$ with attainment when finite.\end{corollary}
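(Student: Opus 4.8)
The plan is to derive this from the Fenchel duality theorem by specializing to $Y:=X$ and $T:=\Id$, and then absorbing the point $x^*$ at which the conjugate is evaluated into a shift of $f$. Fix $x^*\in X^*$ and apply Theorem~\ref{thm:Fenchel-duality-fd} to the pair of convex functions $f-x^*$ and $g$ with the identity operator. The first thing I would verify is that the constraint qualification transfers: since subtracting the continuous linear functional $x^*$ alters neither $\dom f$, nor $\cont f$, nor the lower semicontinuity of $f$, either hypothesis \eqref{eq:Fen-dual-1} or \eqref{eq:Fen-dual-2} holds for the triple $(f-x^*,g,\Id)$ precisely when it holds for $(f,g,\Id)$. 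Hence the theorem applies and yields $p=d$, with attainment of the dual supremum when the common value is finite.

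Next I would identify the two values with the objects in the statement. On the primal side, directly from the definition of the Fenchel conjugate,
\[
p=\inf_{x\in X}\big\{(f-x^*)(x)+g(x)\big\}=\inf_{x\in X}\big\{f(x)+g(x)-\scal{x^*}{x}\big\}=-(f+g)^*(x^*).
\]
On the dual side I would first record the elementary shift formula $(f-x^*)^*(y^*)=f^*(y^*+x^*)$, obtained by writing out the supremum defining the conjugate. Since $T=\Id$ gives $T^*=\Id$, the dual value reads $d=\sup_{y^*\in X^*}\big\{-f^*(y^*+x^*)-g^*(-y^*)\big\}$.

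Then I would make the change of variable $w^*:=x^*+y^*$, a bijection of $X^*$ under which $-y^*=x^*-w^*$, so that
\[
d=\sup_{w^*\in X^*}\big\{-f^*(w^*)-g^*(x^*-w^*)\big\}=-\inf_{w^*\in X^*}\big\{f^*(w^*)+g^*(x^*-w^*)\big\}=-(f^*\Box g^*)(x^*).
\]
Combining with $p=d$ gives $(f+g)^*(x^*)=(f^*\Box g^*)(x^*)$, and attainment of the dual supremum (guaranteed by the theorem when the value is finite) is exactly attainment of the infimum defining the infimal convolution. As $x^*$ was arbitrary, the identity holds as an identity of functions on $X^*$.

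The computations are all routine once the reduction is set up, so the step I would flag as the main—if modest—obstacle is the bookkeeping rather than any real difficulty: namely checking carefully that the shift by $x^*$ preserves whichever constraint qualification is being assumed, and keeping track of the conventions $(+\infty)+(-\infty)=+\infty$ so that the rearrangements of suprema and infima remain valid at points where $f$, $g$, or their conjugates are improper.
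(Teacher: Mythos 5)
Your proof is correct and follows exactly the route the paper intends: the paper states this corollary without proof as ``an easy consequence'' of Theorem~\ref{thm:Fenchel-duality-fd}, and your specialization to $Y=X$, $T=\Id$ with $f$ replaced by $f-x^*$ is the standard way to fill in that gap. The verification that the shift preserves the constraint qualifications, the identification $p=-(f+g)^*(x^*)$ and $d=-(f^*\Box g^*)(x^*)$, and the translation of dual attainment into attainment in the infimal convolution are all accurate.
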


Another nice consequence of Fenchel duality is the ability to obtain primal solutions from
dual ones, as we now record.

\begin{corollary}
Suppose the conditions for equality in the Fenchel duality Theorem~\ref{thm:Fenchel-duality-fd} hold,
and that $\bar y^* \in Y^*$ is an optimal dual solution. Then the point
$\bar x \in X$ is optimal for the primal problem if and only if it
satisfies the two conditions $T^* \bar
y^* \in \partial f(\bar x)$ and $-\bar y^*\in \partial g(T\bar x)$.
\end{corollary}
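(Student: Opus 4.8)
The plan is to reduce the optimality characterization to the Fenchel--Young equality condition from Proposition~\ref{prop:Fenchel-Young-fd}, exploiting that $p=d$ with dual attainment. Since $\bar y^*$ is an optimal dual solution and equality holds, the duality gap vanishes, so I would start by writing down the chain of inequalities that weak duality always furnishes. For any feasible primal point $\bar x$ and the fixed optimal $\bar y^*$, the Fenchel--Young inequality gives $f(\bar x)+f^*(T^*\bar y^*)\geq \langle T^*\bar y^*,\bar x\rangle$ and $g(T\bar x)+g^*(-\bar y^*)\geq \langle -\bar y^*,T\bar x\rangle$. Using the adjoint identity $\langle T^*\bar y^*,\bar x\rangle=\langle \bar y^*,T\bar x\rangle$, these two right-hand sides cancel upon addition, yielding
\begin{equation*}
f(\bar x)+g(T\bar x) \geq -f^*(T^*\bar y^*)-g^*(-\bar y^*) = d = p.
\end{equation*}

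This single inequality is the crux. Its left side is the primal objective evaluated at $\bar x$, and its right side is the common optimal value $p=d$. First I would observe that $\bar x$ is primal optimal precisely when $f(\bar x)+g(T\bar x)=p$, i.e. precisely when the displayed inequality holds with equality. Next I would note that equality in the \emph{sum} forces equality in each of the two summed Fenchel--Young inequalities separately, since each was individually an inequality in the same direction and their slack terms add. Therefore $\bar x$ is optimal if and only if both $f(\bar x)+f^*(T^*\bar y^*)=\langle T^*\bar y^*,\bar x\rangle$ and $g(T\bar x)+g^*(-\bar y^*)=\langle -\bar y^*,T\bar x\rangle$ hold simultaneously.

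To finish, I would invoke the equality clause of Proposition~\ref{prop:Fenchel-Young-fd}, which states that $f(x)+f^*(x^*)=\langle x^*,x\rangle$ holds exactly when $x^*\in\partial f(x)$. Applying this to the first equality with $x^*=T^*\bar y^*$ gives $T^*\bar y^*\in\partial f(\bar x)$, and applying it to the second with $g$, the point $T\bar x$, and $-\bar y^*$ gives $-\bar y^*\in\partial g(T\bar x)$. This establishes both subdifferential conditions, completing the equivalence.

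The main obstacle, though it is more of a bookkeeping subtlety than a genuine difficulty, is the step where I split equality-of-the-sum into equality-of-each-term. This is valid only because both Fenchel--Young inequalities point the same way and neither side is $+\infty$ in a way that masks slack; I would want to confirm that optimality of $\bar x$ forces $f(\bar x)$ and $g(T\bar x)$ to be finite (which follows since $p=d$ is finite under the attainment hypothesis, so the infimum is not $-\infty$ and $\bar x$ achieving it keeps both terms from being $+\infty$). Once finiteness is in hand, the additive splitting of the equality is immediate and the argument closes cleanly.
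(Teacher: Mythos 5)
Your proof is correct. The paper actually states this corollary without proof (it is ``recorded'' as an immediate consequence of Theorem~\ref{thm:Fenchel-duality-fd}), and your argument is precisely the standard one the authors use in the neighbouring proofs of the sandwich theorem and the subdifferential sum rule: add the two Fenchel--Young inequalities, use $\langle T^*\bar y^*,\bar x\rangle=\langle \bar y^*,T\bar x\rangle$ to cancel the pairing terms, identify equality in the sum with primal optimality via $p=d$, and split the equality term by term to invoke the equality clause of Proposition~\ref{prop:Fenchel-Young-fd}. Your closing remark on finiteness is exactly the right point to check before splitting the sum, and it is handled adequately.
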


The regularity conditions in Fenchel duality theorem can be weakened
when each function is \emph{polyhedral}, i.e., when their epigraph
is polyhedral.

\begin{theorem}[Polyhedral Fenchel duality]\emph{(See \cite[Corollary~5.1.9]{BorLew}.)}\label{thm:Fenchel-duality-poly} Suppose that $X$ is a finite-dimensional space.
The conclusions of the Fenchel duality
Theorem~\ref{thm:Fenchel-duality-fd} remain valid if the regularity
condition~\eqref{eq:Fen-dual-1} is replaced by the assumption that the functions $f$ and $g$ are
polyhedral with
$$\dom g \cap T \dom f \neq\varnothing.$$
\end{theorem}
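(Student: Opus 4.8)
The plan is to establish polyhedral Fenchel duality by reducing it to the already-proven Fenchel duality theorem through a separate treatment of weak and strong duality. Weak duality $p \geq d$ requires no regularity condition at all---it follows directly from the Fenchel--Young inequality (Proposition~\ref{prop:Fenchel-Young-fd}) exactly as in the general case---so the real content is proving $p = d$ with attainment under the weakened polyhedral hypothesis. I would therefore focus entirely on strong duality, assuming throughout that $f$ and $g$ are polyhedral and that $\dom g \cap T\dom f \neq \varnothing$.

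First I would dispose of the trivial case $p = -\infty$, where weak duality forces $d = -\infty$ as well and there is nothing to attain. So assume $p$ is finite (it cannot be $+\infty$ since the domains meet). The key structural fact I would invoke is that a polyhedral function is the maximum of finitely many affine functions plus the indicator of a polyhedron, equivalently its epigraph is a finite intersection of halfspaces; crucially, polyhedrality is preserved under the operations in play: $f^*$ and $g^*$ are again polyhedral, the precomposition $x \mapsto g(Tx)$ is polyhedral (since $T$ is linear and $\epi$ pulls back polyhedra to polyhedra), and finite sums of polyhedral functions are polyhedral. In finite dimensions the primal objective $x \mapsto f(x) + g(Tx)$ is thus itself polyhedral with the nonempty domain $\dom f \cap T^{-1}\dom g$.

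The central step is to show that the polyhedral condition $\dom g \cap T\dom f \neq \varnothing$ can be upgraded to verify the general regularity condition~\eqref{eq:Fen-dual-1}, or else to argue attainment directly. The cleanest route is the direct one: because $X$ is finite-dimensional and the primal objective is polyhedral with finite infimum $p$, a polyhedral function that is bounded below on its (nonempty polyhedral) domain attains its infimum---a finitely-generated polyhedron either contains a minimizing vertex or a recession direction along which the affine pieces are constant. Hence there is a primal optimal $\bar x$ with $f(\bar x) + g(T\bar x) = p$. To produce the matching dual solution, I would apply the optimality characterization: $\bar x$ minimizes $f(\cdot) + g(T\cdot)$ iff $0 \in \partial(f + g\circ T)(\bar x)$, and then invoke the polyhedral subdifferential sum rule, valid in finite dimensions under merely $\dom g \cap T\dom f \neq \varnothing$, to write $0 \in \partial f(\bar x) + T^*\partial g(T\bar x)$. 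This yields $\bar y^* \in Y^*$ with $-\bar y^* \in \partial g(T\bar x)$ and $T^*\bar y^* \in \partial f(\bar x)$; the Fenchel--Young equality cases then give $f(\bar x) + f^*(T^*\bar y^*) = \langle T^*\bar y^*, \bar x\rangle$ and $g(T\bar x) + g^*(-\bar y^*) = -\langle \bar y^*, T\bar x\rangle$, which add to show $-f^*(T^*\bar y^*) - g^*(-\bar y^*) = p$, so $d \geq p$ and equality holds with the dual supremum attained at $\bar y^*$.

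The main obstacle is justifying the polyhedral calculus rules---specifically the subdifferential sum rule $\partial(f + g\circ T) = \partial f + T^*\partial g\circ T$ under the weak hypothesis $\dom g \cap T\dom f \neq \varnothing$ rather than the interior/continuity condition~\eqref{eq:Fen-dual-2}. For general convex functions this rule fails without a constraint qualification, and the whole point of the theorem is that polyhedrality supplies the qualification for free. I would establish this by reducing to the classical polyhedral separation principle: two polyhedral sets that cannot be strictly separated but whose relative interiors may be disjoint still admit the sum rule because polyhedra can be separated by their face structure without any relative-interior overlap requirement. Concretely, one represents each polyhedral function via finitely many linear inequalities and runs a linear-programming duality argument, where strong LP duality holds whenever the primal is feasible and bounded---exactly our situation. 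Everything else (preservation of polyhedrality, finite-dimensional attainment, the Fenchel--Young bookkeeping) is routine once this rule is in hand, so I would cite~\cite[Corollary~5.1.9]{BorLew} for the underlying polyhedral separation or LP-duality input and present the reduction above as the proof.
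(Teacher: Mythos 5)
Your overall architecture is sound and genuinely different from the route the paper takes. The paper's own argument (given in outline in the remark following the Hahn--Banach theorem) works entirely through the perturbation function $h(u):=\inf_x\big(f(x)+g(Tx+u)\big)$: the polyhedral calculus of \cite[\S5.1]{BorLew} shows that $h$ is again polyhedral, and polyhedral functions satisfy $\dom h=\dom\partial h$, so the hypothesis $\dom g\cap T\dom f\neq\varnothing$ gives $0\in\dom h$, hence $\partial h(0)\neq\varnothing$, which is exactly what is needed to run the standard value-function proof of strong duality with dual attainment. Your route instead goes through primal attainment (a polyhedral function bounded below attains its infimum --- correct, by the fundamental theorem of linear programming, though note a polyhedron need not have vertices, so phrase this via the Minkowski--Weyl representation rather than ``a minimizing vertex'') followed by the polyhedral subdifferential sum rule and the Fenchel--Young equality bookkeeping. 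That bookkeeping is fine, and as a bonus your argument delivers primal attainment, which the general Theorem~\ref{thm:Fenchel-duality-fd} does not assert.

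The genuine gap is where you lean on the sum rule $\partial(f+g\circ T)(\bar x)=\partial f(\bar x)+T^*\partial g(T\bar x)$ under the bare hypothesis $\dom g\cap T\dom f\neq\varnothing$. As the paper itself records, Fenchel duality, the sandwich theorem, and the subdifferential sum rule are all easily inter-derivable; so in the polyhedral setting this sum rule carries essentially the full weight of Theorem~\ref{thm:Fenchel-duality-poly}, and it is precisely the step at which polyhedrality must be seen to supply the constraint qualification for free. You do not prove it: you gesture at polyhedral separation and LP duality and then propose to cite \cite[Corollary~5.1.9]{BorLew} ``for the underlying input'' --- but that corollary \emph{is} the polyhedral Fenchel duality theorem you are proving, so as written the argument is circular. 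To close the gap you would need to establish the polyhedral sum rule (or, equivalently, a polyhedral sandwich theorem) from first principles, e.g.\ from the fact that a Minkowski difference of polyhedra is again a polyhedron and hence closed, which is what rescues separation without any interior condition; alternatively, adopt the paper's decomposition, whose two inputs (stability of polyhedrality under infimal projection, and nonemptiness of the subdifferential of a polyhedral function at every point of its domain) are more elementary and do not presuppose any duality statement.
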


Fenchel duality applied to a linear programming program yields the well-known Lagrangian duality.

\begin{corollary}[Linear programming duality] \label{cor:FLC}
Given $c\in \RR^n$, $b\in\RR^m$ and $A$ an  $m\times n$ real matrix, one has
\begin{equation}\label{eq:fenchel_LP}
\inf_{x \in \RR^n} \{c^Tx \mid Ax\leq b \}  \geq
\sup_{\lambda \in \RR^m_+} \{ -b^T \lambda \mid A^T \lambda=-c \},
\end{equation}
\end{corollary}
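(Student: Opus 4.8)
The plan is to recognize the primal program as an instance of the Fenchel primal problem $p := \inf_{x}\{f(x)+g(Tx)\}$ and then to compute the associated Fenchel dual, matching it term-by-term with the right-hand side of the asserted inequality. Concretely, I would take $X := \RR^n$, $Y := \RR^m$, the bounded linear operator $T := A$ (so that $T^* = A^T$), the convex function $f(x) := c^T x$, and $g := \iota_{C}$, the indicator function of the polyhedron $C := \{y \in \RR^m \mid y \leq b\}$. With these choices $f(x)+g(Tx) = c^T x + \iota_C(Ax)$ equals $c^T x$ when $Ax \leq b$ and $+\infty$ otherwise, so $p = \inf\{c^T x \mid Ax \leq b\}$ is exactly the primal value in the statement.

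Next I would compute the two conjugates. Since $f$ is linear, $f^*(x^*) = \sup_x \langle x^* - c, x\rangle = \iota_{\{c\}}(x^*)$. For $g$, the conjugate is the support function $g^* = \sigma_C$; writing a generic $y \in C$ as $y = b - u$ with $u \in \RR^m_+$ gives $\sigma_C(y^*) = \langle b, y^*\rangle + \sup_{u \geq 0}(-\langle y^*, u\rangle)$, where the last supremum is $0$ if $y^* \in \RR^m_+$ and $+\infty$ otherwise, whence $g^*(y^*) = b^T y^* + \iota_{\RR^m_+}(y^*)$. Substituting into the Fenchel dual $d = \sup_{y^*}\{-f^*(T^* y^*) - g^*(-y^*)\}$, the term $-f^*(A^T y^*)$ forces the constraint $A^T y^* = c$, while $-g^*(-y^*) = b^T y^*$ subject to $-y^* \in \RR^m_+$. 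Setting $\lambda := -y^*$ converts these into $A^T\lambda = -c$, $\lambda \in \RR^m_+$, and objective $-b^T\lambda$, so that $d$ coincides with $\sup_{\lambda \in \RR^m_+}\{-b^T\lambda \mid A^T\lambda = -c\}$, the dual value in the statement.

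With $p$ and $d$ thus identified, the asserted inequality $p \geq d$ is precisely the weak duality inequality furnished by Theorem~\ref{thm:Fenchel-duality-fd}, which holds with no regularity hypothesis. I would note additionally that both $f$ and $g$ are polyhedral, so whenever the primal is feasible --- equivalently $\dom g \cap T\dom f = C \cap \ran A \neq \varnothing$ --- the Polyhedral Fenchel duality Theorem~\ref{thm:Fenchel-duality-poly} upgrades this to the equality $p = d$, recovering full strong duality.

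The step demanding the most care is the bookkeeping of sign conventions: correctly evaluating the support function $\sigma_C$ of the shifted negative orthant, and then carrying the reflection $\lambda = -y^*$ through both the constraint $A^T y^* = c$ and the objective so that the resulting dual matches the stated one sign-for-sign. None of this is deep, but a single misplaced sign would flip the orthant or the objective, so that is the step where I would slow down and verify each substitution explicitly.
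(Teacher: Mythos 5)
Your proof is correct and takes essentially the same route as the paper: the same choices $f(x)=c^Tx$, $g=\iota_{\{y\mid y\leq b\}}$, $T=A$, the same conjugates $f^*=\iota_{\{c\}}$ and $g^*(\lambda)=b^T\lambda+\iota_{\RR^m_+}(\lambda)$, weak duality for the inequality, and the polyhedral Fenchel duality theorem for equality under primal feasibility. Your sign bookkeeping via $\lambda=-y^*$ is carried out correctly and matches the stated dual.
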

where $\RR^m_+:=\big\{(x_1,x_2,\cdots,x_m)\mid x_i\geq0,\ i=1,2,\cdots,m\big\}$. Equality in~\eqref{eq:fenchel_LP} holds if $b\in\ran A+\RR^m_+$. Moreover, both extrema are obtained when finite.

\begin{proof}
Take $f(x):=c^Tx$, $T:=A$ and $g(y):=\iota_{b_\geq}(y)$ where $b_\geq:=\{y\in \RR^m\mid y\leq b\}$. Then apply the polyhedral Fenchel duality Theorem~\ref{thm:Fenchel-duality-poly} observing that $f^*=\iota_{\{c\}}$, and for any $\lambda\in\RR^m$,
$$g^*(\lambda)=\sup_{y\leq b}y^T\lambda=\left\{\begin{array}{ll}
b^T \lambda, & \text{if } \lambda\in \RR^m_+;\\
+\infty,& \text{otherwise};
\end{array}\right.$$
and~\eqref{eq:fenchel_LP} follows, since $\dom g\cap A\dom f=\{Ax\in\RR^m\mid Ax\leq b\}$.
\end{proof}

One can easily derive various relevant results from Fenchel duality, such as the Sandwich theorem, the subdifferential sum rule, and the Hahn-Banach extension theorem, among many others.

\begin{theorem}[Extended sandwich theorem] \label{thm:sandwich-utility}
Let $X$ and $Y$ be
Banach spaces and let $T:X \to Y$ be a bounded linear mapping.
Suppose that
$f:X \rightarrow \RX$, $g:Y \rightarrow \RX$ are proper convex functions which together with $T$ satisfy either~\eqref{eq:Fen-dual-1} or~\eqref{eq:Fen-dual-2}. Assume that $f\geq-g\circ T$.
Then there is an affine function $\alpha: X \to \RR$ of the form
$\alpha(x) = \langle T^*y^*,x\rangle + r$ satisfying $f \ge \alpha \ge  - g \circ T$.
Moreover, for any $\bar x$ satisfying $f(\bar x) = (-g\circ T)(\bar x)$, we have
$-y^* \in \partial g(T\bar x)$.
\end{theorem}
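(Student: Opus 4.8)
The plan is to derive the Extended sandwich theorem as a direct consequence of Fenchel duality (Theorem~\ref{thm:Fenchel-duality-fd}) applied to the given data $f$, $g$, and $T$. The hypothesis $f \ge -g\circ T$ means that $f(x) + g(Tx) \ge 0$ for all $x \in X$, so the primal value $p := \inf_{x}\{f(x)+g(Tx)\}$ satisfies $p \ge 0$. Since either \eqref{eq:Fen-dual-1} or \eqref{eq:Fen-dual-2} holds, Fenchel duality gives $p = d$ with attainment of the dual supremum when finite. The first key step is therefore to extract an optimal dual vector $\bar y^* \in Y^*$ achieving
\[
d = -f^*(T^*\bar y^*) - g^*(-\bar y^*) = p \ge 0.
\]

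The second step is to convert this dual inequality into the affine separating function. From $-f^*(T^*\bar y^*) - g^*(-\bar y^*) \ge 0$, I can choose a real constant $r$ with $-g^*(-\bar y^*) \ge r \ge f^*(T^*\bar y^*)$ (any $r$ in this nonempty interval works; the interval is nonempty precisely because the two quantities sandwich $0$ appropriately). Define $\alpha(x) := \langle T^*\bar y^*, x\rangle + r$. Using the Fenchel--Young inequality (Proposition~\ref{prop:Fenchel-Young-fd}), the bound $f^*(T^*\bar y^*) \le r$ rearranges to
\[
f(x) \ge \langle T^*\bar y^*, x\rangle - f^*(T^*\bar y^*) \ge \langle T^*\bar y^*, x\rangle + r = \alpha(x)
\]
for all $x$, giving $f \ge \alpha$. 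Symmetrically, from $g^*(-\bar y^*) \le -r$ and Fenchel--Young applied to $g$ at the point $Tx$ with functional $-\bar y^*$, one obtains $g(Tx) \ge \langle -\bar y^*, Tx\rangle - g^*(-\bar y^*) \ge -\langle \bar y^*, Tx\rangle + r$, i.e. $-g(Tx) \le \langle T^*\bar y^*, x\rangle - r$. Here I must be slightly careful with the sign of $r$: choosing $\alpha(x) = \langle T^*\bar y^*, x\rangle + r$ with $f^*(T^*\bar y^*) \le -r \le -g^*(-\bar y^*)$ is the sign convention that makes both inequalities $f \ge \alpha$ and $\alpha \ge -g\circ T$ come out correctly, so I would fix the convention at the start to avoid a sign slip.

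For the final ``moreover'' clause, suppose $\bar x$ satisfies $f(\bar x) = (-g\circ T)(\bar x)$, equivalently $f(\bar x) + g(T\bar x) = 0 = p$. Then $\bar x$ is a primal minimizer, and the complementary slackness / optimality conditions accompanying Fenchel duality force equality throughout the chain above at $\bar x$; in particular $g(T\bar x) + g^*(-\bar y^*) = \langle -\bar y^*, T\bar x\rangle$, which by the equality case of Fenchel--Young (Proposition~\ref{prop:Fenchel-Young-fd}) is exactly $-\bar y^* \in \partial g(T\bar x)$. The main obstacle I anticipate is purely bookkeeping rather than conceptual: tracking the signs through $T^*$ and the two conjugates so that the single functional $\bar y^*$ simultaneously dominates $f$ from below and $-g\circ T$ from above, and verifying that equality in the duality relation propagates to equality in each Fenchel--Young inequality at the optimal $\bar x$. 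Everything else follows immediately from the already-established Fenchel duality theorem and the Fenchel--Young equality characterization of the subdifferential.
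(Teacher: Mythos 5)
Your proposal is correct and follows essentially the same route as the paper's proof: apply Fenchel duality to get an attained dual solution $y^*$ with $-f^*(T^*y^*)-g^*(-y^*)=p\ge 0$, pick $r$ between the two conjugate values (your corrected convention $g^*(-y^*)\le r\le -f^*(T^*y^*)$ is the right one, and sits inside the paper's interval $[a,b]$), and obtain the subgradient claim from the equality case of Fenchel--Young at $\bar x$. The only cosmetic difference is that the paper phrases the choice of $r$ via $a:=\sup_z[-g(Tz)-\langle T^*y^*,z\rangle]$ and $b:=\inf_x[f(x)-\langle T^*y^*,x\rangle]$ rather than directly via the conjugates.
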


\begin{proof}
With notation as in the Fenchel duality Theorem~\ref{thm:Fenchel-duality-fd}, we know $d = p$, and since $p \ge 0$ because $f(x) \ge - g(Tx)$, the supremum in $d$ is attained. Therefore there exists $y^* \in Y^*$ such that
$$0\leq p=d=-f^*(T^*y^*)-g^*(-y^*).$$
Then, by Fenchel-Young inequality~\eqref{eq:Fenchel-Young-ineq-fd}, we obtain
\begin{equation}\label{eq:proof_sandwich}
0 \le p \le f(x)-\langle T^*y^*,x\rangle + g(y)+\langle y^*,y\rangle,
\end{equation}
for any $x\in X$ and $y\in Y$.
For any $z \in X$, setting $y = Tz$ in the previous
inequality, we obtain
$$
a:=\sup_{z \in X} [-g(Tz) - \langle T^* y^*,z\rangle]\le
b:=\inf_{x \in X}[f(x) - \langle T^* y^*, x\rangle]
$$
Now choose $r \in [a,b]$. The affine function $\alpha(x) := \langle T^*
y^*,x\rangle + r$ satisfies $f \ge \alpha \ge  - g \circ T$, as claimed.

The last assertion  follows from~\eqref{eq:proof_sandwich} simply by setting $x=\bar x$, where $\bar x$ satisfies $f(\bar x) = (-g\circ T)(\bar x )$.  Then we have
$\sup_{y\in Y}\{\langle-y^*,y\rangle-g(y)\}\leq  (-g\circ T)(\bar x )-\langle T^*y^*,\bar x\rangle$.
Thus $g^*(-y^*)+ g(T\bar x)\leq-\langle y^*,T\bar x\rangle$ and hence $-y^* \in \partial g(T\bar x)$.
\end{proof}

When $X=Y$ and $T$ is the identity we recover the classical Sandwich theorem.
The next example shows that without a constraint qualification, the sandwich theorem may fail.

\begin{example}
Consider $f,g:\RR\to \RX$ given by
$$f(x):=\left\{\begin{array}{ll}
-\sqrt{-x},& \text{for } x\leq 0,\\
+\infty& \text{otherwise,}
\end{array}\right.\quad and \quad
g(x):=\left\{\begin{array}{ll}
-\sqrt{x},& \text{for } x\geq 0,\\
+\infty& \text{otherwise.}
\end{array}\right.$$
In this case, $\bigcup_{\lambda>0} \lambda\left[\dom g - \dom f\right]=\left[0,+\infty\right[\neq\RR$ and it is not difficult to prove there is not any affine function which separates $f$ and $-g$, see Figure~\ref{fig:sandwich}.\qede
\end{example}

The prior constraint qualifications are sufficient but not necessary for the sandwich theorem as we illustrate in the next example.

\begin{example}
Let $f,g:\RR\to \RX$ be given by
$$f(x):=\left\{\begin{array}{ll}
\frac{1}{x},& \text{for } x> 0,\\
+\infty& \text{otherwise,}
\end{array}\right.\quad and \quad
g(x):=\left\{\begin{array}{ll}
-\frac{1}{x},& \text{for } x< 0,\\
+\infty& \text{otherwise.}
\end{array}\right.$$
Despite that $\bigcup_{\lambda>0} \lambda\left[\dom g - \dom f\right]=\left]-\infty,0\right[\neq\RR$, the affine function $\alpha(x):=-x$ satisfies $f\geq\alpha\geq -g$, see Figure~\ref{fig:sandwich}.\qede
\end{example}

\begin{figure}[h!]
\centering
\includegraphics[height=.4\textwidth]{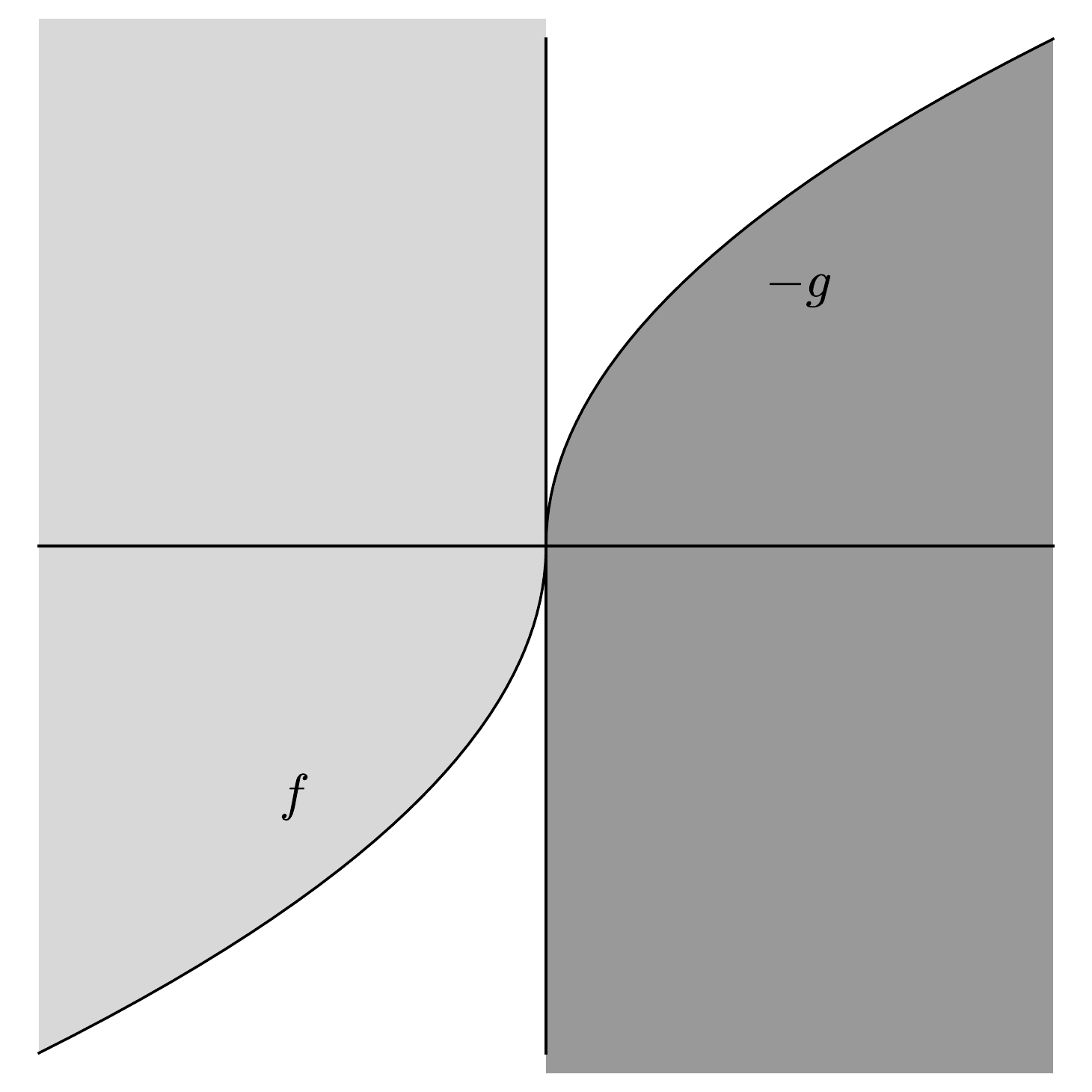}\qquad\includegraphics[height=.4\textwidth]{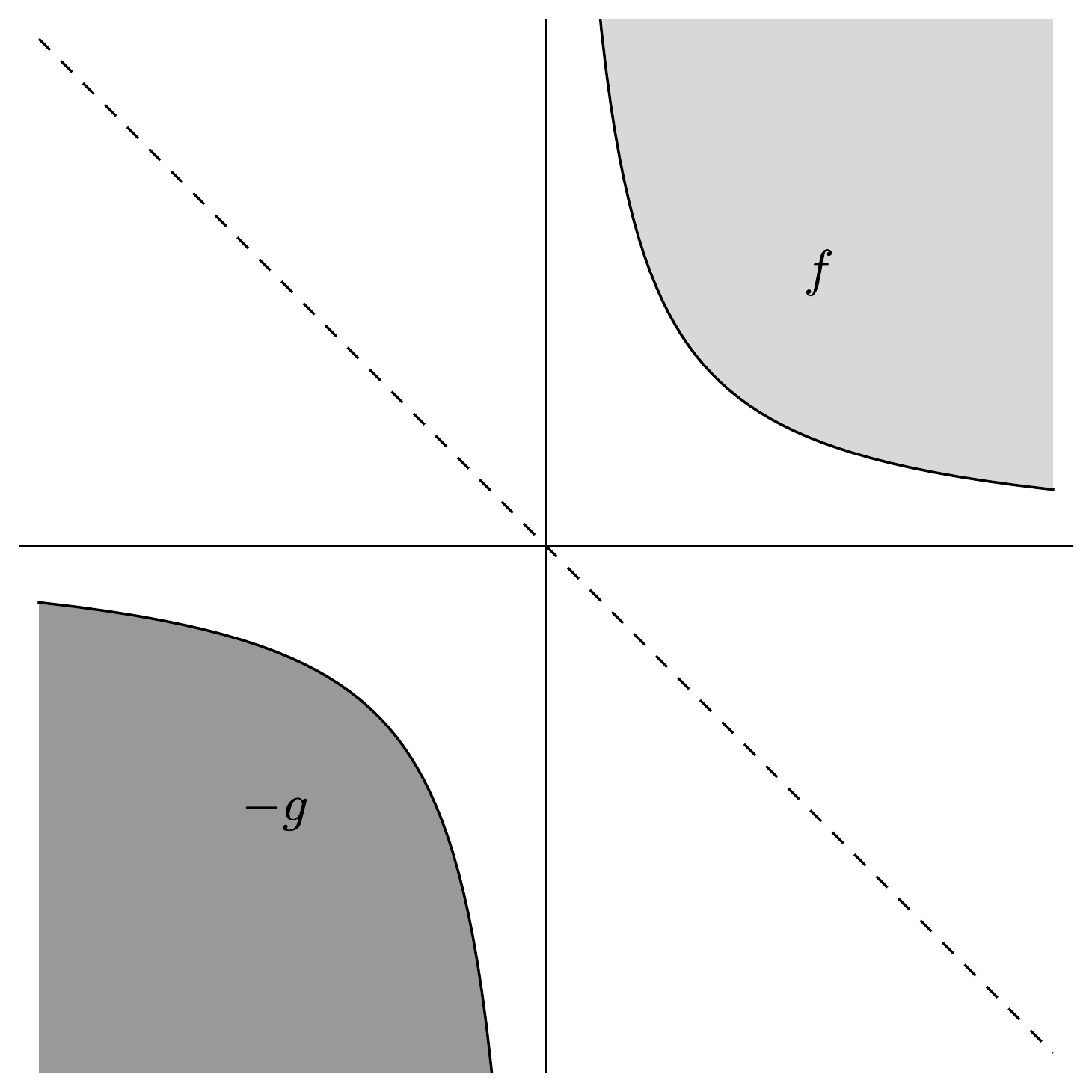}
\caption{On the left we show the failure of the sandwich theorem in the absence of the constraint qualification;
of the right we show that the constraint qualification is not necessary.}\label{fig:sandwich}
\end{figure}

\begin{theorem}[Subdifferential sum rule] \label{thm:sum-rule}
Let $X$ and $Y$ be Banach spaces, and let $f:X \to \RX$ and
$g:Y \to \RX$ be convex functions and let $T:X \to Y$ be a bounded
linear mapping. Then at any point $x \in X$ we have the sum rule
$$
\partial(f + g\circ T)(x) \supseteq \partial f(x) + T^*(\partial g(Tx))
$$
with equality if (\ref{eq:Fen-dual-1}) or (\ref{eq:Fen-dual-2}) hold.
\end{theorem}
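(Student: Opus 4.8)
The plan is to prove the inclusion $\partial f(x)+T^*(\partial g(Tx))\subseteq\partial(f+g\circ T)(x)$ directly from the definition of the subdifferential, and then to obtain the reverse inclusion (and hence equality) from the Sandwich theorem, Theorem~\ref{thm:sandwich-utility}, which is exactly where the constraint qualification will be needed. For the unconditional inclusion, take $x^*\in\partial f(x)$ and $y^*\in\partial g(Tx)$. The defining inequalities give $\scal{x^*}{u-x}\le f(u)-f(x)$ for all $u\in X$ and $\scal{y^*}{v-Tx}\le g(v)-g(Tx)$ for all $v\in Y$. Specializing the second to $v=Tu$ and using the adjoint identity $\scal{y^*}{Tu}=\scal{T^*y^*}{u}$ yields $\scal{T^*y^*}{u-x}\le g(Tu)-g(Tx)$; adding the two inequalities gives $\scal{x^*+T^*y^*}{u-x}\le (f+g\circ T)(u)-(f+g\circ T)(x)$ for every $u$, that is, $x^*+T^*y^*\in\partial(f+g\circ T)(x)$. (If either subdifferential is empty, the left-hand set is empty and the inclusion is vacuous.)

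For the reverse inclusion I would assume \eqref{eq:Fen-dual-1} or \eqref{eq:Fen-dual-2} and fix $\phi^*\in\partial(f+g\circ T)(x)$; in particular $x\in\dom f$ and $Tx\in\dom g$. The key step is to recast this as the minimization setting required by the Sandwich theorem. Subtracting the linear functional $\phi^*$ together with suitable constants, put $f_0:=f-\phi^*-\big(f(x)-\scal{\phi^*}{x}\big)$ and $g_0:=g-g(Tx)$; these are proper convex functions with the same domains as $f$ and $g$, so the chosen constraint qualification is unchanged, and $f_0(x)=0=g_0(Tx)$. By Proposition~\ref{1.4.3}, the condition $\phi^*\in\partial(f+g\circ T)(x)$ says precisely that $x$ globally minimizes $f_0+g_0\circ T$, whose minimum value is $0$; equivalently $f_0\ge -g_0\circ T$, with equality at $x$.

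Then I would apply Theorem~\ref{thm:sandwich-utility} to $f_0,g_0,T$. It produces $y^*\in Y^*$ and $r\in\RR$ such that the affine map $\alpha(u)=\scal{T^*y^*}{u}+r$ satisfies $f_0\ge\alpha\ge -g_0\circ T$, and, since $f_0(x)=-g_0(Tx)$, its concluding clause gives $-y^*\in\partial g_0(Tx)=\partial g(Tx)$. Evaluating the sandwich at $x$ forces $\alpha(x)=f_0(x)=0$, so $\alpha$ is an affine minorant of $f_0$ that is exact at $x$; this gives $\scal{T^*y^*}{u-x}\le f_0(u)-f_0(x)$ for all $u$, i.e. $T^*y^*\in\partial f_0(x)=\partial f(x)-\phi^*$. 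Hence $x^*:=\phi^*+T^*y^*\in\partial f(x)$, and combining with $-y^*\in\partial g(Tx)$ we obtain $x^*+T^*(-y^*)=\phi^*$, exhibiting $\phi^*\in\partial f(x)+T^*(\partial g(Tx))$.

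The one substantive ingredient is the invocation of the Sandwich theorem: it is there that the constraint qualification does its work, since that result rests on the Fenchel duality equality $p=d$ with attainment of the dual supremum, and this is what simultaneously produces the sandwiching affine function and the multiplier $y^*$. Everything else is bookkeeping, namely the translation by $\phi^*$ and the constants that convert the subgradient condition into the minimization problem $f_0\ge -g_0\circ T$, and the elementary fact that additive constants and linear perturbations shift subdifferentials predictably, so that $\partial f_0(x)=\partial f(x)-\phi^*$ while $\partial g_0(Tx)=\partial g(Tx)$.
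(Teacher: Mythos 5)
Your proof is correct, but it takes a genuinely different route from the paper's. The paper perturbs only $f$, observing that $\phi^*\in\partial(f+g\circ T)(x)$ means $0\in\partial\bigl(f-\langle \phi^*,\cdot\,\rangle+g\circ T\bigr)(x)$, applies the Fenchel duality Theorem~\ref{thm:Fenchel-duality-fd} directly to the pair $f-\langle \phi^*,\cdot\,\rangle$ and $g$ to produce $y^*$ with $f(x)-\langle \phi^*,x\rangle+g(Tx)=-f^*(T^*y^*+\phi^*)-g^*(-y^*)$, and then squeezes two Fenchel--Young inequalities to force equality, reading off $T^*y^*+\phi^*\in\partial f(x)$ and $-y^*\in\partial g(Tx)$. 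You instead normalize both functions so that the subgradient condition becomes the global inequality $f_0\ge-g_0\circ T$ with equality at $x$, and invoke the Sandwich Theorem~\ref{thm:sandwich-utility}: its affine separator, exact at $x$, gives $T^*y^*\in\partial f_0(x)$, and its concluding clause gives $-y^*\in\partial g_0(Tx)$. Both arguments rest on the same duality engine --- the Sandwich theorem is itself a corollary of Fenchel duality, and the paper's remark after Theorem~\ref{Hahn-Banach} notes these results are all inter-derivable --- so neither buys extra generality; the paper's version is slightly more economical (one perturbation, no affine function to construct), while yours is more geometric and makes visible where the constraint qualification enters (existence of the separating affine minorant). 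One small bookkeeping point worth making explicit: condition \eqref{eq:Fen-dual-1} also requires lower semicontinuity, which your $f_0$ and $g_0$ inherit from $f$ and $g$ since you only subtract a continuous linear functional and constants; with that said, the argument is complete.
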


\begin{proof}
The inclusion is straightforward by using the definition of the subdifferential, so we prove the reverse inclusion. Fix any $x\in X$ and let $x^*\in\partial(f+g\circ T)( x)$. Then
$0\in\partial(f-\langle x^* ,\cdot\,\rangle+g\circ T)( x)$. Conditions for the equality in Theorem~\ref{thm:Fenchel-duality-fd} are satisfied for the functions $f(\cdot)-\langle x^*,\cdot\,\rangle$ and $g$. Thus, there exists $y^*\in Y^*$ such that
$$f( x)-\langle x^*, x\rangle +g(T x)=-f^*(T^*y^*+x^*)-g^*(-y^*).$$
Now set $z^*:=T^*y^*+x^*$. Hence, by the Fenchel-Young inequality~\eqref{eq:Fenchel-Young-ineq-fd}, one has
$$0\leq f( x)+f^*(z^*)-\langle z^*, x\rangle=-g(T x)-g^*(-y^*)-\langle T^*y^*, x\rangle\leq0;$$
whence,
\begin{eqnarray*}
f( x)+f^*(z^*)=\langle z^*, x\rangle\\
g(T x)+g^*(-y^*)=\langle -y^*,T x\rangle.
\end{eqnarray*}
Therefore equality in Fenchel-Young occurs, and one has $z^*\in\partial f( x)$ and $-y^*\in\partial g(T x)$, which completes the proof.
\end{proof}

The subdifferential sum rule for two convex functions with a finite common point where one of them is continuous was proved by Rockafellar in 1966 with an argumentation based on Fenchel duality, see~\cite[Th.~3]{R66}. In an earlier work in 1963, Moreau~\cite{M63_2} proved the subdifferential sum rule for a pair of convex and lsc functions, in the case that infimal convolution of the conjugate functions is achieved, see~\cite[p.~63]{Mnotes} for more details. Moreau actually proved this result for functions which are the supremum of a family of affine continuous linear functions, a set which agrees with the convex and lsc functions when $X$ is a locally convex vector space, see~\cite{M62} or~\cite[p.~28]{Mnotes}. See also~\cite{HUMSV,HUP,BotWank1,BBY2} for more information about the subdifferential calculus rule.

\begin{theorem}[Hahn--Banach extension] \label{Hahn-Banach}
Let $X$ be a Banach space and
let $f \colon X \rightarrow \RR$ be a continuous sublinear function with $\dom f=X$.
Suppose that $L$ is a linear subspace of $X$
and the function $h \colon L \rightarrow \RR$ is
linear and {\em dominated} by $f$, that is, $f \geq h$ on $L$.
Then there exists
$x^*\in X^*$, dominated by $f$, such that
$$h(x)=\langle x^*,x\rangle,\text{~for all~} x\in L.$$
\end{theorem}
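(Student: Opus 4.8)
The plan is to close the ``Hahn--Banach circle'' by deducing the extension from the Extended Sandwich Theorem~\ref{thm:sandwich-utility}, taking $Y:=X$ and $T:=\Id$. The only genuinely delicate point is \emph{how} to cast the data as the two functions of that theorem so that a constraint qualification is available; everything else is sign bookkeeping. I would take the $X$-space function to be the subspace functional $\phi:=-h+\iota_L$, i.e. $\phi(x)=-h(x)$ for $x\in L$ and $\phi(x)=+\infty$ otherwise, and I would take the $Y$-space function to be $f$ itself. Note that $\phi$ is proper (it vanishes at $0$) and convex, since on the subspace $L$ it agrees with the linear map $-h$ and is $+\infty$ off the convex set $L$; and $f$ is convex because it is sublinear.

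With these choices the sandwich hypothesis $\phi\geq -f\circ T=-f$ is exactly the domination assumption: on $L$ it reads $-h\geq -f$, i.e. $f\geq h$, and off $L$ it is trivial. The crucial observation is that the constraint qualification~\eqref{eq:Fen-dual-2} now holds for this assignment, because $f$ is continuous on all of $X$, so
$$\cont f\cap T\dom\phi=X\cap L=L\neq\varnothing.$$
This is precisely why $f$ must sit in the $g$-slot: the subspace functional $\phi$ has no point of continuity when $L\neq X$, so the reversed assignment would force $L$ closed through~\eqref{eq:Fen-dual-1}, which we wish to avoid. Applying Theorem~\ref{thm:sandwich-utility} then produces $y^*\in Y^*=X^*$ and $r\in\RR$ such that the affine function $\alpha(x)=\langle y^*,x\rangle+r$ satisfies $\phi\geq\alpha\geq -f$. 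I would set $x^*:=-y^*\in X^*$, so that the right inequality becomes $f(x)\geq\langle x^*,x\rangle-r$ for all $x\in X$, while the left inequality evaluated on $L$ gives $\langle x^*,x\rangle-r\geq h(x)$ for all $x\in L$.

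The endgame has two short steps. First, I would evaluate both inequalities at the origin: sublinearity gives $f(0)=0$ and linearity gives $h(0)=0$, so $0\geq -r$ and $-r\geq 0$, whence $r=0$ and $\alpha=\langle x^*,\cdot\rangle$ is actually linear. With $r=0$ the inequality $f\geq\langle x^*,\cdot\rangle$ says exactly that $x^*$ is dominated by $f$. Second, on $L$ we now have $\langle x^*,x\rangle\geq h(x)$; replacing $x$ by $-x\in L$ and using linearity of both sides yields the reverse inequality, so $\langle x^*,x\rangle=h(x)$ for every $x\in L$, which is the required extension. The main obstacle is conceptual rather than computational: recognizing that the continuous sublinear $f$ (not the subspace functional) must play the role of the $Y$-space function in order to satisfy~\eqref{eq:Fen-dual-2}, thereby dispensing with any closedness assumption on $L$; once the correct assignment is made, forcing $r=0$ at the origin and symmetrizing on $L$ are routine.
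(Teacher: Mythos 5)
Your proof is correct and follows essentially the same route as the paper: the paper applies Fenchel duality (Theorem~\ref{thm:Fenchel-duality-fd}) directly to $f$ and $g:=-h+\iota_L$ with $T=\Id$, which is exactly your decomposition, and likewise relies on the continuity of $f$ to supply the constraint qualification, on $f(0)=h(0)=0$ to kill the constant (there via $f^*(x^*)\geq 0$ and then $f^*(x^*)=0$, in your version via $r=0$), and on the linearity of $L$ to upgrade the one-sided bound to equality. Routing the argument through the Extended Sandwich Theorem~\ref{thm:sandwich-utility} instead of the raw duality statement is only a repackaging, and your explicit remark that the continuous function $f$ must occupy the slot whose continuity is tested in~\eqref{eq:Fen-dual-2} is, if anything, a point the paper leaves implicit.
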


\begin{proof}
Take $g:=-h+\iota_L$ and apply Theorem~\ref{thm:Fenchel-duality-fd} to $f$ and $g$ with $T$ the identity mapping. Then, there exists $x^*\in X^*$
such that
\begin{align}
0&\leq\inf_{x\in X}\left\{f(x)-h(x)+\iota_L(x)\right\}\nonumber\\
&=-f^*(x^*)-\sup_{x\in X}\{\langle -x^*,x\rangle +h(x) -\iota_L(x)  \}\nonumber\\
&=-f^*(x^*)+\inf_{x\in L}\{\langle x^*,x\rangle-h(x)\};\label{eq:H-B}
\end{align}
whence,
$$f^*(x^*)\leq \langle x^*,x\rangle-h(x),\quad\text{for all } x\in L.$$
Observe that $f^*(x^*)\geq 0$ since $f(0)=0$. Thus, being $L$ a linear subspace, we deduce from the above inequality that
$$h(x)=\langle x^*,x\rangle,\quad\text{for all } x\in L.$$
Then~\eqref{eq:H-B} implies $f^*(x^*)=0$, from where
$$f(x)\geq\langle x^*,x\rangle ,\quad\text{for all } x\in X,$$
and we are done.
\end{proof}

\begin{remark}
(Moreau's max formula, Theorem~\ref{thm:max-formula})---a true child
of Cauchy's principle of steepest descent---can be also derived from
Fenchel duality. In fact, the non-emptiness of the subdifferential
at a point of continuity, Moreau's max formula, Fenchel duality, the
Sandwich theorem, the subdifferential sum rule, and Hahn-Banach
extension theorem are all equivalent, in the sense that they are
easily inter-derivable.

In outline, one considers $h(u):=\inf_x \big(f(x)+g(Ax+u)\big)$ and
checks that $\partial h(0) \neq \emptyset$ implies the Fenchel and
Lagrangian duality results; while condition \eqref{eq:Fen-dual-1} or
\eqref{eq:Fen-dual-2} implies $h$ is continuous at zero and thus
Theorem~\ref{thm:max-formula} finishes the proof. Likewise, the
polyhedral calculus \cite[\S5.1]{BorLew} implies $h$ is polyhedral
when $f$ and $g$ are and  shows that polyhedral functions have $\dom
h = \dom \partial h$. This establishes Theorem
\ref{thm:Fenchel-duality-poly}. This also recovers  abstract LP
duality (e.g., semidefinite programming and conic duality)  under
condition \eqref{eq:Fen-dual-1}. See~\cite{BorLew,BorVan} for more
details.\qede
\end{remark}

Let us turn to two illustrations of the power of convex analysis within functional analysis.

A \emph{Banach limit} is a bounded linear functional $\Lambda$ on the space of bounded sequences of real numbers $\ell^\infty$ such that
\begin{enumerate}
\item $\Lambda((x_{n+1})_{n\in\NN})=\Lambda((x_n)_{n\in\NN})$ (so it only depends on the sequence's tail),
\item $\liminf_k x_k \le \Lambda\big((x_n)_{n\in\NN}\big) \le \limsup_k x_k$
\end{enumerate}
where $(x_n)_{n\in\NN}=(x_1,x_2,\ldots)\in \ell^{\infty}$ and $(x_{n+1})_{n\in\NN}=(x_2,x_3,\ldots)$. Thus $\Lambda$ agrees with the limit on $c$, the subspace of sequences whose limit exists. Banach limits care peculiar objects!

The Hahn-Banach extension theorem can be used show the existence of Banach limits (see  Sucheston \cite{SUCH} or \cite[Exercise~5.4.12]{BorVan}). Many of its earliest applications were to summability theory and related fields.
We sketch Sucheston's proof as follows.

\begin{theorem}[Banach limits]\label{banal}
\emph{(See~\cite{SUCH}.)}
Banach limits exist.
\end{theorem}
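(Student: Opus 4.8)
The plan is to construct a sublinear function $p$ on $\ell^\infty$ that dominates the target functional on a suitable subspace, and then invoke the Hahn--Banach extension theorem (Theorem~\ref{Hahn-Banach}) to produce the Banach limit $\Lambda$. The natural candidate is built from the Ces\`aro-type averages of the shifted sequence. Concretely, for $x=(x_n)_{n\in\NN}\in\ell^\infty$ I would define
\begin{equation*}
p(x):=\inf\left\{\limsup_{n\to\infty}\frac{1}{k}\sum_{j=1}^{k}x_{n+m_j}\ \Big|\ k\in\NN,\ m_1,\ldots,m_k\in\NN\right\},
\end{equation*}
or more simply the limit superior of the uniform averages $\limsup_{k\to\infty}\frac1k\sum_{j=1}^k x_{n+j}$; the point is to choose $p$ so that it is \emph{shift-invariant}, sublinear, and squeezed between $\liminf$ and $\limsup$. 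First I would verify that $p$ is finite-valued on $\ell^\infty$ (it is bounded by $\|x\|_\infty$), positively homogeneous, and subadditive, so that $p$ is a continuous sublinear function with $\dom p=\ell^\infty$ as required by the hypotheses of Theorem~\ref{Hahn-Banach}.

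Next I would apply Hahn--Banach with $f:=p$, with $L:=c$ the subspace of convergent sequences, and with $h:=\lim$ the ordinary limit functional on $c$. On $c$ one has $\lim x_n=\limsup x_n=p(x)$, so $h$ is linear and dominated by $p$ on $L$. Theorem~\ref{Hahn-Banach} then yields $\Lambda\in(\ell^\infty)^*$ with $\Lambda|_c=\lim$ and $\Lambda\le p$ on all of $\ell^\infty$. The final step is to read off the three defining properties of a Banach limit from the domination $\Lambda\le p$. For the sandwiching property, I would apply $\Lambda\le p$ to both $x$ and $-x$: since $p(x)\le\limsup_k x_k$ and $p(-x)\le\limsup_k(-x_k)=-\liminf_k x_k$, linearity gives $\liminf_k x_k\le\Lambda(x)\le\limsup_k x_k$. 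For shift-invariance, writing $Sx:=(x_{n+1})_{n\in\NN}$ for the shift, I would check that $p(x-Sx)\le 0$ and $p(Sx-x)\le 0$, which forces $\Lambda(x-Sx)=0$, i.e.\ $\Lambda(Sx)=\Lambda(x)$; this is exactly where the shift-invariant construction of $p$ does its work.

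The main obstacle is the construction and verification of the sublinear majorant $p$: one must choose the averaging so that it is simultaneously subadditive and satisfies \emph{both} $p(x-Sx)\le0$ \emph{and} $p(Sx-x)\le0$, since together these pin down shift-invariance of $\Lambda$ rather than just a one-sided inequality. The uniform Ces\`aro average handles this cleanly because $\frac1k\sum_{j=1}^k\big(x_{n+j}-x_{n+j+1}\big)=\frac1k\big(x_{n+1}-x_{n+k+1}\big)\to0$ as $k\to\infty$, so the telescoping makes $p(x-Sx)=0$; subadditivity follows from subadditivity of $\limsup$ and of the averages. Once $p$ is in hand the rest is routine, so I would invest the care in confirming these three properties of $p$ before quoting Theorem~\ref{Hahn-Banach}.
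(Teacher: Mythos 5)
Your proposal is correct and follows essentially the same route as the paper's (Sucheston's) proof: build a sublinear, asymptotically shift-invariant majorant from Ces\`aro-type averages, extend the limit functional from $c$ by the Hahn--Banach Theorem~\ref{Hahn-Banach}, and read off shift-invariance from the telescoping estimate $p(x-Sx)\le 0$, $p(Sx-x)\le 0$ and the sandwich from $\Lambda\le p\le\limsup$. The only difference is cosmetic: the paper uses Sucheston's functional $x\mapsto\lim_{n}\bigl(\sup_{j}\tfrac1n\sum_{i=1}^{n}x_{i+j}\bigr)$, for which the limit genuinely exists, in place of your $\inf$-over-shifts/$\limsup$ variant, but both satisfy the same three properties and the remaining steps coincide.
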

\begin{proof}
Let $c$ be the subspace of convergent sequences in $\ell^{\infty}$.  Define $f:\ell^{\infty}\rightarrow\RR$ by
\begin{align}\label{eq:limit_Banach}
x:=(x_n)_{n\in\NN}\mapsto\lim_{n\rightarrow\infty}\left
(\sup_{j}\frac{1}{n}\sum_{i=1}^n x_{i+j}\right).
\end{align}
Then $f$ is sublinear with full domain, since the limit in~\eqref{eq:limit_Banach} always exists (see~\cite[p.~309]{SUCH}).  Define $h$ on $c$ by $h:=\lim_n x_n$ for every $x:=(x_n)_{n\in\NN}$ in $c$. Hence $h$ is linear  and agrees  with $f$ on $c$. Applying the Hahn-Banach extension Theorem~\ref{Hahn-Banach},
 there exists
$\Lambda\in (\ell^{\infty})^*$, dominated by $f$, such that
$\Lambda=h$ on $c$. Thus $\Lambda$ extends the limit  linearly from $c$ to $\ell^{\infty}$. Let $S$ denote the forward shift defined as $S((x_n)_{n\in\NN}):=(x_{n+1})_{n\in\NN}$. Note that $f(Sx-x)=0$, since
$$|f(Sx-x)|=\left|\lim_{n\to\infty}\left(\sup_{j}\frac{1}{n}(x_{j+n+1}-x_{j+1})\right)\right|\leq\lim_{n\to\infty}\frac{2}{n}\sup_j|x_j|=0.$$
Thus, $\Lambda(Sx)-\Lambda(x)=\Lambda(Sx-x) \le 0$,  and
$\Lambda(x)-\Lambda(Sx)=\Lambda(x-Sx)\leq f(x-Sx)=0;$
that is, $\Lambda$ is indeed a Banach limit.
\end{proof}

\begin{remark}
One of the referees kindly pointed out that in the proof of Theorem~\ref{banal}, the function $h$ can be simply defined by
$h:\{0\}\rightarrow\RR$ with $ h(0)=0$.

\end{remark}
\begin{theorem}[Principle of uniform boundedness]
\emph{(See (\cite[Example~1.4.8]{BorVan}.)}
\label{ex:pub}
Let  $Y$ be another Banach space and $T_\alpha \colon X \to Y$ for $\alpha \in \mathcal A$ be bounded linear operators. Assume  that $\sup_{\alpha \in A}\|T_\alpha(x)\| < +\infty$ for each $x$ in $X$. Then  $\sup_{\alpha \in A}\|T_\alpha\| <+\infty.$
 \end{theorem}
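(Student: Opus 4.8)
The plan is to package the pointwise-boundedness hypothesis into a single convex function and then invoke the automatic continuity of finite-valued lower semicontinuous convex functions. First I would define
$$f(x) := \sup_{\alpha \in \mathcal A} \|T_\alpha(x)\|, \qquad x \in X.$$
For each fixed $\alpha$, the map $x \mapsto \|T_\alpha(x)\|$ is the composition of the norm with a bounded linear operator, hence convex and continuous; by Fact~\ref{basic-prin}\ref{basic-prin:E1} the pointwise supremum $f$ is again convex and lower semicontinuous. The hypothesis $\sup_{\alpha} \|T_\alpha(x)\| < +\infty$ for every $x$ says precisely that $\dom f = X$, so $f$ is a proper lower semicontinuous convex function with $\inte\dom f = X$.

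Next I would apply Fact~\ref{basic-prin}\ref{basic-prin:E4}(b): a lower semicontinuous convex function is continuous at every point of the interior of its domain. In particular $f$ is continuous at $0$, and since $f(0) = \sup_\alpha \|T_\alpha(0)\| = 0$, there exist $\delta > 0$ and $M \ge 0$ with $f(x) \le M$ whenever $\|x\| \le \delta$.

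Finally I would exploit homogeneity. For any $\alpha$ and any $x$ with $\|x\| \le \delta$ we have $\|T_\alpha(x)\| \le f(x) \le M$; scaling, for arbitrary $x$ with $\|x\| = 1$ this gives $\|T_\alpha(x)\| = \delta^{-1}\|T_\alpha(\delta x)\| \le M/\delta$. Taking the supremum over the unit sphere yields $\|T_\alpha\| \le M/\delta$ uniformly in $\alpha$, whence $\sup_\alpha \|T_\alpha\| \le M/\delta < +\infty$.

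The main obstacle is not in any of these elementary manipulations but is entirely concentrated in Fact~\ref{basic-prin}\ref{basic-prin:E4}(b): this is exactly the place where completeness of $X$ (through the Baire category theorem) enters, and it is the convex-analytic incarnation of the same category argument that drives the classical Banach--Steinhaus proof. Everything else---the convexity and lower semicontinuity of $f$, the reduction to boundedness on a ball, and the homogeneity rescaling---is routine. It is worth noting that one could alternatively obtain continuity of $f$ at $0$ by observing that $f$ is locally bounded above there and appealing to Fact~\ref{basic-prin}\ref{basic-prin:E4}(a), but the direct invocation above is the most economical route.
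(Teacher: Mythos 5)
Your proposal is correct and follows essentially the same route as the paper: define $f(x):=\sup_{\alpha}\|T_\alpha(x)\|$, observe it is a finite, lower semicontinuous, convex (indeed sublinear) function via Fact~\ref{basic-prin}\ref{basic-prin:E1}, invoke Fact~\ref{basic-prin}\ref{basic-prin:E4} to get continuity at the origin, and rescale by homogeneity to bound the operator norms uniformly. The only differences are cosmetic (the paper fixes the bound $1+f_A(0)=1$ on an $\varepsilon$-ball where you use a generic $M$), and your remark that the Baire category argument is entirely absorbed into Fact~\ref{basic-prin}\ref{basic-prin:E4}(b) matches the paper's closing observation that uniform boundedness is revealed to be continuity of $f_A$.
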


 \begin{proof}
  Define a function $f_A$ by $$f_A(x):=\sup_{\alpha \in
A}\|T_\alpha(x)\|$$ for each $x$ in $X$. Then,  as observed in
Fact~\ref{basic-prin}\ref{basic-prin:E1}, $f_A$ is convex. It is also lower semicontinuous since each
mapping $x \mapsto \|T_\alpha(x)\|$ is continuous. Hence $f_A$ is a finite, lower semicontinuous and convex (actually
sublinear)  function. Now Fact~\ref{basic-prin}\ref{basic-prin:E4} ensures $f_A$ is
continuous at the origin. Select $\varepsilon >0$ with $\sup\{
f_A(x) \mid\|x\| \le \varepsilon\} \le 1+f_A(0)=1$. It follows that
$$
\sup_{\alpha \in A}\|T_\alpha\|=\sup_{\alpha \in A}\frac{1}{\varepsilon}\sup_{\|x\| \le
\varepsilon}\|T_\alpha(x)\|=\frac{1}{\varepsilon}\sup_{\|x\| \le \varepsilon}\sup_{\alpha \in
A}\|T_\alpha(x)\| \le \frac{1}{\varepsilon}.$$
Thus, uniform boundedness is revealed to be continuity of $f_A$.
 \end{proof}

\section{The Chebyshev problem}\label{SecChev:1}

Let $C$ be a nonempty subset of $X$. We define the {\em nearest point mapping} by
\label{note:nearest-pt}
$$
P_C(x) := \{v \in C \mid \|v - x\| = \di_C(x)\}.
$$
A set $C$ is said to be a \emph{Chebyshev set} if $P_C(x)$ is a singleton for every
$x \in X$.
If $P_C(x) \ne\varnothing$ for every $x \in X$, then $C$ is said to be {\em proximal}; the term {\em proximinal}
is also used.

In 1961 Victor Klee~\cite{K61} posed the following fundamental question: Is every Chebyshev set in a Hilbert space
convex? At this stage, it is known that the answer is affirmative for weakly closed sets. In what follows we will present a proof
of this fact via convex duality. To this end, we will make use of the following fairly simple lemma.

\begin{lemma}\emph{(See \cite[Proposition~4.5.8]{BorVan}.)} \label{prop:reflexive-weakly-closed-Chebyshev}
Let $C$ be a weakly closed Chebyshev subset of a Hilbert space $H$. Then the nearest point mapping $P_C$ is continuous.
\end{lemma}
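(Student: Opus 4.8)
The plan is to fix $x \in H$ together with a sequence $x_n \to x$ in norm and to prove $P_C(x_n) \to P_C(x)$; note that $P_C$ is genuinely single-valued since $C$ is Chebyshev. I would argue via the subsequence principle: it suffices to show that every subsequence of $\big(P_C(x_n)\big)$ admits a further subsequence converging in norm to $P_C(x)$.

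First I would secure boundedness. Because $\di_C$ is non-expansive (Fact~\ref{basic-prin}), we have $\di_C(x_n) \to \di_C(x)$, and hence $\|P_C(x_n)\| \le \|x_n\| + \di_C(x_n)$ stays bounded. Since $H$ is a Hilbert space, any subsequence therefore has a weakly convergent further subsequence, say $P_C(x_{n_k}) \weakly y$ with $y \in H$. Next I would identify the weak limit. As $C$ is weakly closed and each $P_C(x_{n_k}) \in C$, we get $y \in C$. Applying weak lower semicontinuity of the norm to the differences $P_C(x_{n_k}) - x_{n_k} \weakly y - x$ (here using $x_{n_k} \to x$ strongly) yields $\|y - x\| \le \liminf_k \|P_C(x_{n_k}) - x_{n_k}\| = \liminf_k \di_C(x_{n_k}) = \di_C(x)$. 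On the other hand $y \in C$ forces $\|y - x\| \ge \di_C(x)$, so $\|y - x\| = \di_C(x)$, and the Chebyshev property gives $y = P_C(x)$.

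Finally I would upgrade weak convergence to norm convergence, which is the one genuinely delicate point. The trick is to work with the translated vectors $v_k := P_C(x_{n_k}) - x_{n_k}$ rather than with $P_C(x_{n_k})$ directly: we have just shown $v_k \weakly P_C(x) - x$, while their norms satisfy $\|v_k\| = \di_C(x_{n_k}) \to \di_C(x) = \|P_C(x) - x\|$. Since in a Hilbert space weak convergence together with convergence of norms implies strong convergence (the Radon--Riesz property, immediate from the inner-product expansion of $\|v_k - (P_C(x)-x)\|^2$), we conclude $v_k \to P_C(x) - x$ in norm; adding $x_{n_k} \to x$ gives $P_C(x_{n_k}) \to P_C(x)$. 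This completes the subsequence argument and hence the proof.

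The main obstacle is precisely this last step: weak closedness and reflexivity deliver only a weak limit, so some additional Hilbert-space geometry is needed to recover norm continuity. The mechanism that makes it work is that the relevant norms $\|P_C(x_{n_k}) - x_{n_k}\| = \di_C(x_{n_k})$ converge — a consequence of the non-expansiveness of $\di_C$ — which is exactly what the Radon--Riesz property requires in order to convert weak convergence into strong convergence. I expect no other serious difficulty; the boundedness and weak-limit-identification steps are routine once the weak-closedness hypothesis is invoked.
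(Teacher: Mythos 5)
Your proof is correct. The paper does not prove this lemma itself---it only cites \cite[Proposition~4.5.8]{BorVan}---and your argument (boundedness of $P_C(x_n)$ via non-expansiveness of the distance function, weak sequential compactness in $H$, identification of the weak subsequential limit as $P_C(x)$ using weak closedness together with weak lower semicontinuity of the norm and the Chebyshev uniqueness, and finally the Radon--Riesz property to upgrade weak to norm convergence) is precisely the standard proof given in that reference, with no gaps.
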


\begin{theorem}
Let $C$ be a nonempty weakly closed subset of a Hilbert space $H$. Then $C$ is convex if and only if $C$ is a Chebyshev set.
\end{theorem}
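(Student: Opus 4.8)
The plan is to prove both implications, with the forward one routine and the reverse carrying the content.

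For ``$C$ convex $\Rightarrow$ $C$ Chebyshev'' I would argue in the standard Hilbert-space way: for fixed $x$, a minimizing sequence for $\di_C(x)$ is bounded and hence has a weakly convergent subsequence; its limit lies in $C$ by weak closedness and attains the distance by weak lower semicontinuity of the norm, giving existence, while uniqueness follows from the parallelogram law together with convexity of $C$. This direction is entirely routine.

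For ``$C$ Chebyshev $\Rightarrow$ $C$ convex'' the key object is
\[
 g(x) := \tfrac12\|x\|^2 - \tfrac12\di_C^2(x).
\]
First I would record the identity $\di_C^2(x)=\|x\|^2-\sup_{c\in C}\left(2\langle x,c\rangle-\|c\|^2\right)$, whence $g(x)=\sup_{c\in C}\left(\langle x,c\rangle-\tfrac12\|c\|^2\right)$ is a pointwise supremum of continuous affine functions, so $g$ is convex and lower semicontinuous by Fact~\ref{basic-prin}\ref{basic-prin:E1}. Since $\di_C$ is continuous (Fact~\ref{basic-prin}), $g$ is finite and continuous, hence proper, convex and lsc, so $g=g^{**}$.

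The crux is to show $g$ is everywhere G\^ateaux differentiable with $\nabla g=P_C$. Writing $p:=P_C(x)$, the affine minorant $z\mapsto\langle z,p\rangle-\tfrac12\|p\|^2$ is active at $x$, which gives $P_C(x)\in\partial g(x)$ and the lower estimate $g(x+h)-g(x)\ge\langle P_C(x),h\rangle$; writing $p':=P_C(x+h)$ and using $\di_C(x)\le\|x-p'\|$ yields the matching upper estimate $g(x+h)-g(x)\le\langle P_C(x+h),h\rangle$. Sandwiching these, replacing $h$ by $th$ and letting $t\to0^+$, the continuity of $P_C$ (Lemma~\ref{prop:reflexive-weakly-closed-Chebyshev}) forces $g'(x;h)=\langle P_C(x),h\rangle$, so $\partial g(x)=\{P_C(x)\}$ by Proposition~\ref{GatPC}(ii). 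This is the step where weak closedness is genuinely used (through continuity of $P_C$), and I expect it to be the main obstacle.

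To finish, note $P_C(c)=c$ for $c\in C$ and $\ran P_C\subseteq C$, so $\ran\partial g=\ran P_C=C$. Since $g$ is proper convex lsc, Fenchel--Young (Proposition~\ref{prop:Fenchel-Young-fd}) gives $x^*\in\partial g(x)\iff x\in\partial g^*(x^*)$, whence $\dom\partial g^*=\ran\partial g=C$; moreover $g\le\tfrac12\|\cdot\|^2$ yields $g^*\ge\tfrac12\|\cdot\|^2$ and $g^*\le\tfrac12\|\cdot\|^2+\iota_C$ yields $C\subseteq\dom g^*$. By the Br\o ndsted--Rockafellar theorem $\dom\partial g^*$ is dense in $\dom g^*$, so
\[
 C=\dom\partial g^*\subseteq\dom g^*\subseteq\overline{\dom g^*}=\overline{\dom\partial g^*}=\overline{C}=C,
\]
forcing $C=\dom g^*$, which is convex as the domain of a convex function. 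I would close by remarking that the naive shortcut ``$C=\{g^*=\tfrac12\|\cdot\|^2\}$ is a level set, hence convex'' fails, since this is a sublevel set of the difference $g^*-\tfrac12\|\cdot\|^2$ of convex functions and need not be convex; it is precisely the density (Br\o ndsted--Rockafellar) step that is essential and also handles lower-dimensional $C$.
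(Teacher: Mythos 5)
Your proof is correct, and it overlaps with the paper's in its central construction while diverging at both ends. For the forward implication the paper stays on-theme and derives proximality from Fenchel duality (Theorem~\ref{thm:Fenchel-duality-fd}) applied to $f(z)=-\langle x,z\rangle+\iota_{B_H}(z)$ and $g=\sigma_C$, reading off $-\di_C(x)$ as the attained dual value; your minimizing-sequence/weak-compactness argument is more elementary and equally valid. For the converse, both proofs hinge on the same function --- your $g$ is exactly $f^*$ for $f:=\tfrac12\|\cdot\|^2+\iota_C$ --- and on the same use of weak closedness through continuity of $P_C$ (Lemma~\ref{prop:reflexive-weakly-closed-Chebyshev}) to show $\partial f^*(x)=\{P_C(x)\}$ everywhere; your two-sided estimate on difference quotients is a clean alternative to the paper's sequential argument with $x_n=x+\tfrac1n(y-P_C(x))$. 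The real divergence is the endgame. The paper feeds the G\^ateaux differentiability of $f^*$ and the sequential weak lower semicontinuity of $f$ (a second, separate use of weak closedness) into Theorem~\ref{th:convex_conj_suff} to conclude that $f$ itself is convex, whence $C=\dom f$ is convex. You instead identify $C=\ran P_C=\ran\partial g=\dom\partial g^*$ and invoke Br\o ndsted--Rockafellar density to squeeze $\dom g^*$ between $C$ and $\overline{C}=C$, so that $C$ is the domain of the convex function $g^*$. Your route imports one external theorem not stated in the paper, but it uses weak closedness only once (through $P_C$), so it literally proves the sharper statement the paper only records as a remark afterwards: every Chebyshev set with continuous metric projection is convex. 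Your closing caveat about the failure of the level-set shortcut is also apt.
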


\begin{proof}
For the direct implication, we will begin by proving that $C$ is proximal. We can and do suppose that $0\in C$.
Pick any $x\in H$. Consider the convex and lsc functions $f(z):=-\langle x,z\rangle + \iota_{B_H}(z)$ and $g(z):=\sigma_C(z)$. Notice that $\bigcup_{\lambda>0} \lambda\left[\dom g - \dom f\right]=H$ (in fact $f$ is continuous at $0\in\dom f\cap\dom g$).
With the notation of Theorem~\ref{thm:Fenchel-duality-fd}, one has $p=d$, and the supremum of the dual problem is attained if finite. Since
$f^*(y)=\|x+y\|$ and $g^*(y)=\iota_{C}(y)$, as  $C$ is closed, the dual problem~\eqref{eq:Fenchel_Dual_P} takes the form
$$d=\sup_{y\in H} \{-\|x+y\|-\iota_C(-y)\}=-\di_C(x).$$
Choose any $c\in C$. Observe that $0\leq \di_C(x)\leq \|x-c\|$. Therefore the supremum must be attained, and $P_C(x)\neq\varnothing$. Uniqueness  follows easily from the convexity of $C$.

For the converse, consider the function $f:=\frac{1}{2}\|\cdot\|^2+\iota_C$. We first show that
\begin{equation}
\partial f^*(x) = \{P_C(x)\}, \ \mbox{for all}\ x \in H. \label{eq:Zal02-3.69}
\end{equation}
Indeed, for $x \in H$,
\begin{align*}
f^*(x) &= \sup_{y\in C}\left\{\langle x,y \rangle-\frac{1}{2}\langle y,y\rangle \right \}\\
&= \frac{1}{2}\langle x,x\rangle + \frac{1}{2}\sup_{y\in C}\left\{-\langle x,x\rangle +2\langle x,y \rangle-\langle y,y\rangle\right \}\\
&= \frac{1}{2}\|x\|^2-\frac{1}{2}\inf_{y\in C}\|x-y\|^2=\frac{1}{2}\|x\|^2-\frac{1}{2}d^2_C(x)\\
&=\frac{1}{2} \|x\|^2 - \frac{1}{2} \|x - P_C(x)\|^2  = \langle x,P_C(x) \rangle -\frac{1}{2} \|P_C(x)\|^2 \\
&= \langle x,P_C(x) \rangle - f(P_C(x)).
\end{align*}
Consequently,  by Proposition~\ref{prop:Fenchel-Young-fd}, $P_C(x) \in \partial f^*(x)$ for $x \in X$. Now suppose $y \in \partial f^*(x)$, and
define $x_n = x + \frac{1}{n} (y - P_C(x))$. Then $x_n \to x$, and hence
$P_C(x_n) \to P_C(x)$ by Lemma~\ref{prop:reflexive-weakly-closed-Chebyshev}. Using the
subdifferential inequality, we have
$$
0 \le \langle x_n - x, P_C(x_n) - y \rangle = \frac{1}{n} \langle y - P_C(x), P_C(x_n) - y \rangle.
$$
This now implies:
$$
0 \le \lim_{n \to \infty} \langle y - P_C(x), P_C(x_n) - y\rangle = - \|y - P_C(x)\|^2.
$$
Consequently, $y = P_C(x)$ and so (\ref{eq:Zal02-3.69}) is established.

Since $f^*$ is continuous and we just proved that $\partial f^*$ is a singleton, Proposition~\ref{GatPC} implies that $f^*$ is G\^ateaux differentiable. Now $-\infty < f^{**}(x)\leq f(x)=\frac{1}{2}\|x\|^2$ for all $x\in C$. Thus, $f^{**}$ is a proper function. One can easily check that $f$ is sequentially weakly lsc, $C$ being  weakly closed. Therefore, Theorem~\ref{th:convex_conj_suff} implies that $f$ is convex; whence, $\dom f=C$ must be convex.
\end{proof}

Observe that we have actually proved that \emph{every Chebyshev set with a continuous projection mapping is convex} (and closed).
We finish the section by recalling a simple but powerful ``hidden convexity" result.

\begin{remark}{(See \cite{BacBorw}.)}
Let $C$ be  a closed subset of a Hilbert space $H$. Then
there exists a continuous and convex function $f$ defined on $H$ such that
$\di_C^2(x) =\|x\|^2-f(x),\,\forall x\in H$.
Precisely, $f$ can be taken as $x\mapsto\sup_{c\in C}\{2\langle x,c\rangle-\|c\|^2\}$.
\end{remark}

\section{Monotone operator theory}\label{SecMon:1}
Let $A\colon X\rightrightarrows X^*$ be a \emph{set-valued operator}
(also known as a relation, point-to-set mapping or multifunction),
i.e., for every $x\in X$, $Ax\subseteq X^*$, and let $\gra A :=
\menge{(x,x^*)\in X\times X^*}{x^*\in Ax}$ be the \emph{graph} of
$A$. The \emph{domain} of $A$
  is $\dom A:= \menge{x\in X}{Ax\neq\varnothing}$ and
$\ran A:=A(X)$ is the \emph{range} of $A$. We say that $A$ is
\emph{monotone} if
\begin{equation}
\scal{x-y}{x^*-y^*}\geq 0,\quad \text{for all } (x,x^*),(y,y^*)\in\gra A,
\end{equation}
and \emph{maximally monotone} if $A$ is monotone and $A$ has no proper
monotone extension (in the sense of graph inclusion). Given $A$
monotone, we say that $(x,x^*)\in X\times X^*$ is
\emph{monotonically related to} $\gra A$ if
\begin{align*}
\langle x-y,x^*-y^*\rangle\geq0,\quad\text{for all } (y,y^*)\in\gra
A.\end{align*} Monotone operators have frequently shown themselves
to be a key class of objects in both modern Optimization and
Analysis; see, e.g., \cite{Bor1,Bor2,Bor3,BY2}, the books
\cite{BC2011,BorVan,BurIus,ph,Si,Si2,RW-1998,Zalinescu,Zeidler2A,Zeidler2B} and the references given therein.

Given sets $S\subseteq X$ and $D\subseteq X^*$, we define $S^\bot$
by $S^\bot := \{x^*\in X^*\mid\langle x^*,x\rangle= 0,\quad \forall
x\in S\}$ and $D_{\bot}$  by $D_\bot := \{x\in X\mid\langle
x,x^*\rangle= 0,\quad \forall x^*\in D\}$ \cite{PheSim}. Then the
\emph{adjoint} of $A$ is the operator $A^*:X^{**}\rightrightarrows
X^*$ such that
\begin{equation*}
\gra A^* :=
\menge{(x^{**},x^*)\in X^{**}\times X^*}{(x^*,-x^{**})\in(\gra A)^{\bot}}.
\end{equation*}
Note that the adjoint is always a \emph{linear relation}, i.e. its graph is a linear subspace.

The \emph{Fitzpatrick function} \cite{Fitz88} associated with an
operator $A$ is the function $F_A:X\times X^* \to \RX$ defined by
\begin{align}\label{defFA}F_A(x, x^*):= \sup_{(a,a^*)\in\gra A}\Big( \langle x,a^*\rangle+\langle a,x^*\rangle-\langle a,a^*\rangle\Big).
\end{align}
Fitzpatrick functions have been proved to be an important tool in modern
monotone operator theory. One of the main reasons is shown in the
following result.
\begin{fact}[Fitzpatrick]
\emph{(See (\cite[Propositions~3.2\&4.2, Theorem~3.4 and Corollary~3.9]{Fitz88}.)}
\label{f:Fitz}
Let $A\colon X\To X^*$ be monotone with $\dom A\neq\varnothing$.
Then $F_A$ is  proper lower semicontinuous in the norm $\times$ weak$^{*}$-topology $\omega (X^{*}, X)$, convex, and $F_A=\langle\cdot,\cdot\rangle$ on $\gra A$.
 Moreover, if $A$ is maximally monotone,
for every $(x,x^*)\in X\times X^*$, the inequality
$$\scal{x}{x^*}\leq F_A(x,x^*)\leq F^*_A(x^*,x)$$ is true,
and the first equality holds if and only if $(x,x^*)\in\gra A$.
\end{fact}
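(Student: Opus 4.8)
The plan is to verify each assertion of Fact~\ref{f:Fitz} directly from the definition \eqref{defFA}, exploiting the fact that $F_A$ is expressed as a supremum of the affine continuous functions $(x,x^*)\mapsto \langle x,a^*\rangle+\langle a,x^*\rangle-\langle a,a^*\rangle$ indexed by $(a,a^*)\in\gra A$. Convexity and lower semicontinuity in the norm $\times$ weak$^*$ topology are then immediate from Fact~\ref{basic-prin}\ref{basic-prin:E1}, since each indexed function is linear and continuous on $X\times X^*$ endowed with that topology, and a pointwise supremum of such functions inherits both properties. Properness will follow once we know $F_A\geq\langle\cdot,\cdot\rangle$, which forces $F_A>-\infty$ everywhere, together with the observation that $F_A$ is finite at points of $\gra A$ (established next).

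First I would prove the lower bound $\scal{x}{x^*}\leq F_A(x,x^*)$ and the identity $F_A=\langle\cdot,\cdot\rangle$ on $\gra A$ simultaneously. For arbitrary $(x,x^*)$ and any $(a,a^*)\in\gra A$, a short rearrangement gives
$$\langle x,a^*\rangle+\langle a,x^*\rangle-\langle a,a^*\rangle=\langle x,x^*\rangle-\langle x-a,x^*-a^*\rangle.$$
Taking the supremum over $(a,a^*)\in\gra A$ yields
$$F_A(x,x^*)=\langle x,x^*\rangle-\inf_{(a,a^*)\in\gra A}\langle x-a,x^*-a^*\rangle.$$
If $(x,x^*)\in\gra A$, monotonicity of $A$ makes every term $\langle x-a,x^*-a^*\rangle\geq0$, so the infimum is at most $0$ (take $(a,a^*)=(x,x^*)$ to see it equals $0$), giving $F_A(x,x^*)=\langle x,x^*\rangle$; this also confirms finiteness on $\gra A$. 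For general $(x,x^*)$ with $A$ merely monotone the lower bound $F_A\geq\langle\cdot,\cdot\rangle$ need not hold, but it does hold under maximal monotonicity, which is exactly the hypothesis for the displayed chain of inequalities: if $(x,x^*)$ is \emph{not} monotonically related to $\gra A$ then the infimum above is negative and $F_A(x,x^*)>\langle x,x^*\rangle$, while if it \emph{is} so related then the infimum is $\geq0$ and $F_A(x,x^*)\leq\langle x,x^*\rangle$; combining this with the defining property that a maximally monotone operator contains every pair monotonically related to it gives both the lower bound and the characterization of equality in terms of membership in $\gra A$.

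The remaining inequality $F_A(x,x^*)\leq F_A^*(x^*,x)$ is where the real work lies, and I expect it to be the main obstacle. The natural approach is to compute the Fenchel conjugate $F_A^*$ on $X^*\times X^{**}$ and evaluate it at the point $(x^*,x)$ (using the canonical embedding $x\in X\subseteq X^{**}$). By definition,
$$F_A^*(x^*,x)=\sup_{(y,y^*)\in X\times X^*}\big\{\langle x^*,y\rangle+\langle x,y^*\rangle-F_A(y,y^*)\big\}.$$
Restricting the supremum to the diagonal choices $(y,y^*)=(a,a^*)\in\gra A$, where $F_A(a,a^*)=\langle a,a^*\rangle$ by the identity just proved, gives
$$F_A^*(x^*,x)\geq\sup_{(a,a^*)\in\gra A}\big\{\langle x^*,a\rangle+\langle x,a^*\rangle-\langle a,a^*\rangle\big\}=F_A(x,x^*),$$
the last equality being precisely the definition \eqref{defFA}. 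Thus the inequality drops out from a clever but elementary restriction of the conjugate's defining supremum, with no duality machinery required; the only subtlety is keeping the pairings between $X$, $X^*$ and $X^{**}$ straight under the identification of $X$ with its image in $X^{**}$. This completes all the claims, and I would close by noting that properness is now fully justified, since $F_A$ is finite on the nonempty set $\gra A$ and bounded below by $\langle\cdot,\cdot\rangle$.
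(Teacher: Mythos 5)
Your argument is correct in substance, and since the paper states this result as a Fact quoted from Fitzpatrick's 1988 paper without supplying a proof, there is nothing internal to compare it against; what you have written is essentially the standard proof. The rearrangement $F_A(x,x^*)=\langle x,x^*\rangle-\inf_{(a,a^*)\in\gra A}\langle x-a,x^*-a^*\rangle$ is exactly the right lens: it yields $F_A=\langle\cdot,\cdot\rangle$ on $\gra A$ from monotonicity alone, and it converts both the lower bound and the equality characterization into the statement that, for a maximally monotone $A$, a pair is monotonically related to $\gra A$ if and only if it belongs to $\gra A$. Restricting the conjugate's defining supremum to $\gra A$ to get $F_A\leq F_A^{*\intercal}$ is likewise the standard device, and as you in effect observe it requires only monotonicity, not maximality. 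The one genuine wrinkle is properness: in your opening paragraph you propose to deduce $F_A>-\infty$ from the bound $F_A\geq\langle\cdot,\cdot\rangle$, yet you correctly note later that this bound can fail for merely monotone $A$ --- which is precisely the setting in which properness is asserted. The repair is one line and you should make it explicit: since $\gra A\neq\varnothing$, the function $F_A$ is a pointwise supremum over a nonempty index set of real-valued affine functions, so $F_A(x,x^*)\geq\langle x,a^*\rangle+\langle a,x^*\rangle-\langle a,a^*\rangle>-\infty$ for any fixed $(a,a^*)\in\gra A$; combined with the finiteness on $\gra A$ you already established, this gives properness with no appeal to maximality. With that adjustment the proof is complete.
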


The next result is central to maximal monotone operator theory and algorithmic analysis. Originally it was proved by more extended direct methods than the concise convex analysis argument we present next.

\begin{theorem}[Local boundedness]\emph{(See \cite[Theorem~2.2.8]{ph}.)}
\label{pheps:11}Let $A:X\To X^*$ be  monotone with $\inte\dom A\neq\varnothing$.
Then $A$ is locally bounded at $x\in\inte\dom A$, i.e., there exist $\delta>0$ and $K>0$ such that
\begin{align*}\sup_{y^*\in Ay}\|y^*\|\leq K,\quad \forall y\in x+\delta B_X.
\end{align*}
\end{theorem}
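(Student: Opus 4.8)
The plan is to argue by contradiction, manufacturing from the failure of local boundedness a single convex function whose continuity at $\bx$ we can control, and then to extract a contradiction by probing that function. Suppose $A$ is not locally bounded at some $\bx\in\inte\dom A$. Then there exist $(x_n,x_n^*)\in\gra A$ with $x_n\to\bx$ and $\|x_n^*\|\to\infty$; in particular the sequence $(x_n)$ is bounded, say $\|x_n\|\le M_0$. Fix $r>0$ with $\bx+rB_X\subseteq\dom A$, and define
$$f(x):=\sup_{n\in\NN}\langle x_n^*,x-x_n\rangle.$$
Being a pointwise supremum of continuous affine functions, $f$ is convex and lower semicontinuous by Fact~\ref{basic-prin}\ref{basic-prin:E1}, and it never takes the value $-\infty$. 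The goal is to show $f$ is finite, hence continuous, near $\bx$, and then to contradict $\|x_n^*\|\to\infty$ by evaluating $f$ at well-chosen points.

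The only step with genuine content — and the one I expect to be the main obstacle — is the finiteness of $f$ on $\dom A$; this is exactly where monotonicity is indispensable. Given $x\in\dom A$, pick $x^*\in Ax$. Applying monotonicity to the pairs $(x_n,x_n^*)$ and $(x,x^*)$ gives $\langle x_n-x,x_n^*-x^*\rangle\ge0$, whence $\langle x_n^*,x-x_n\rangle\le\langle x^*,x-x_n\rangle\le\langle x^*,x\rangle+\|x^*\|\,M_0$. The right-hand side is independent of $n$, so $f(x)\le\langle x^*,x\rangle+\|x^*\|\,M_0<\infty$. The point to appreciate is that taking the supremum over the whole graph (as in the Fitzpatrick slice $F_A(\cdot,0)$) need not be finite near $\bx$; it is essential to restrict to the \emph{bounded} sequence $(x_n)$ so that monotonicity can trade each exploding functional $x_n^*$ for the single fixed $x^*$. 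Consequently $\bx+rB_X\subseteq\dom f$, so $\bx\in\inte\dom f$. Since $f$ is proper, convex and lower semicontinuous, Fact~\ref{basic-prin}\ref{basic-prin:E4}(b) guarantees that $f$ is continuous at $\bx$; completeness of $X$ enters the argument only through this citation. Thus I may choose $\delta\in(0,r/2]$ and $M\ge0$ with $f\le M$ on $\bx+2\delta B_X$.

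Finally I would close the loop. For each $n$ choose, by definition of the dual norm, a unit vector $w_n\in X$ with $\langle x_n^*,w_n\rangle\ge\tfrac12\|x_n^*\|$. For all large $n$ we have $\|x_n-\bx\|\le\delta$, so the point $x_n+\delta w_n$ lies in $\bx+2\delta B_X$ and hence $f(x_n+\delta w_n)\le M$. On the other hand, keeping only the single index $n$ in the supremum defining $f$,
$$f(x_n+\delta w_n)\ge\langle x_n^*,\delta w_n\rangle=\delta\langle x_n^*,w_n\rangle\ge\tfrac{\delta}{2}\|x_n^*\|.$$
Combining the two estimates yields $\|x_n^*\|\le 2M/\delta$ for all large $n$, contradicting $\|x_n^*\|\to\infty$. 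This contradiction shows that $A$ is locally bounded at $\bx$, and indeed the argument exhibits explicit constants, with radius $\delta$ and bound $K=2M/\delta$.
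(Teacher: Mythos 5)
Your proof is correct and follows essentially the same route as the paper's: both build a convex lower semicontinuous function as a supremum of affine functions $\langle a^*,\cdot-a\rangle$ over a bounded portion of the graph, use monotonicity to verify finiteness on a neighbourhood of $\bx$, invoke the automatic continuity of a proper lsc convex function on the interior of its domain, and then test the resulting upper bound at points of the form $a+\delta w$ to control $\|a^*\|$. The only differences are cosmetic: the paper argues directly, taking the supremum over all graph points with $\|a\|\le 1$ and extracting the explicit bound $K=1/\delta$, whereas you argue by contradiction along a single unbounded sequence.
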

\begin{proof} Let $x\in\inte\dom A$.
After translating the graphs if necessary,
we can and do suppose that $x=0$ and $(0,0)\in\gra A$.
Define $f:X\rightarrow\RX$ by
\begin{align*}
y\mapsto\sup_{(a,a^*)\in\gra A,\,\|a\|\leq1}\langle y-a, a^*\rangle.
\end{align*}
By Fact~\ref{basic-prin}\ref{basic-prin:E1},
$f$ is convex and lower semicontinuous.
Since $0\in\inte\dom A$, there exists $\delta_1>0$ such that
$\delta_1 B_X\subseteq\dom A$. Now we show that $
\delta_1 B_X\subseteq\dom f$.  Let $ y\in\delta_1 B_X$ and $y^*\in Ay$.
Thence, we have
\begin{align*}
&\langle y-a,y^*-a^*\rangle\geq0,\quad\forall (a,a^*)\in\gra A,\,\|a\|\leq1\\
&\Rightarrow \langle y-a,y^*\rangle\geq\langle y-a,a^*\rangle,\quad\forall (a,a^*)\in\gra A,\,\|a\|\leq1\\
&\Rightarrow +\infty >(\|y\|+1)\cdot\|y^*\|\geq\langle y-a,a^*\rangle,\quad\forall (a,a^*)\in\gra A,\,\|a\|\leq1\\
&\Rightarrow f(y)<+\infty\quad\Rightarrow y\in\dom f.
\end{align*}
Hence  $\delta_1 B_X\subseteq\dom f$ and thus $0\in\inte\dom f$. By
Fact~\ref{basic-prin}\ref{basic-prin:E4},
 there is $\delta>0$ with $\delta\leq\min\{\frac{1}{2},\frac{1}{2}\delta_1\}$  such that
 \begin{align*}
 f(y)\leq f(0)+1,\quad\forall y\in2\delta B_X.
 \end{align*}
 Now we show that $f(0)=0$.
 Since $(0,0)\in\gra A$, then $f(0)\geq0$. On the other hand, by the monotonicity of $A$,
 $\langle a, a^*\rangle=\langle a-0, a^*-0\rangle\geq0$ for every $(a,a^*)\in\gra A$.
 Then we have $f(0)=\sup_{(a,a^*)\in\gra A,\,\|a\|\leq1}\langle 0-a, a^*\rangle\leq0$.
 Thence $f(0)=0$.

 Thus,
 $$\langle y,a^*\rangle\leq \langle a, a^*\rangle+1,\quad\forall y\in2\delta B_X, (a,a^*)\in\gra A,\,
\|a\|\leq\delta,$$
whence, taking the supremum with $y\in2\delta B_X$,
\begin{align*}
&2\delta\|a^*\|\leq \|a\|\cdot\|a^*\|+1\leq \delta\|a^*\|+1,\quad \forall (a,a^*)\in\gra A,\,
a\in\delta B_X \\
&\Rightarrow\|a^*\|\leq\frac{1}{\delta},\quad\forall (a,a^*)\in\gra A,\,
a\in\delta B_X.
 \end{align*}
 Setting $K:=\frac{1}{\delta}$, we get the desired result.
 \end{proof}

Generalizations of Theorem~\ref{pheps:11}  can be found in \cite{Si2,BorFitz} and \cite[Lemma~4.1]{BY3}.
\subsection{Sum theorem and Minty surjectivity theorem}

In the early 1960s, Minty \cite{Min62} presented an important
characterization of maximally monotone operators in a Hilbert space;
which we now reestablish.
The proof we give of Theorem~\ref{MonMinty} is due to Simons and
Z{\u{a}}linescu \cite[Theorem~1.2]{SimZal}. We denote by $\Id$ the
\emph{identity mapping} from $H$ to $H$.
\begin{theorem}[Minty]\label{MonMinty}
Suppose that $H$ is a Hilbert space. Let $A:H\rightrightarrows H$ be
monotone.  Then $A$ is maximally monotone if and only if $\ran
(A+\Id)=H$.
\end{theorem}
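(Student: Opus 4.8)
The plan is to prove the Minty theorem by establishing both directions, with the surjectivity of $A+\Id$ being the substantive content. For the easy direction (``$\Leftarrow$''), suppose $\ran(A+\Id)=H$ and let $(x,x^*)$ be monotonically related to $\gra A$. Since $A+\Id$ is surjective, there exists $(y,y^*)\in\gra A$ with $y^*+y=x^*+x$. Monotonicity then gives $\langle x-y,x^*-y^*\rangle\geq 0$, and substituting $x^*-y^*=-(x-y)$ yields $-\|x-y\|^2\geq 0$, forcing $x=y$ and hence $x^*=y^*$, so $(x,x^*)\in\gra A$. This shows $A$ has no proper monotone extension, i.e. $A$ is maximally monotone.

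For the hard direction (``$\Rightarrow$''), suppose $A$ is maximally monotone; I would show $\ran(A+\Id)=H$. The strategy, following the convex-analytic spirit of the excerpt, is to use the Fitzpatrick function $F_A$. Fix $z\in H$; the goal is to produce $(x,x^*)\in\gra A$ with $x+x^*=z$. The key idea is to minimize a suitable coercive convex function built from $F_A$. Specifically, I would consider the function
$$
(x,x^*)\mapsto F_A(x,x^*)+\tfrac{1}{2}\|x\|^2+\tfrac{1}{2}\|x^*\|^2-\langle x,z\rangle,
$$
or an equivalent completed-square form, and exploit the bound $\langle x,x^*\rangle\leq F_A(x,x^*)$ from Fact~\ref{f:Fitz} together with Young's inequality (Example~\ref{prop:Young}) to show this function is coercive on $H\times H$ and bounded below. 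Using that $F_A$ is proper, convex, and lower semicontinuous (Fact~\ref{f:Fitz}), a minimizer exists by a weak-compactness/coercivity argument in the Hilbert space $H\times H$.

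At the minimizer $(\bar x,\bar x^*)$, the optimality condition $0\in\partial(\cdots)(\bar x,\bar x^*)$ together with the Fenchel--Young relation $F_A(\bar x,\bar x^*)\geq\langle\bar x,\bar x^*\rangle$ and the conjugate inequality $F_A(\bar x,\bar x^*)\leq F_A^*(\bar x^*,\bar x)$ should force equality $F_A(\bar x,\bar x^*)=\langle\bar x,\bar x^*\rangle$. By the last clause of Fact~\ref{f:Fitz}, this equality is equivalent to $(\bar x,\bar x^*)\in\gra A$. A direct computation of the first-order conditions should simultaneously yield $\bar x+\bar x^*=z$, completing the proof that $z\in\ran(A+\Id)$.

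The main obstacle will be executing the variational argument cleanly: I must verify coercivity of the auxiliary function carefully (the cross term $\langle x,x^*\rangle$ hidden inside $F_A$ interacts with the quadratic penalties, and getting the right completed square is delicate), confirm that a minimizer genuinely exists via sequential weak lower semicontinuity on $H\times H$, and then translate the abstract optimality condition into the two concrete conclusions $(\bar x,\bar x^*)\in\gra A$ and $\bar x+\bar x^*=z$. The Simons--Z\u{a}linescu approach presumably streamlines this by choosing the penalization so that the equality case of the Fitzpatrick inequality and the range condition fall out together; identifying the exact convex function that makes both happen at once is the crux.
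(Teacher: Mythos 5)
Your ``$\Leftarrow$'' direction is complete and coincides with the paper's argument. Your ``$\Rightarrow$'' direction follows the same Simons--Z\u{a}linescu strategy the paper uses --- minimize a coercive quadratic perturbation of the Fitzpatrick function and extract a graph point from the first-order condition --- but the step you defer as ``the crux'' is a genuine gap, and the auxiliary function you actually display does not close it. The quadratic penalty on $x^*$ must be centered at $z$, not at $0$: the correct tilted function is $F_A(x,x^*)+\tfrac12\|x\|^2+\tfrac12\|x^*-z\|^2-\langle x,z\rangle$, which is exactly what you get by first translating the graph ($\gra B:=\gra A-\{(0,z)\}$) and minimizing $F_B+\tfrac12\|\cdot\|^2+\tfrac12\|\cdot\|^2$, as the paper does. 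With your function, the optimality condition gives $(z-\bar x,-\bar x^*)\in\partial F_A(\bar x,\bar x^*)$, and combining the resulting Fenchel--Young equality with the two bounds $F_A(x,x^*)\ge\langle x,x^*\rangle$ and $F_A^*(x^*,x)\ge\langle x,x^*\rangle$ from Fact~\ref{f:Fitz} yields only $\|\bar x+\bar x^*\|^2\le\langle\bar x+\bar x^*,z\rangle$; this forces neither $F_A(\bar x,\bar x^*)=\langle\bar x,\bar x^*\rangle$ nor $\bar x+\bar x^*=z$. (A second, smaller issue: coercivity should come from the affine minorant of Fact~\ref{basic-prin}\ref{basic-prin:E5}, since $\langle x,x^*\rangle\le F_A(x,x^*)$ only bounds the perturbed function below by $\tfrac12\|x+x^*\|^2$, which does not blow up along $x^*=-x$.)

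The paper's resolution of the crux is worth noting because it differs slightly from the Fenchel--Young route you sketch. After translating so that the target is $0\in\ran(B+\Id)$, it takes the subgradient relation $(-\bar x,-\bar x^*)\in\partial F_B(\bar x,\bar x^*)$ and tests the subdifferential inequality only against points $(b,b^*)\in\gra B$, where $F_B(b,b^*)=\langle b,b^*\rangle$ by Fact~\ref{f:Fitz}; completing the square gives $\langle b+\bar x^*,b^*+\bar x\rangle\ge\|\bar x+\bar x^*\|^2\ge0$ for all $(b,b^*)\in\gra B$, so the \emph{swapped} point $(-\bar x^*,-\bar x)$ is monotonically related to $\gra B$. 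A second appeal to maximal monotonicity places it in $\gra B$, and substituting that particular $(b,b^*)$ back into the same inequality collapses everything to $0\le-\|\bar x+\bar x^*\|^2$, whence $\bar x^*=-\bar x$ and $(\bar x,-\bar x)\in\gra B$, i.e.\ $z\in(A+\Id)\bar x$. With the correctly centered function your more direct route also works --- Fenchel--Young equality plus the two Fitzpatrick inequalities gives $\|\bar x+\bar x^*\|^2\le 0$ at once, and the forced equality $F_B(\bar x,\bar x^*)=\langle\bar x,\bar x^*\rangle$ puts the minimizer itself in the graph --- but identifying that centering, or equivalently the swap-and-reinvoke-maximality step, is precisely the idea missing from the proposal.
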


\begin{proof}
``$\Rightarrow$":  Fix any $x^*_0\in H$, and let $B:H\rightrightarrows H$
be given by $\gra B:= \gra A-\{(0,x^*_0)\}$. Then $B$ is maximally
monotone. Define $F:H\times H\rightarrow\RX$ by
\begin{align}
(x, x^*)\mapsto F_B(x,x^*)+\frac{1}{2}\|x||^2+\frac{1}{2}\|x^*||^2.
\end{align}
Fact~\ref{f:Fitz} together with
Fact~\ref{basic-prin}\ref{basic-prin:E5} implies that $F$ is
coercive.  By \cite[Theorem~2.5.1(ii)]{Zalinescu}, $F$ has a
minimizer. Assume that $(z, z^*)\in H\times H$ is a minimizer of
$F$.  Then we have $(0,0)\in \partial F(z,z^*)$.  Thus, $ (0,0)\in
\partial F_B(z,z^*)+(z,z^*) $ and $(-z,-z^*)\in \partial F_B(z,z^*)$. Then
$$\big\langle (-z,-z^*), (b,b^*)-(z,z^*)\big\rangle\leq
F_B(b,b^*)-F_B(z,z^*),\quad\forall (b,b^*)\in\gra B,$$
and by Fact~\ref{f:Fitz},
$$ \big\langle (-z,-z^*), (b,b^*)-(z,z^*)\big\rangle\leq
\langle b,b^*\rangle-\langle z,z^*\rangle,\quad\forall
(b,b^*)\in\gra B;$$ that is,
\begin{equation}
0\leq
\langle b,b^*\rangle-\langle z,z^*\rangle+\langle z,b\rangle+\langle z^*,b^*\rangle-\|z\|^2-\|z^*\|^2,\quad\forall (b,b^*)\in\gra B.\label{Mint:1}
\end{equation}
Hence,
$$ \big\langle b+z^*, b^*+z\big\rangle=\langle b,b^*\rangle+\langle z,b\rangle+\langle z^*,b^*\rangle+\langle z,z^*\rangle\geq\|z+z^*\|^2\geq 0,\quad\forall (b,b^*)\in\gra B,$$
which implies that $(-z^*,-z)\in\gra B$, since $B$ is maximally
monotone. This combined with~\eqref{Mint:1} implies $0\leq -2
\langle z,z^*\rangle-\|z\|^2-\|z^*\|^2$. Then we have $z=-z^* $, and
$(z,-z)=(-z^*,-z)\in\gra B$, whence $(z,-z)+(0,x^*_0)\in\gra A$.
Therefor $x^*_0\in Az+z$, which implies $x^*_0\in\ran (A+\Id)$.

``$\Leftarrow$":   Let $(v,v^*)\in H\times H$ be monotonically
related to $\gra A$. Since $\ran (A+\Id)=H$, there exists
$(y,y^*)\in\gra A$ such that $v^*+v=y^*+y$. Then we have
\begin{align*}-\|v-y\|^2=
\big\langle v-y, y^*+y-v-y^*\big\rangle=
\big\langle v-y, v^*-y^*\big\rangle\geq0.
\end{align*}
Hence $v=y$, which also implies $v^*=y^*$.  Thus $(v,v^*)\in\gra A$, and therefore
$A$ is maximally monotone.
\end{proof}

\begin{remark}
The extension of Minty's theorem to reflexive spaces (in which case it asserts the surjectivity of $A+J_X$ for the normalized duality mapping $J_X$ defined below) was originally proved by Rockafellar. The proof given in \cite[Proposition~3.5.6, page~119]{BorVan} which uses Fenchel's duality theorem more directly than the one we gave here,  is only slightly more
complicated than that of Theorem~\ref{MonMinty}.
\end{remark}

Let $A$ and $B$ be maximally monotone operators from $X$ to $X^*$.
Clearly, the \emph{sum operator} $A+B\colon X\To X^*\colon x\mapsto
Ax+Bx: = \menge{a^*+b^*}{a^*\in Ax\;\text{and}\;b^*\in Bx}$ is
monotone. Rockafellar established the following  important result in
1970 \cite{Rock70}, the so-called ``sum theorem'': Suppose that $X$
is  reflexive. If $\dom A\cap\inte\dom B\neq\varnothing$, then $A+B$
is maximally monotone. We can weaken this constraint qualification
to be that $\bigcup_{\lambda>0} \lambda\left[\dom A-\dom B\right]$
is a closed subspace (see \cite{AttRiaThe,Si2,SiZ,BorVan,AlDa}).

We  turn to a
new proof of this generalized result. To this end, we need the
following fact along with the definition of the partial
inf-convolution. Given two real Banach spaces $X,Y$ and $F_1,
F_2\colon X\times Y\rightarrow\RX$, the \emph{partial
inf-convolution} $F_1\Box_2 F_2$ is the function defined on $X\times
Y$ by
\begin{equation*}F_1\Box_2 F_2\colon
(x,y)\mapsto \inf_{v\in Y} \big\{F_1(x,y-v)+F_2(x,v)\big\}.
\end{equation*}.

\begin{fact}[Simons and Z\u{a}linescu]
\emph{(See \cite[Theorem~4.2]{SiZ} or
\cite[Theorem~16.4(a)]{Si2}.)}\label{F4} Let $X, Y$ be real Banach
spaces and $F_1, F_2\colon X\times Y \to \RX$ be proper lower
semicontinuous and convex bifunctionals. Assume that for every
$(x,y)\in X\times Y$,
\begin{equation*}(F_1\Box_2 F_2)(x,y)>-\infty
\end{equation*}
and that  $\bigcup_{\lambda>0} \lambda\left[P_X\dom F_1-P_X\dom F_2\right]$
is a closed subspace of $X$. Then for every $(x^*,y^*)\in X^*\times Y^*$,
\begin{equation*}
(F_1\Box_2 F_2)^*(x^*,y^*)=\min_{u^*\in X^*}
\left\{F_1^*(x^*-u^*,y^*)+F_2^*(u^*,y^*)\right\}.
\end{equation*}\end{fact}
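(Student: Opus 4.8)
The plan is to lift the identity to the product space $Z:=X\times Y\times Y$ and reduce the claim to the conjugate-of-a-sum formula under a closed-subspace constraint qualification (the Attouch--Br\'ezis theorem, i.e. the refinement of Corollary~\ref{infc} in which condition~\eqref{eq:Fen-dual-1} is relaxed from ``$\bigcup_\lambda\lambda[\dom g-T\dom f]=Y$'' to ``$\bigcup_\lambda\lambda[\dom g-T\dom f]$ is a closed subspace''). Define $G_1,G_2\colon Z\to\RX$ by $G_1(x,w,v):=F_1(x,w)$ and $G_2(x,w,v):=F_2(x,v)$; as compositions of $F_1,F_2$ with coordinate projections, both are proper, convex and lower semicontinuous. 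The strategy is: (i) recognise $(F_1\Box_2F_2)^*(x^*,y^*)$ as the value of $(G_1+G_2)^*$ at the diagonal point $(x^*,y^*,y^*)$; (ii) verify that the hypothesised constraint qualification is exactly the one needed to apply the conjugate-of-a-sum theorem to $G_1+G_2$ on $Z$; and (iii) read off the announced $\min$ by computing $G_1^*,G_2^*$ explicitly.

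For step (i), writing $y=w+v$ and using $-\inf=\sup(-\,\cdot\,)$ gives, for all $(x^*,y^*)$,
\begin{equation*}
(F_1\Box_2F_2)^*(x^*,y^*)=\sup_{x,w,v}\big\{\langle x^*,x\rangle+\langle y^*,w\rangle+\langle y^*,v\rangle-F_1(x,w)-F_2(x,v)\big\}=(G_1+G_2)^*(x^*,y^*,y^*),
\end{equation*}
an identity that holds with no hypotheses. A routine computation of the partial suprema shows $G_1^*(x^*,w^*,v^*)=F_1^*(x^*,w^*)$ if $v^*=0$ and $+\infty$ otherwise, and symmetrically $G_2^*(x^*,w^*,v^*)=F_2^*(x^*,v^*)$ if $w^*=0$ and $+\infty$ otherwise. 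Consequently, evaluating the infimal convolution $G_1^*\Box G_2^*$ at the diagonal point $(x^*,y^*,y^*)$ forces the ``$c^*$'' component of the first term and the ``$y^*-b^*$'' component of the second to vanish, and collapses to $\min_{u^*\in X^*}\{F_1^*(x^*-u^*,y^*)+F_2^*(u^*,y^*)\}$, which is precisely the right-hand side to be established.

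The crux is step (ii) together with the attainment of the minimum, both supplied by the conjugate-of-a-sum theorem once its constraint qualification is checked on $Z$. Since $v$ is free in $G_1$ and $w$ is free in $G_2$, one computes $\dom G_1-\dom G_2=(P_X\dom F_1-P_X\dom F_2)\times Y\times Y$, whence $\bigcup_{\lambda>0}\lambda[\dom G_1-\dom G_2]=\big(\bigcup_{\lambda>0}\lambda[P_X\dom F_1-P_X\dom F_2]\big)\times Y\times Y$; this is a closed subspace of $Z$ if and only if $\bigcup_{\lambda>0}\lambda[P_X\dom F_1-P_X\dom F_2]$ is a closed subspace of $X$, i.e. exactly under the stated hypothesis. (This subspace condition in particular forces $0\in P_X\dom F_1-P_X\dom F_2$, so the projected domains meet and $G_1+G_2$ is proper; the assumption $(F_1\Box_2F_2)(x,y)>-\infty$ guarantees that the left-hand side is the conjugate of a genuine proper convex function rather than a degenerate object.) Applying the theorem yields $(G_1+G_2)^*=G_1^*\Box G_2^*$ with the infimal convolution \emph{exact}, and evaluating at $(x^*,y^*,y^*)$ combines (i) with the computation of step (iii) to give the result. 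I expect the only real difficulty to be the attainment under the weak (subspace) constraint qualification: this is not available from the form of Fenchel duality proved earlier, where \eqref{eq:Fen-dual-1} requires the difference cone to be all of $Y$, and is precisely the content one imports from Attouch--Br\'ezis / Simons--Z\u{a}linescu.
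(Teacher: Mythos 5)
The paper offers no proof of Fact~\ref{F4} to compare against: it is imported verbatim from Simons--Z\u{a}linescu and Simons' book as a citable ingredient for the sum theorem. Judged on its own, your argument is correct and is essentially the standard derivation. The lift to $Z=X\times Y\times Y$ with $G_1(x,w,v)=F_1(x,w)$, $G_2(x,w,v)=F_2(x,v)$ is sound ($G_1,G_2$ are proper, convex and lsc as compositions with continuous projections); the identity $(F_1\Box_2F_2)^*(x^*,y^*)=(G_1+G_2)^*(x^*,y^*,y^*)$ is an unconditional exchange of suprema; the computation of $G_1^*,G_2^*$ and the collapse of $(G_1^*\Box G_2^*)(x^*,y^*,y^*)$ to $\inf_{u^*}\{F_1^*(x^*-u^*,y^*)+F_2^*(u^*,y^*)\}$ is right; and the verification that $\bigcup_{\lambda>0}\lambda[\dom G_1-\dom G_2]=\bigl(\bigcup_{\lambda>0}\lambda[P_X\dom F_1-P_X\dom F_2]\bigr)\times Y\times Y$ correctly transfers the constraint qualification, while your remark that a nonempty closed subspace contains $0$ (so the projected domains meet and $G_1+G_2$ is proper) closes the only loophole in applying the sum-conjugate formula. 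Two caveats. First, all the weight rests on the Attouch--Br\'ezis theorem, which the paper nowhere establishes --- Theorem~\ref{thm:Fenchel-duality-fd} and Corollary~\ref{infc} only cover the case where the difference cone in \eqref{eq:Fen-dual-1} is all of the target space --- so your proof is a reduction to a result of comparable depth to the one being proved; you flag this honestly, and it is acceptable here precisely because the paper itself treats the statement as quotable. Second, the hypothesis $(F_1\Box_2F_2)>-\infty$ is never actually used in your route (your step (i) holds unconditionally and Attouch--Br\'ezis needs only properness of $G_1+G_2$, which you derive from the subspace condition); it is there in the cited sources to make $F_1\Box_2F_2$ itself a proper convex function, which matters for biconjugate arguments but not for the conjugate identity, so your parenthetical gloss on its role is the one slightly loose spot.
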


We  denote  by $J_X$ \emph{the duality map} from $X$ to $X^*$, which
will be simply written as $J$, i.e., the subdifferential of the
function $\tfrac{1}{2}\|\cdot\|^2$. Let  $F\colon X\times
Y\rightarrow\RX$ be a bifunctional defined on two real Banach
spaces. Following the notation by Penot \cite{Penot2} we set
\begin{equation}
F^\intercal\colon Y\times X\colon (y,x)\mapsto F(x,y).
\end{equation}

\begin{theorem}[Sum theorem]\label{MonSum1}
Suppose that $X$ is reflexive. Let $A, B:X\rightrightarrows X$ be
maximally monotone.  Assume that $\bigcup_{\lambda>0}
\lambda\left[\dom A-\dom B\right]$ is a closed subspace.   Then
$A+B$ is maximally monotone.
\end{theorem}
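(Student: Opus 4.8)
The plan is to prove maximal monotonicity of $A+B$ by showing that any pair monotonically related to $\gra(A+B)$ already lies in it. After translating the graphs (replacing $Ax$ by $A(x+z)-z^*$ and $Bx$ by $B(x+z)$) I may assume the monotonically related pair is $(0,0)$; this leaves $\dom A-\dom B$ unchanged, so the constraint qualification survives and the goal becomes $0\in(A+B)(0)$. The engine is the pair of Fitzpatrick functions $F_A,F_B\colon X\times X^*\to\RX$ from \eqref{defFA} and their partial inf-convolution $G:=F_A\Box_2 F_B$, i.e.\ $G(x,x^*)=\inf_{v^*\in X^*}\{F_A(x,x^*-v^*)+F_B(x,v^*)\}$. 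Fact~\ref{f:Fitz} gives $F_A\ge\scal{\cdot}{\cdot}$ and $F_B\ge\scal{\cdot}{\cdot}$, so splitting $x^*=(x^*-v^*)+v^*$ yields $G\ge\scal{\cdot}{\cdot}$ everywhere; in particular $G>-\infty$, one of the hypotheses of Fact~\ref{F4}. Using reflexivity to identify $X^{**}=X$, the same inequality applied to $F_A^*,F_B^*$ (so that $F_A^*(x^*,x)\ge F_A(x,x^*)\ge\scal{x}{x^*}$) together with Fact~\ref{F4} will give $G^*(y^*,y)\ge\scal{y}{y^*}$ on $X^*\times X$.

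Next I would set $q(x,x^*):=\tfrac12\|x\|^2+\tfrac12\|x^*\|^2$ and study $\Phi:=G+q$. Since $\Phi\ge\scal{\cdot}{\cdot}+q\ge\tfrac12(\|x\|-\|x^*\|)^2\ge0$, we have $\inf\Phi\ge0$, hence $\Phi^*(0,0)\le0$. As $q$ is finite and continuous, Corollary~\ref{infc} applies to give $\Phi^*(0,0)=(G^*\Box q^*)(0,0)$ with attainment, and $q^*=q$ under the reflexive pairing; combined with $G^*\ge\scal{\cdot}{\cdot}$ this forces $\Phi^*(0,0)=0$ with the infimum attained at some $(\bar y^*,\bar y)$ satisfying $G^*(\bar y^*,\bar y)+\tfrac12\|\bar y^*\|^2+\tfrac12\|\bar y\|^2=0$. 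Squeezing the three nonnegative contributions to zero forces $\|\bar y\|=\|\bar y^*\|$ and $\scal{\bar y}{\bar y^*}=-\|\bar y\|^2$ (so $-\bar y^*\in J\bar y$), together with $G^*(\bar y^*,\bar y)=\scal{\bar y}{\bar y^*}$.

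I would then exploit the \emph{minimum} in Fact~\ref{F4}: there is $\bar u^*$ with $G^*(\bar y^*,\bar y)=F_A^*(\bar y^*-\bar u^*,\bar y)+F_B^*(\bar u^*,\bar y)$. Since $F_A^*(\cdot,\bar y)$ and $F_B^*(\cdot,\bar y)$ each dominate the pairing and their sum equals $\scal{\bar y}{\bar y^*}$, both must meet it; and $F_A^*(w^*,w)=\scal{w}{w^*}$ squeezes $F_A(w,w^*)$ down to $\scal{w}{w^*}$, whence $(\bar y,\bar y^*-\bar u^*)\in\gra A$ and $(\bar y,\bar u^*)\in\gra B$ by the equality clause of Fact~\ref{f:Fitz}. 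Thus $\bar y^*\in(A+B)\bar y$ while $-\bar y^*\in J\bar y$, i.e.\ $0\in(A+B)\bar y+J\bar y$. Finally, monotone relatedness of $(0,0)$ to $\gra(A+B)$ at the point $(\bar y,\bar y^*)\in\gra(A+B)$ gives $\scal{\bar y}{\bar y^*}\ge0$, whereas $-\bar y^*\in J\bar y$ gives $\scal{\bar y}{\bar y^*}=-\|\bar y\|^2\le0$; hence $\bar y=0$, then $\bar y^*\in J(0)=\{0\}$, so $0=\bar y^*\in(A+B)(0)$. This last step is the easy half of the reflexive Minty theorem and needs no strict-convexity renorming.

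The step I expect to be the main obstacle is not the duality calculus but the verification that Fact~\ref{F4} actually applies---in particular, transferring the constraint qualification from $\dom A-\dom B$ to the projected domains $P_X\dom F_A-P_X\dom F_B$. Here one uses the sandwich $\dom A\subseteq P_X\dom F_A\subseteq\clconv\dom A$ (and likewise for $B$), so that $\bigcup_{\lambda>0}\lambda[\dom A-\dom B]$ and $\bigcup_{\lambda>0}\lambda[P_X\dom F_A-P_X\dom F_B]$ have the same closed linear span; since the former is assumed to be a closed subspace, so is the latter. Alongside this I would record the routine bookkeeping: $F_A,F_B$ are proper, convex and (in the reflexive setting, where weak$^*$ and weak coincide on $X^*$) norm lower semicontinuous, so Fact~\ref{F4} yields both the formula for $G^*$ and, crucially, the attainment of its defining minimum; and $\dom G\neq\varnothing$, which secures the Fenchel hypothesis used in Corollary~\ref{infc}. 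Reflexivity is used throughout, both to identify $X^{**}=X$ (so the inequality $F_A(x,x^*)\le F_A^*(x^*,x)$ of Fact~\ref{f:Fitz} can be read against the graph) and to make the duality map $J=\partial\tfrac12\|\cdot\|^2$ available for the concluding argument.
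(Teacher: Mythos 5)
Your proof is correct, and it rests on exactly the same pillars as the paper's: the Fitzpatrick functions $F_A,F_B$, the partial inf-convolution $F_A\Box_2 F_B$, the Simons--Z\u{a}linescu conjugation formula (Fact~\ref{F4}) with its attained minimum, the equality clause of Fact~\ref{f:Fitz}, and the transfer of the constraint qualification to $P_X\dom F_A-P_X\dom F_B$ via the sandwich $\dom A\subseteq P_X\dom F_A\subseteq\clconv\dom A$ (which is precisely what the paper outsources to \cite[Lemma~5.8]{BWY3}). Where you genuinely diverge is in how the critical point is produced. The paper forms $F_2:=(F_A\Box_2 F_B)^{*\intercal}$, minimizes the coercive function $G$ built from $F_2$ plus the quadratic terms (invoking existence of minimizers of coercive convex functions on reflexive spaces), and then unpacks $(0,0)\in\partial G(x_0,x_0^*)$ --- which implicitly uses a subdifferential sum rule to peel off elements $v^*\in Jx_0$, $v\in J_{X^*}x_0^*$ --- before running a Fenchel--Young equality computation. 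You instead stay on the primal side with $G:=F_A\Box_2 F_B$, observe $\inf(G+q)\ge0$, and extract the point $(\bar y^*,\bar y)$ from the \emph{attainment} in $(G+q)^*(0,0)=(G^*\Box q^*)(0,0)$ guaranteed by Corollary~\ref{infc} under the trivially satisfied qualification \eqref{eq:Fen-dual-2}. This buys you a cleaner argument: no appeal to an existence theorem for minimizers, no subdifferential calculus on $G$, and the membership in $\gra A$ and $\gra B$ drops out directly from the attained minimum in Fact~\ref{F4} rather than from an intermediate statement about $F_2$. One small point of hygiene: your justification of the constraint-qualification transfer via ``same closed linear span'' is looser than what you actually need; the clean deduction is the two-sided inclusion $\bigcup_{\lambda>0}\lambda[\dom A-\dom B]\subseteq\bigcup_{\lambda>0}\lambda[P_X\dom F_A-P_X\dom F_B]\subseteq\overline{\conv(\dom A-\dom B)}\subseteq\bigcup_{\lambda>0}\lambda[\dom A-\dom B]$, the last step using that the assumed set is a closed subspace, hence a closed convex cone containing $\dom A-\dom B$. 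With that phrasing tightened, the argument is complete.
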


\begin{proof}Clearly, $A+B$ is monotone. Assume that $(z,z^*)\in X\times X^*$ is monotonically related to $\gra (A+B)$.

Let $F_1:=F_A\Box_2 F_B$, and $F_2:=F_1^{*\intercal}$.  By \cite[Lemma~5.8]{BWY3},
$\bigcup_{\lambda>0} \lambda\left[P_X(\dom F_A)-P_X(\dom F_B)\right]$
is a closed subspace.  Then Fact~\ref{F4} implies that
\begin{align}
F_1^*(x^*,x)=\min_{u^*\in X^*}
\left\{F_A^*(x^*-u^*,x)+F_B^*(u^*,x)\right\},\quad\text{for all } (x,x^*)\in X\times X^*.\label{monSum:e2}
\end{align}
Set $G:X\times X^*\rightarrow\RX$ by
\begin{align*}
(x,x^*)\mapsto F_2(x+z,x^*+z^*)-\langle x, z^*\rangle-\langle z, x^*\rangle
+\frac{1}{2}\|x\|^2+\frac{1}{2}\|x^*\|^2.
\end{align*}
Assume that $(x_0, x_0^*)\in X\times X^*$
is a minimizer of $G$. (\cite[Theorem~2.5.1(ii)]{Zalinescu} implies
that minimizers exist since $G$ is coercive). Then we have $(0,0)\in \partial G(x_0,
x_0^*)$.  Thus, there exists $v^*\in Jx_0, v\in J_{X^*}x_0^* $ such
that $(0,0)\in \partial F_2(x_0+z,x^*_0+z^*)+(v^*,v )+(-z^*,-z) $,
and then
\begin{align*}
(z^*-v^*,z-v)\in \partial F_2(x_0+z,x^*_0+z^*).
\end{align*}
Thence
\begin{align}
\Big\langle(z^*-v^*,z-v),(x_0+z,x^*_0+z^*)\Big\rangle =F_2(x_0+z,x^*_0+z^*)+F^*_2(z^*-v^*,z-v).\label{monSum:e5}
\end{align}
Fact~\ref{f:Fitz} and \eqref{monSum:e2} show that
\begin{align*}
F_2\geq\langle \cdot,\cdot\rangle,\quad F_2^{*\intercal}=\overline{F_1}\geq\langle \cdot,\cdot\rangle.
\end{align*}
Then by \eqref{monSum:e5},
\begin{align}
\Big\langle(z^*-v^*,z-v),(x_0+z,x^*_0+z^*)\Big\rangle &=F_2(x_0+z,x^*_0+z^*)+F^*_2(z^*-v^*,z-v)\nonumber\\
&\geq \big\langle x_0+z,x^*_0+z^*\big\rangle +\big\langle z^*-v^*,z-v\big\rangle.\label{monSum:e5b}
\end{align}
Thus, since $v^*\in Jx_0, v\in J_{X^*}x_0^*$,
\begin{align*}
0\leq\delta&:=\Big\langle(z^*-v^*,z-v),(x_0+z,x^*_0+z^*)\Big\rangle
-\big\langle x_0+z,x^*_0+z^*\big\rangle -\big\langle z^*-v^*,z-v\big\rangle \\
&=\big\langle -x_0-v,x^*_0+v^*\big\rangle = \langle -x_0,x^*_0\rangle-\langle x_0,v^*\rangle-\langle v,x^*_0\rangle -\langle v,v^*\rangle\\
&=\langle -x_0,x^*_0\rangle -\frac{1}{2}\|x_0^*\|^2-\frac{1}{2}\|x_0\|^2-\frac{1}{2}\|v^*\|^2-\frac{1}{2}\|v\|^2-\langle v,v^*\rangle,
\end{align*}
which implies
$$ \delta=0\quad\text{and}\quad\langle x_0,x^*_0\rangle +\frac{1}{2}\|x_0^*\|^2+\frac{1}{2}\|x_0\|^2=0;$$
that is,
\begin{equation}
\delta=0\quad\text{and}\quad x^*_0\in -Jx_0.\label{monSum:e6}
\end{equation}
Combining \eqref{monSum:e5b} and \eqref{monSum:e6}, we have
$F_2(x_0+z,x^*_0+z^*)=
\big\langle x_0+z,x^*_0+z^*\big\rangle$.
By \eqref{monSum:e2} and Fact~\ref{f:Fitz},
\begin{align}
(x_0+z,x^*_0+z^*)\in\gra (A+B).\label{monSum:e7}
\end{align}
Since $(z,z^*)$ is monotonically related to $\gra (A+B)$, it follows from \eqref{monSum:e7} that
$$\big\langle x_0,x^*_0\big\rangle=\big\langle x_0+z-z,x^*_0+z^*-z^*\big\rangle\geq0,$$
and then by \eqref{monSum:e6},
$$ -\|x_0\|^2=-\|x^*_0\|^2\geq0,
$$
whence $(x_0, x^*_0)=(0,0)$. Finally, by \eqref{monSum:e7}, one
deduces that $(z,z^*)\in \gra (A+B)$ and $A+B$ is maximally
monotone.
\end{proof}

It is still unknown whether the reflexivity condition can be omitted in Theorem \ref{MonSum1} though many partial results exist, see \cite{Bor2,Bor3} and \cite[\S9.7]{BorVan}.

\subsection{Autoconjugate functions}\label{sec:main}

Given  $F\colon X\times X^*\rightarrow\RX$, we say that $F$ is
\emph{autoconjugate} if $F=F^{*\intercal}$ on $X\times X^*$. We say
$F$ is a \emph{representer} for $\gra A$ if
\begin{equation}
\gra A=\big\{(x,x^*)\in X\times X^*\mid F(x,x ^*)=\langle x, x^*\rangle\big\}.
\end{equation}
Autoconjugate functions are the core of representer theory, which has been comprehensively studied in Optimization
and Partial Differential Equations
(see \cite{BW09,BWY3,PenotZ,Si2,BorVan,Ghou}).

Fitzpatrick posed the following question in
 \cite[Problem~5.5]{Fitz88}:

\begin{quotation}
\noindent \emph{If $A:X\rightrightarrows X^*$ is maximally monotone,
 does there necessarily exist an autoconjugate representer for A}?
 \end{quotation}
Bauschke and Wang gave an affirmative answer to the above question in reflexive spaces by construction of the function
$\mathcal{B}_A$ in Fact~\ref{GFF:1}. The first
construction of an autoconjugate representer for a maximally monotone
operator satisfying a mild constraint qualification  in a reflexive space  was provided by
Penot and Z\u{a}linescu in \cite{PenotZ}.
This naturally raises a question:
\begin{quotation}
\noindent Is $\mathcal{B}_A$ still
 an autoconjugate representer for a maximally monotone operator $A$ in a general Banach space?
 \end{quotation}
 We give a negative answer to the above question in Example~\ref{FPEX:au1}: in certain spaces, $\mathcal{B}_A$ fails to be
 autoconjugate.

\begin{fact}[Bauschke and Wang] \emph{(See \cite[Theorem~5.7]{BW09}.)}
\label{GFF:1} Suppose that $X$ is reflexive. Let $A\colon X\To X^*$
be maximally monotone. Then
\begin{align}
\mathcal{B}_A \colon X\times X^*&\to \RX\nonumber\\
 (x,x^*)&\mapsto
\inf_{(y,y^*)\in X\times X^* }\Big\{
\tfrac{1}{2}F_A(x+y,x^*+y^*)+
\tfrac{1}{2}F^{*\intercal}_A(x-y,x^*-y^*)+\tfrac{1}{2}\|y\|^2
+\tfrac{1}{2}\|y^*\|^2\Big\} \label{e:GFF:1}
\end{align}
is an autoconjugate representer for $A$.
\end{fact}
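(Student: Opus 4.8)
The plan is to establish two things: that $\mathcal{B}_A$ is autoconjugate, $\mathcal{B}_A=\mathcal{B}_A^{*\intercal}$, and that it represents $A$. Throughout I write $z=(x,x^*)$, $w=(y,y^*)$, $F:=F_A$, $G:=F_A^{*\intercal}$, and $q(w):=\tfrac12\|y\|^2+\tfrac12\|y^*\|^2$, so that $\mathcal{B}_A(z)=\inf_w\{\tfrac12 F(z+w)+\tfrac12 G(z-w)+q(w)\}$. Reflexivity identifies $(X\times X^*)^*$ with $X^*\times X$ through the transpose pairing, and the H\"ormander--Legendre--Fenchel--Moreau biconjugation result gives $G^{*\intercal}=F$ and $F^{*\intercal}=G$ since $F$ is proper lsc convex (Fact~\ref{f:Fitz}), while Example~\ref{ex:conj_norm} yields $q^{*\intercal}=q$. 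I would first record the three consequences of Fact~\ref{f:Fitz} that drive everything: $F\ge\langle\cdot,\cdot\rangle$ and $G\ge\langle\cdot,\cdot\rangle$ pointwise, and $\{F=\langle\cdot,\cdot\rangle\}=\{G=\langle\cdot,\cdot\rangle\}=\gra A$. The equality for $G$ follows because $\langle x,x^*\rangle\le F(x,x^*)\le G(x,x^*)$ forces $F=\langle\cdot,\cdot\rangle$ wherever $G=\langle\cdot,\cdot\rangle$, and conversely $(b,b^*)\in\gra A$ makes $F(\cdot)\ge\langle\cdot,b^*\rangle+\langle b,\cdot\rangle-\langle b,b^*\rangle$, whence $G(b,b^*)=F^*(b^*,b)\le\langle b,b^*\rangle$.

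For autoconjugacy, the heart of the matter is one conjugate computation. First I would compress $\mathcal{B}_A^*$ into a single two-variable conjugate: substituting $a=z+w$, $b=z-w$ shows $\mathcal{B}_A^*(\zeta)=\tfrac12 R^*(\zeta,\zeta)$, where $R(a,b):=F(a)+G(b)+\tfrac12 q(a-b)$ on $(X\times X^*)^2$. Writing $R=R_1+R_2$ with $R_1(a,b):=F(a)+G(b)$ and $R_2(a,b):=\tfrac12 q(a-b)$, the summand $R_2$ is finite and continuous everywhere, so the Fenchel qualification~\eqref{eq:Fen-dual-2} holds and Corollary~\ref{infc} gives $R^*=R_1^*\,\Box\,R_2^*$ with attainment. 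Since $R_1^*(\eta_1,\eta_2)=F^*(\eta_1)+G^*(\eta_2)$, and $R_2^*(\eta_1,\eta_2)$ equals $2q^*(\eta_1)$ when $\eta_2=-\eta_1$ and $+\infty$ otherwise, evaluating the infimal convolution at $(\zeta,\zeta)$ and setting $\omega:=\zeta-\eta_1$ collapses everything to $\mathcal{B}_A^*(\zeta)=\inf_{\omega}\{\tfrac12 F^*(\zeta-\omega)+\tfrac12 G^*(\zeta+\omega)+q^*(\omega)\}$. Finally I would feed in the transpose identities: writing $\zeta=(x^*,x)$ and $\omega=(y^*,y)$, they turn $\tfrac12 F^*(\zeta-\omega)$ into $\tfrac12 G(x-y,x^*-y^*)$, turn $\tfrac12 G^*(\zeta+\omega)$ into $\tfrac12 F(x+y,x^*+y^*)$, and turn $q^*(\omega)$ into $q(y,y^*)$, so the infimum is exactly $\mathcal{B}_A(x,x^*)$. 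Hence $\mathcal{B}_A^{*\intercal}=\mathcal{B}_A$; in particular $\mathcal{B}_A$ is lsc and convex, being a transposed conjugate.

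The representer claim then follows cleanly. The bound $\mathcal{B}_A\ge\langle\cdot,\cdot\rangle$ is immediate from autoconjugacy and Fenchel--Young (Proposition~\ref{prop:Fenchel-Young-fd}), since $2\mathcal{B}_A(x,x^*)=\mathcal{B}_A(x,x^*)+\mathcal{B}_A^*(x^*,x)\ge\langle(x^*,x),(x,x^*)\rangle=2\langle x,x^*\rangle$. For $(x,x^*)\in\gra A$, taking $w=0$ and using $F=G=\langle\cdot,\cdot\rangle$ there gives $\mathcal{B}_A(x,x^*)\le\langle x,x^*\rangle$, so equality holds and $\gra A\subseteq\{\mathcal{B}_A=\langle\cdot,\cdot\rangle\}$. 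For the converse, suppose $\mathcal{B}_A(z)=\langle x,x^*\rangle$. An affine minorant of $F,G$ (Fact~\ref{basic-prin}\ref{basic-prin:E5}) plus the quadratic $q$ makes the objective in $w$ coercive, convex and weakly lsc, so on the reflexive space it attains its minimum at some $\bar w=(\bar y,\bar y^*)$; then the chain $\langle x,x^*\rangle=\tfrac12 F(z+\bar w)+\tfrac12 G(z-\bar w)+q(\bar w)\ge\langle x,x^*\rangle+\langle\bar y,\bar y^*\rangle+q(\bar w)\ge\langle x,x^*\rangle$ must collapse to equalities. Equality in the Fitzpatrick bounds puts $z+\bar w$ and $z-\bar w$ in $\gra A$, so monotonicity gives $\langle\bar y,\bar y^*\rangle\ge0$, while the last equality gives $\langle\bar y,\bar y^*\rangle+q(\bar w)=0$ with $q\ge0$, forcing $\bar w=0$ and hence $z\in\gra A$.

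I expect the main obstacle to be the autoconjugacy computation: justifying the conjugate-of-a-sum step $R^*=R_1^*\,\Box\,R_2^*$ and carrying the two changes of variables through without sign errors, so that the transpose identities line up to reproduce $\mathcal{B}_A$ exactly. The continuity and full domain of the quadratic coupling $R_2$ is precisely what makes the qualification free and lets the whole scheme go through.
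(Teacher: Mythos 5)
The paper offers no proof of Fact~\ref{GFF:1}: it is imported verbatim from Bauschke--Wang \cite[Theorem~5.7]{BW09}, so there is nothing internal to compare your argument against line by line. That said, your proof is correct and essentially self-contained given the tools the paper does state. The conjugate computation $\mathcal{B}_A^{*}(\zeta)=\inf_{\omega}\{\tfrac12 F^*(\zeta-\omega)+\tfrac12 G^*(\zeta+\omega)+q^*(\omega)\}$ that you derive by splitting $R=R_1+R_2$ and invoking Corollary~\ref{infc} (the coupling $R_2$ being everywhere continuous, so \eqref{eq:Fen-dual-2} is free) is exactly the specialization of Simons' result, Fact~\ref{Sduu:1} with $f_1=F$, $f_2=G$, $g=q$, which is the tool the paper itself deploys for the analogous computation in Theorem~\ref{Theoauc:1}; you have simply re-proved that instance from Fenchel duality rather than citing it, and then closed the loop with $F^{*\intercal}=G$, $G^{*\intercal}=F$ (biconjugation, valid since $F$ is proper lsc convex and $X$ is reflexive) and $q^{*\intercal}=q$. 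Two places deserve the care you gave them: first, the naive lower bound $F,G\ge\langle\cdot,\cdot\rangle$ yields only $\tfrac12 F(z+w)+\tfrac12 G(z-w)+q(w)\ge\langle x,x^*\rangle+\tfrac12(\|y\|-\|y^*\|)^2$, which is \emph{not} coercive in $w$, so your switch to affine minorants (Fact~\ref{basic-prin}\ref{basic-prin:E5}) plus the supercoercive quadratic is genuinely needed to get attainment in the converse representer step; second, the identity $\{G=\langle\cdot,\cdot\rangle\}=\gra A$ is not literally part of Fact~\ref{f:Fitz} and your short derivation of it (sandwiching for one inclusion, the affine minorant of $F$ generated by a graph point for the other) is the right fix. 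The only cosmetic blemish is the substitution $\omega:=\zeta-\eta_1$ in the infimal convolution, whose sign does not match the displayed formula; since $q^*$ is even and $\omega$ ranges over the whole dual space, relabeling $\omega\mapsto-\omega$ repairs it with no effect on the conclusion.
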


We will make use of the following result to prove Theorem~\ref{Theoauc:1} below.

\begin{fact}[Simons]\label{Sduu:1}\emph{(See \cite[Corollary~10.4]{Si2}.)}
Let $f_1, f_2, g\colon X\rightarrow \RX$ be proper convex. Assume that $g$ is continuous at a point of $\dom f_1
-\dom f_2$. Suppose that
\begin{align*}
h(x):=\inf_{z\in X}\left\{\tfrac{1}{2} f_1(x+z)+\tfrac{1}{2} f_2(x-z)+\tfrac{1}{4}g(2z)\right\}>-\infty,\quad\forall x\in X.
\end{align*}
Then
\begin{align*}
h^*(x^*)=\min_{z^*\in X^*}\left\{\tfrac{1}{2} f^*_1(x^*+z^*)+\tfrac{1}{2} f^*_2(x^*-z^*)+\tfrac{1}{4}g^*(-2z^*)\right\},\quad\forall x^*\in X^*.
\end{align*}
\end{fact}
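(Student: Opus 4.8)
The plan is to view $h$ as the marginal (partial infimum) of a single convex function on $X\times X$ and to compute its conjugate by one application of Fenchel duality, Theorem~\ref{thm:Fenchel-duality-fd}. Define $\Phi\colon X\times X\to\RX$ by $\Phi(x,z):=\tfrac12 f_1(x+z)+\tfrac12 f_2(x-z)+\tfrac14 g(2z)$; it is convex, being a sum of convex functions precomposed with linear maps, and takes values in $\RX$ because $f_1,f_2,g$ do. Then $h(x)=\inf_{z\in X}\Phi(x,z)$, and a routine double-supremum computation gives the unconditional identity $h^*(x^*)=\Phi^*(x^*,0)$ for every $x^*\in X^*$. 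Thus everything reduces to evaluating $\Phi^*$ on the slice where the dual $z$-variable vanishes.

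To compute $\Phi^*$ I would write $\Phi=F+G\circ T$ with $F(x,z):=\tfrac12 f_1(x+z)+\tfrac12 f_2(x-z)$, $G:=\tfrac14 g$, and $T\colon X\times X\to X$ the bounded linear map $T(x,z):=2z$, so that $(G\circ T)(x,z)=\tfrac14 g(2z)$. The conjugate of $F$ is elementary and needs no qualification: with $L(x,z):=(x+z,x-z)$ and the separable $\widetilde F(a,b):=\tfrac12 f_1(a)+\tfrac12 f_2(b)$ one has $F=\widetilde F\circ L$, hence $\widetilde F^*(a^*,b^*)=\tfrac12 f_1^*(2a^*)+\tfrac12 f_2^*(2b^*)$, and composing with $(L^{-1})^*(x^*,z^*)=(\tfrac{x^*+z^*}{2},\tfrac{x^*-z^*}{2})$ gives $F^*(x^*,z^*)=\tfrac12 f_1^*(x^*+z^*)+\tfrac12 f_2^*(x^*-z^*)$. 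A direct adjoint computation yields $T^*y^*=(0,2y^*)$, and the positive-scaling rule gives $G^*(y^*)=\tfrac14 g^*(4y^*)$.

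The crux, and the only real obstacle, is to justify the exact conjugate-of-a-sum formula \emph{with attainment}; this is exactly where the hypotheses enter. Applying Theorem~\ref{thm:Fenchel-duality-fd} to the pair $F-\scal{(x^*,0)}{\cdot}$ and $G$ together with $T$, the primal value is $p=\inf_{x,z}\{\Phi(x,z)-\scal{x^*}{x}\}=-h^*(x^*)$, so strong duality with attainment of the dual supremum delivers $h^*(x^*)=\min_{y^*\in X^*}\{F^*(x^*,2y^*)+G^*(-y^*)\}$. The required constraint qualification \eqref{eq:Fen-dual-2} is $\cont G\cap T\dom F\ne\varnothing$; since $\cont G=\cont g$ and a short verification shows $T\dom F=\dom f_1-\dom f_2$, this is \emph{precisely} the assumption that $g$ be continuous at a point of $\dom f_1-\dom f_2$. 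The standing hypothesis $h>-\infty$ keeps $h$ proper, so that the displayed expression is a genuine minimum; it plays no further role, since the duality argument only needs finiteness of the dual value, which holds exactly when $x^*\in\dom h^*$.

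Finally I would substitute the conjugates and reparametrize. Inserting $F^*(x^*,2y^*)=\tfrac12 f_1^*(x^*+2y^*)+\tfrac12 f_2^*(x^*-2y^*)$ and $G^*(-y^*)=\tfrac14 g^*(-4y^*)$ into the minimum formula and setting $z^*:=2y^*$, which is a bijection of $X^*$, converts it into $h^*(x^*)=\min_{z^*\in X^*}\{\tfrac12 f_1^*(x^*+z^*)+\tfrac12 f_2^*(x^*-z^*)+\tfrac14 g^*(-2z^*)\}$, as claimed.
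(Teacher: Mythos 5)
Your argument is correct. One point of context first: the paper does not prove this statement at all --- it records it as a Fact and cites Simons \cite[Corollary~10.4]{Si2} --- so any proof is ``a different route'' from the paper's. What you supply is a clean, self-contained derivation from the paper's own Fenchel duality theorem (Theorem~\ref{thm:Fenchel-duality-fd}), which is in the spirit of the paper's repeated theme that everything in the Hahn--Banach circle is inter-derivable; Simons' own proof runs through his Hahn--Banach--Lagrange machinery rather than through Theorem~\ref{thm:Fenchel-duality-fd} as stated here, but the underlying duality content is the same. I checked the individual steps: the marginal identity $h^*(x^*)=\Phi^*(x^*,0)$ is the standard unconditional computation; the conjugate $F^*(x^*,z^*)=\tfrac12 f_1^*(x^*+z^*)+\tfrac12 f_2^*(x^*-z^*)$ follows correctly from the separable conjugate composed with the invertible map $L(x,z)=(x+z,x-z)$, since $(\widetilde F\circ L)^*=\widetilde F^*\circ (L^{-1})^*$ and $(\tfrac12 f_i)^*(c^*)=\tfrac12 f_i^*(2c^*)$; the adjoint $T^*y^*=(0,2y^*)$ and the scaling rule $(\tfrac14 g)^*(y^*)=\tfrac14 g^*(4y^*)$ are right; the identification $T\dom F=\dom f_1-\dom f_2$ (via $x=\tfrac{u+v}{2}$, $z=\tfrac{u-v}{2}$) shows that~\eqref{eq:Fen-dual-2} is exactly the stated continuity hypothesis; and the substitution $z^*=2y^*$ lands precisely on the asserted formula. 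The one place where I would tighten the prose is your dismissal of the hypothesis $h>-\infty$: besides keeping $h$ proper, it is what legitimizes reading the conclusion as an attained minimum in the nondegenerate cases (when $h^*(x^*)$ is finite, Theorem~\ref{thm:Fenchel-duality-fd} gives dual attainment; when $h^*(x^*)=+\infty$ the right-hand side is identically $+\infty$ and the formula holds in the usual degenerate sense, which is also how Simons reads it). This is a matter of wording, not a gap.
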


Let $A:X\rightrightarrows X^*$ be a linear relation.
  We say that $A$ is
\emph{skew} if $\gra A \subseteq \gra (-A^*)$;
equivalently, if $\langle x,x^*\rangle=0,\; \forall (x,x^*)\in\gra A$.
Furthermore,
$A$ is \emph{symmetric} if $\gra A
\subseteq\gra A^*$; equivalently, if $\scal{x}{y^*}=\scal{y}{x^*}$,
$\forall (x,x^*),(y,y^*)\in\gra A$.
We define the \emph{symmetric part} and the \emph{skew part} of $A$ via
\begin{equation}
\label{Fee:1}
P := \thalb A + \thalb A^* \quad\text{and}\quad
S:= \thalb A - \thalb A^*,
\end{equation}
respectively. It is easy to check that $P$ is symmetric and that $S$
is skew.

\begin{fact}\emph{(See \cite[Theorem~3.7]{BBWY3}.)}
\label{PBABD:2}
Let $A: X^*\rightarrow X^{**}$ be linear and continuous.
 Assume that $\ran A \subseteq X$ and that there exists $e\in X^{**}\backslash X$ such that
\begin{align*}
\langle Ax^*,x^*\rangle=\langle e,x^*\rangle^2,\quad \forall x^*\in X^*.
\end{align*}
Let $ P$ and $S$ respectively  be the symmetric part and skew
 part of $A$.  Let $T:X\rightrightarrows X^*$  be defined by
\begin{align}\gra T&:=\big\{(-Sx^*,x^*)\mid x^*\in X^*, \langle e, x^*\rangle=0\big\}
=\big\{(-Ax^*,x^*)\mid x^*\in X^*, \langle e, x^*\rangle=0\big\}.\label{PBABA:a1}
\end{align}
Then the following hold.
\begin{enumerate}
\item\label{PBAB:em01}
$A$ is a maximally monotone operator on $X^*$ .
\item\label{PBAB:emmaz1}
$Px^*=\langle x^*,e\rangle e,\ \forall x^*\in X^*.$

\item\label{PBAB:em1}
 $T$
is maximally monotone and skew on $X$.

\item\label{PBAB:emma1}
$\gra  T^*=\{(Sx^*+re,x^*)\mid x^*\in X^*,\ r\in\RR\}$.

\item\label{PBAB:em2}
$F_T=\iota_C$, where
$
C:=\{(-Ax^*,x^*)\mid x^*\in X^*\}$.

\end{enumerate}

\end{fact}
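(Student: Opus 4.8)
The plan is to extract the bilinear structure hidden in the hypothesis and then to exploit repeatedly that $e\in X^{**}\setminus X$ in order to separate the ``$X$-part'' of a vector from its ``$e$-direction''. Since $\ran A\subseteq X$, I would regard $A$ as a bounded operator $A\colon X^*\to X$ with Banach-space adjoint $A^*\colon X^*\to X^{**}$ satisfying $\scal{Ax^*}{y^*}=\scal{A^*y^*}{x^*}$. First I would polarize the identity $\scal{Ax^*}{x^*}=\scal{e}{x^*}^2$ (replacing $x^*$ by $x^*+y^*$ and cancelling the diagonal terms) to obtain $\scal{(A+A^*)x^*}{y^*}=2\scal{e}{x^*}\scal{e}{y^*}$ for all $x^*,y^*$, whence $Px^*=\thalb(A+A^*)x^*=\scal{x^*}{e}e$; this settles (ii). For (i), monotonicity is immediate from $\scal{A(x^*-y^*)}{x^*-y^*}=\scal{e}{x^*-y^*}^2\ge0$, and I would obtain maximality from the standard fact that a norm-continuous, everywhere-defined linear monotone operator is maximal: if $(z,z^*)$ is monotonically related to $\gra A$, testing with the points $z-tw$ and letting $t\downarrow0$ forces $z^*=Az$. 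Using (ii), on the subspace $\{x^*\mid\scal{e}{x^*}=0\}$ one has $Px^*=0$ and hence $Sx^*=Ax^*$, which both shows that the two displayed descriptions of $\gra T$ coincide and that $\gra T\subseteq X\times X^*$ (because $Ax^*\in X$).

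For (iii) the easy half is that $T$ is a skew linear relation: $\gra T$ is the linear image of a subspace, and for $(x,x^*)=(-Ax^*,x^*)\in\gra T$ one has $\scal{x}{x^*}=-\scal{Ax^*}{x^*}=-\scal{e}{x^*}^2=0$, so $T$ is skew and therefore monotone. The substance is maximality. I would take $(v,v^*)\in X\times X^*$ monotonically related to $\gra T$, test against $t(x_0,x_0^*)\in\gra T$ for all $t\in\RR$, and use skewness to extract the two consequences $\scal{v}{x_0^*}+\scal{x_0}{v^*}=0$ for every $(x_0,x_0^*)\in\gra T$ and $\scal{v}{v^*}\ge0$. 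Substituting $x_0=-Ax_0^*$ and using the adjoint turns the first into $\scal{v-A^*v^*}{x_0^*}=0$ for all $x_0^*\in\ker\scal{e}{\cdot}$; since a functional on $X^*$ that vanishes on the hyperplane $\ker\scal{e}{\cdot}$ must be a scalar multiple of $e$, this gives $v-A^*v^*=re$ for some $r\in\RR$. Rewriting $A^*v^*=Pv^*-Sv^*=2\scal{v^*}{e}e-Av^*$ turns this into $v+Av^*=(2\scal{v^*}{e}+r)e$; as the left-hand side lies in $X$ while $e\notin X$, the coefficient must vanish, giving $v=-Av^*$. Then the quadratic consequence $0\le\scal{v}{v^*}=-\scal{Av^*}{v^*}=-\scal{e}{v^*}^2$ forces $\scal{e}{v^*}=0$, so $(v,v^*)\in\gra T$ and $T$ is maximal.

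Parts (iv) and (v) I would treat as parallel computations resting on the same two facts: annihilators of $\ker\scal{e}{\cdot}$ are multiples of $e$, and a vector $w+ce$ with $w\in X$ lies in $X$ only when $c=0$. For (iv), unwinding the definition of the adjoint shows $(x^{**},x^*)\in\gra T^*$ exactly when $\scal{x^{**}+A^*x^*}{x_0^*}=0$ for all $x_0^*\in\ker\scal{e}{\cdot}$, i.e. $x^{**}=-A^*x^*-se=Sx^*-(\scal{x^*}{e}+s)e$ for some $s\in\RR$; letting $r:=-(\scal{x^*}{e}+s)$ run over $\RR$ yields $\gra T^*=\{(Sx^*+re,x^*)\mid x^*\in X^*,\ r\in\RR\}$. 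For (v), skewness collapses the Fitzpatrick function to $F_T(x,x^*)=\sup_{x_0^*\in\ker\scal{e}{\cdot}}\scal{x-A^*x^*}{x_0^*}$, which is $0$ when $x-A^*x^*$ annihilates $\ker\scal{e}{\cdot}$ and $+\infty$ otherwise; that annihilation means $x-A^*x^*\in\spand\{e\}$, which in turn (using $e\notin X$ on $x-A^*x^*=x+Ax^*-2\scal{x^*}{e}e$) is equivalent to $x=-Ax^*$, that is $(x,x^*)\in C$. Hence $F_T=\iota_C$.

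I expect the main obstacle to be the maximality argument in (iii): the difficulty is that one has to combine the \emph{linear} information (the annihilator identity) with the \emph{quadratic} information $\scal{v}{v^*}\ge0$ in order to recover \emph{both} $v=-Av^*$ and $\scal{e}{v^*}=0$. More generally, the recurring delicate point across (iii)--(v) is keeping careful track of which quantities live in $X$ and which only in $X^{**}$, so that the hypothesis $e\notin X$ can be invoked at exactly the right moment to kill the spurious $e$-component.
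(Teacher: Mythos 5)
The paper does not prove this statement: it is imported verbatim as a Fact with a citation to \cite[Theorem~3.7]{BBWY3}, so there is no internal argument to compare yours against. Your reconstruction is correct and complete, and it isolates exactly the right mechanisms: polarizing $\langle Ax^*,x^*\rangle=\langle e,x^*\rangle^2$ to get $Px^*=\langle x^*,e\rangle e$, the observation that an element of $X^{**}$ annihilating the hyperplane $\ker\langle e,\cdot\rangle$ is a scalar multiple of $e$, and the use of $\ran A\subseteq X$ together with $e\notin X$ to kill the residual $e$-component. The maximality argument for $T$ in (iii) is sound --- the affine-in-$t$ trick on the subspace $\gra T$ correctly yields both the linear identity $\langle v,x_0^*\rangle+\langle x_0,v^*\rangle=0$ and $\langle v,v^*\rangle\ge0$, which combine as you describe --- and the computations in (iv) and (v) via $A^*=2P-A$ check out. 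The only blemish is notational: in (i) the monotonically related pair lives in $X^*\times X^{**}$, so your ``$z^*=Az$'' should read $z^{**}=Az^*$; this does not affect the argument.
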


\bigskip

We  next give  concrete examples of $A,T$ as in Fact~\ref{PBABD:2}.

\begin{example}[$c_0$]\label{FPEX:1}{(See \cite[Example~4.1]{BBWY3}.)}
 Let $X: = c_0$, with norm $\|\cdot\|_{\infty}$ so that
  $X^* = \ell^1$ with norm $\|\cdot\|_{1}$, and  $X^{**}=\ell^{\infty}$  with its second dual norm
$\|\cdot\|_{*}$ (i.e., $\|y\|_{*}:=\sup_{n\in\NN}|y_n|,\, \forall
  y:=(y_n)_{n\in\NN}\in \ell^{\infty}$).
Fix
$\alpha:=(\alpha_n)_{n\in\NN}\in\ell^{\infty}$ with $\limsup
\alpha_n\neq0$, and let
$A_{\alpha}:\ell^1\rightarrow\ell^{\infty}$ be defined  by
\begin{align}\label{def:Aa}
(A_{\alpha}x^*)_n:=\alpha^2_nx^*_n+2\sum_{i>n}\alpha_n \alpha_ix^*_i,
\quad \forall x^*=(x^*_n)_{n\in\NN}\in\ell^1.\end{align}
\allowdisplaybreaks Now let $ P_{\alpha}$ and $S_{\alpha}$
respectively
  be the symmetric part and skew
 part of $A_{\alpha}$.  Let $T_{\alpha}:c_{0}\rightrightarrows X^*$  be defined by
\begin{align}\gra T_{\alpha}&
:=\big\{(-S_{\alpha} x^*,x^*)\mid x^*\in X^*,
 \langle \alpha, x^*\rangle=0\big\}
=\big\{(-A_{\alpha} x^*,x^*)\mid x^*\in X^*,
 \langle \alpha, x^*\rangle=0\big\}\nonumber\\
&=\big\{\big((-\sum_{i>n}
\alpha_n \alpha_ix^*_i+\sum_{i<n}\alpha_n \alpha_ix^*_i)_{n\in\NN}, x^*\big)
\mid x^*\in X^*, \langle \alpha, x^*\rangle=0\big\}.\label{PBABA:Eac1}
\end{align}
Then
\begin{enumerate}
\item\label{BCCE:A01} $\langle A_{\alpha}x^*,x^*\rangle=\langle \alpha , x^*\rangle^2,
\quad \forall x^*=(x^*_n)_{n\in\NN}\in\ell^1$
and \eqref{PBABA:Eac1} is well defined.

\item \label{BCCE:SA01} $A_{\alpha}$ is a maximally monotone.

\item\label{BCCE:A1} $T_{\alpha}$
is a maximally monotone  operator.

\item \label{BCCE:A6}  Let $G:\ell^1\rightarrow\ell^{\infty} $
 be \emph{Gossez's operator} \cite{Gossez1} defined by
\begin{align*}
\big(G(x^*)\big)_n:=\sum_{i>n}x^*_i-
\sum_{i<n}x^*_i,\quad \forall(x^*_n)_{n\in\NN}\in\ell^1.
\end{align*}
Then $T_e: c_0\To\ell^1$ as defined by
\begin{align*}
\gra T_e:=\{(-G(x^*),x^*)\mid x^*\in\ell^1, \langle x^*, e\rangle=0\}
\end{align*}
is a maximally monotone operator, where $e:=(1,1,\ldots,1,\ldots)$.\qede
\end{enumerate}
\end{example}

We may now show that $ \mathcal{B}_T$ need not be autoconjugate.

\begin{theorem}\label{Theoauc:1}
Let $A: X^*\rightarrow X^{**}$ be linear and continuous.
 Assume that $\ran A \subseteq X$ and that there exists $e\in X^{**}\backslash X$ such that
 $\|e\|<\frac{1}{\sqrt{2}}$ and
\begin{align*}
\langle Ax^*,x^*\rangle=\langle e,x^*\rangle^2,\quad \forall x^*\in X^*.
\end{align*}
Let $ P$ and $S$ respectively  be the symmetric part and skew
 part of $A$.  Let $T, C$ be defined as in Fact~\ref{PBABD:2}.
 Then
 \begin{align*}
 \mathcal{B}_T(-Aa^*, a^*)>\mathcal{B}^*_T( a^*,-Aa^*),\quad\forall a^*\notin\{e\}_{\bot}.
 \end{align*}
In consequence, $ \mathcal{B}_T$ is not autoconjugate.
\end{theorem}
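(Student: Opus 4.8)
The plan is to prove the displayed strict inequality directly; it yields the conclusion at once, since $\mathcal{B}_T^{*\intercal}(-Aa^*,a^*)=\mathcal{B}_T^*(a^*,-Aa^*)$, so the inequality exhibits a point of $X\times X^*$ at which $\mathcal{B}_T\neq\mathcal{B}_T^{*\intercal}$. Write $c:=\langle e,a^*\rangle$, which is nonzero exactly because $a^*\notin\{e\}_{\bot}$. Two identities drive everything. First, Fact~\ref{PBABD:2}\ref{PBAB:em2} gives $F_T=\iota_C$ with $C=\{(-Ax^*,x^*)\mid x^*\in X^*\}$. Second, the definition~\eqref{Fee:1} of the symmetric part yields $A^*=2P-A$, so that Fact~\ref{PBABD:2}\ref{PBAB:emmaz1} gives the adjoint identity $A^*x^*=2\langle x^*,e\rangle e-Ax^*$ for all $x^*\in X^*$.

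First I would bound $\mathcal{B}_T(-Aa^*,a^*)$ below. Because $F_T=\iota_C$, the first summand of~\eqref{e:GFF:1} is finite only when $(-Aa^*+y,a^*+y^*)\in C$, which forces $y=-Ay^*$. Next, a support-function computation identifies the restriction $f_2:=F_T^{*\intercal}|_{X\times X^*}$ with $\iota_{\gra T}$: from $F_T^*=\sigma_C$ and the adjoint identity one gets $\sigma_C(u^*,u)=0$ iff $u=A^*u^*$ in $X^{**}$, and for $u\in X$ this forces $\langle u^*,e\rangle=0$ and $u=-Au^*$, i.e.\ $(u,u^*)\in\gra T$. Substituting $y=-Ay^*$ therefore collapses the second summand to the single constraint $\langle e,a^*-y^*\rangle=0$, so that
\[
\mathcal{B}_T(-Aa^*,a^*)=\inf\Big\{\tfrac12\|Ay^*\|^2+\tfrac12\|y^*\|^2 \ \Big|\ \langle e,y^*\rangle=c\Big\}\ \geq\ \inf\Big\{\tfrac12\|y^*\|^2 \ \Big|\ \langle e,y^*\rangle=c\Big\}=\frac{c^2}{2\|e\|^2},
\]
the final equality holding because $|c|\leq\|e\|\,\|y^*\|$ on the feasible set.

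Next I would bound $\mathcal{B}_T^*(a^*,-Aa^*)$ above via Fact~\ref{Sduu:1}, applied on $X\times X^*$ with $f_1:=F_T=\iota_C$, $f_2:=\iota_{\gra T}$ and $g(x,x^*):=\tfrac12\|x\|^2+\tfrac12\|x^*\|^2$, the regulariser $\tfrac12\|y\|^2+\tfrac12\|y^*\|^2$ being recovered as $\tfrac14 g(2\cdot)$. Its hypotheses hold: $g$ is continuous everywhere, hence at a point of $\dom f_1-\dom f_2$, and $\mathcal{B}_T\geq\langle\cdot,\cdot\rangle>-\infty$ by Fact~\ref{f:Fitz}. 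Simons' formula then expresses $\mathcal{B}_T^*(a^*,-Aa^*)$ as the minimum over $z^*=(\zeta^*,\zeta^{**})\in X^*\times X^{**}$ of $\tfrac12\sigma_C(w+z^*)+\tfrac12\sigma_{\gra T}(w-z^*)+\tfrac12\|\zeta^*\|^2+\tfrac12\|\zeta^{**}\|^2$, where $w=(a^*,-Aa^*)$. Here $\sigma_C(a^*,-Aa^*)=+\infty$ precisely because $c\neq0$, so $z^*=0$ is infeasible; instead I would test $z^*=(0,2c\,e)$, which makes both support terms vanish and gives
\[
\mathcal{B}_T^*(a^*,-Aa^*)\ \leq\ \tfrac12\|2c\,e\|^2=2c^2\|e\|^2 .
\]

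Finally I would combine the two estimates. Since $\|e\|<\tfrac{1}{\sqrt2}$ we have $2\|e\|^2<1<\tfrac{1}{2\|e\|^2}$, hence $2c^2\|e\|^2<\tfrac{c^2}{2\|e\|^2}$ as $c\neq0$; chaining this with the two displays gives $\mathcal{B}_T^*(a^*,-Aa^*)<\mathcal{B}_T(-Aa^*,a^*)$, which is the claim. I expect the main obstacle to be the two support-function identities, $f_2=\iota_{\gra T}$ and the evaluation of $\sigma_C$, in the \emph{non-reflexive} setting: everything turns on $e\in X^{**}\setminus X$, which is exactly what makes $z^*=0$ infeasible and destroys the $X$--$X^{**}$ symmetry present in the reflexive case, and on checking the hypotheses of Fact~\ref{Sduu:1} carefully enough to justify the min-formula and its attainment.
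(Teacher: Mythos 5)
Your proposal is correct and follows essentially the same route as the paper's proof: the key identity $F_T^{*\intercal}|_{X\times X^*}=\iota_{\gra T}$, the reduction of $\mathcal{B}_T(-Aa^*,a^*)$ to an infimum over $\{y^*\mid \langle e,y^*\rangle=\langle e,a^*\rangle\}$, and Simons' formula (Fact~\ref{Sduu:1}) tested at $(0,2\langle a^*,e\rangle e)=(0,2Pa^*)$ to get the upper bound $2\langle a^*,e\rangle^2\|e\|^2$. The only cosmetic difference is your lower bound $\langle e,a^*\rangle^2/(2\|e\|^2)$, obtained by discarding $\|Ay^*\|^2$, whereas the paper keeps that term and uses $\tfrac12\|Ay^*\|^2+\tfrac12\|y^*\|^2\geq\langle Ay^*,y^*\rangle=\langle e,a^*\rangle^2$; both bounds close the gap since $\|e\|<1/\sqrt{2}$.
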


\begin{proof}
First we claim that
\begin{align}
\iota_C^{*\intercal}|_{X\times X^*}=\iota_{\gra T}.\label{autoNW:1}
\end{align}
Clearly, if we set $D:=\{(A^*x^*, x^*)\mid x^*\in X^*\}$, we have
\begin{align}
\iota^{*\intercal}_C=\sigma_C^{\intercal}=\iota_{C^{\bot}}^{\intercal}=\iota_D,\label{autoNW:2}
\end{align}
where in the second equality we use the fact that $C$ is a subspace. Additionally,
\begin{align}
A^*x^*\in X&\Leftrightarrow (S+P)^*x^*\in X\Leftrightarrow
S^*x^*+P^*x^*\in X\Leftrightarrow -Sx^*+Px^*\in X\nonumber\\
&\Leftrightarrow -Sx^*-Px^*+2Px^*\in X
\Leftrightarrow 2Px^*-Ax^*\in X
\Leftrightarrow Px^*\in X\quad\text{(since $\ran A\subseteq X$)}\nonumber\\
&\Leftrightarrow \langle x^*,e\rangle e\in X\quad\text{(by Fact~\ref{PBABD:2}\ref{PBAB:emmaz1})}\nonumber\\
&\Leftrightarrow \langle x^*,e\rangle=0\quad\text{(since $e\notin X$)}.\label{autoNW:3}
\end{align}
Observe that $Px^*=0$ for all $x^*\in\{e\}_{\bot}$ by Fact~\ref{PBABD:2}\ref{PBAB:emmaz1}. Thus, $A^* x^*=-Ax^*$ for all $x^*\in\{e\}_{\bot}$. Combining~\eqref{autoNW:2} and~\eqref{autoNW:3}, we have
\begin{align*}
\iota^{*\intercal}_C|_{X\times X^*}=\iota_{D\cap(X\times X^*)}=\iota_{\gra T},
\end{align*}
and hence \eqref{autoNW:1} holds.

Let $a^*\notin\{e\}_{\bot}$. Then $\langle a^*,e\rangle\neq0$.
Now we compute $\mathcal{B}_T(-Aa^*, a^*)$.
By Fact~\ref{PBABD:2}\ref{PBAB:em2} and \eqref{autoNW:1},
\begin{align}
&\mathcal{B}_T(-Aa^*, a^*)\nonumber\\
&=\inf_{(y,y^*)\in X\times X^* }\left\{
\iota_C(-Aa^*+y, a^*+y^*)+
\iota_{\gra T}(-Aa^*-y,a^*-y^*)+\tfrac{1}{2}\|y\|^2
+\tfrac{1}{2}\|y^*\|^2\right\}.\label{autoNW:4}
\end{align}
Thus
\begin{align}
\mathcal{B}_T(-Aa^*, a^*)
&=\inf_{y=-Ay^* }\left\{
\iota_{\gra T}(-Aa^*-y,a^*-y^*)+\tfrac{1}{2}\|y\|^2
+\tfrac{1}{2}\|y^*\|^2\right\}\nonumber\\
&=\inf_{y=-Ay^*,\,\langle a^*-y^*,e\rangle=0 }\left\{
\tfrac{1}{2}\|y\|^2
+\tfrac{1}{2}\|y^*\|^2\right\}=\inf_{\langle a^*-y^*,e\rangle=0 }\left\{
\tfrac{1}{2}\|Ay^*\|^2
+\tfrac{1}{2}\|y^*\|^2\right\}\nonumber\\
&\geq\inf_{\langle a^*-y^*,e\rangle=0 }\langle Ay^*,y^*\rangle=\inf_{\langle a^*-y^*,e\rangle=0 }\langle e,y^*\rangle^2\nonumber\\
&=\langle e,a^*\rangle^2.
\label{autoNW:5}
\end{align}
\allowdisplaybreaks
Next we will compute
$\mathcal{B}^*_T( a^*,-Aa^*)$.
By Fact~\ref{Sduu:1} and \eqref{autoNW:4}, we have
\begin{align}
&\mathcal{B}^*_T( a^*,-Aa^*)\nonumber\\
&=\min_{(y^*,y^{**})\in X^*\times X^{**} }\left\{
\frac{1}{2}\iota^*_C( a^*+y^*,-Aa^*+y^{**})+
\frac{1}{2}\iota^*_{\gra T}(a^*-y^*,-Aa^*-y^{**})+\tfrac{1}{2}\|y^{**}\|^2
+\tfrac{1}{2}\|y^*\|^2\right\}\nonumber\\
&=\min_{(y^*,y^{**})\in X^*\times X^{**} }\left\{
\iota_{D}( -Aa^*+y^{**}, a^*+y^*)+
\iota_{(\gra T)^{\bot}}(a^*-y^*,-Aa^*-y^{**})+\tfrac{1}{2}\|y^{**}\|^2
+\tfrac{1}{2}\|y^*\|^2\right\}\quad\text{(by \eqref{autoNW:2})}\nonumber\\
&\leq\iota_{D}( -Aa^*+2Pa^*, a^*)+
\iota_{(\gra T)^{\bot}}(a^*,-Aa^*-2Pa^*)+\tfrac{1}{2}\|2Pa^*\|^2\quad\text{(by  taking $y^*=0, y^{**}=2Pa^*$)}\nonumber\\
&=
\iota_{\gra (-T^*)}(-Aa^*-2Pa^*,a^*)+\tfrac{1}{2}\|2Pa^*\|^2\nonumber\\
&=\tfrac{1}{2}\|2Pa^*\|^2\quad\text{(by  Fact~\ref{PBABD:2}\ref{PBAB:emma1})}\nonumber\\
&=\tfrac{1}{2}\|2\langle a^*,e\rangle e\|^2\quad\text{(by  Fact~\ref{PBABD:2}\ref{PBAB:emmaz1})}\nonumber\\
&=2\langle a^*,e\rangle^2\| e\|^2.\nonumber
\end{align}
This inequality along with \eqref{autoNW:5}, $\langle e,
a^*\rangle\neq0$ and $\|e\|<\frac{1}{\sqrt{2}}$, yield
 \begin{align*}
 \mathcal{B}_T(-Aa^*, a^*)\geq \langle e,a^*\rangle^2>2\langle a^*,e\rangle^2\| e\|^2\geq\mathcal{B}^*_T( a^*,-Aa^*),\quad\forall a^*\notin\{e\}_{\bot}.
 \end{align*}
Hence $ \mathcal{B}_T$ is not autoconjugate.
\end{proof}

\begin{example}[Example~\ref{FPEX:1} revisited]\label{FPEX:au1}
 Let $X: = c_0$, with norm $\|\cdot\|_{\infty}$ so that
  $X^* = \ell^1$ with norm $\|\cdot\|_{1}$, and  $X^{**}=\ell^{\infty}$  with its second dual norm
$\|\cdot\|_{*}$.
Fix
$\alpha:=(\alpha_n)_{n\in\NN}\in\ell^{\infty}$ with $\limsup
\alpha_n\neq0$ and $\|\alpha\|_{*}<\frac{1}{\sqrt{2}}$, and let
$A_{\alpha}:\ell^1\rightarrow\ell^{\infty}$ be defined  by
\begin{align}\label{def:Aa}
(A_{\alpha}x^*)_n:=\alpha^2_nx^*_n+2\sum_{i>n}\alpha_n \alpha_ix^*_i,
\quad \forall x^*=(x^*_n)_{n\in\NN}\in\ell^1.\end{align}
\allowdisplaybreaks Now let $ P_{\alpha}$ and $S_{\alpha}$
respectively
  be the symmetric part and skew
 part of $A_{\alpha}$.  Let $T_{\alpha}:c_{0}\rightrightarrows X^*$  be defined by
\begin{align}\gra T_{\alpha}&
:=\big\{(-S_{\alpha} x^*,x^*)\mid x^*\in X^*,
 \langle \alpha, x^*\rangle=0\big\}
=\big\{(-A_{\alpha} x^*,x^*)\mid x^*\in X^*,
 \langle \alpha, x^*\rangle=0\big\}\nonumber\\
&=\big\{\big((-\sum_{i>n}
\alpha_n \alpha_ix^*_i+\sum_{i<n}\alpha_n \alpha_ix^*_i)_{n\in\NN}, x^*\big)
\mid x^*\in X^*, \langle \alpha, x^*\rangle=0\big\}.\label{PBABA:Ea1}
\end{align}
Then, by Example~\ref{FPEX:1} and
Theorem~\ref{Theoauc:1},
 \begin{align*}
 \mathcal{B}_{T_{\alpha}}(-Aa^*, a^*)>\mathcal{B}^*_{T_{\alpha}}( a^*,-Aa^*),\quad\forall a^*\notin\{e\}_{\bot}.
 \end{align*}
In consequence, $ \mathcal{B}_{T_{\alpha}}$ is not autoconjugate.\qede
\end{example}

The latter raises a very interesting question:
\begin{problem}
Is there a maximally monotone operator on some (resp. every)  non-reflexive Banach space that has no autoconjugate representer?
\end{problem}

\subsection{The Fitzpatrick function and differentiability}

 The \emph{Fitzpatrick function}
introduced in \cite{Fitz88} was discovered precisely to provide a
more transparent convex alternative to the earlier saddle function
construction due to Krauss \cite{BorVan}---we have not discussed
saddle-functions but they produce interesting maximally monotone
operators \cite[\S33 \& \S37]{Rock70}. At the time, Fitzpatrick's
interests were more centrally in the differentiation theory for
convex functions and monotone operators.

The search for results relating when a maximally monotone $T$ is
single-valued to differentiability of $F_T$ did not yield
fruit, and he put the function aside. This is still the one area
where to the best of our knowledge $F_T$ has proved of
very little help---in part because generic properties of $\dom
F_T$ and of $\dom(T)$ seem poorly related.

 That said,
monotone operators often provide efficient ways to prove
differentiability of convex functions. The discussion of Mignot's
theorem in\cite{BorVan}  is somewhat representative of how this works as is the treatment in \cite{ph}.
By contrast, as we have seen the Fitzpatrick  function and its
relatives now provide the easiest access to a gamut of solvability
and boundedness results.

\section{Other results}\label{Sec:Other}

\subsection{Renorming results: Asplund averaging}

Edgar Asplund \cite{Asplund} showed how to exploit convex analysis
to provide remarkable results on the existence of equivalent norms
with nice properties. Most optimizers are unaware of his lovely idea
which  we recast in the language of inf-convolution. Our development
is a reworking of that in Day~\cite{DayBook}. Let us start with
two equivalent norms $\|\cdot\|_1$ and $\|\cdot\|_2$ on a Banach
space $X$. We consider the quadratic forms $p_0:=\|\cdot\|_1^2/2$ and
$q_0:=\|\cdot\|_2^2/2$, and average  for $n\geq0$ by
\begin{align} \label{sumn} p_{n+1}(x):=\frac{p_n(x)+q_n(x)}{2}\mbox{~and~}
 q_{n+1}(x):=\frac{(p_n \Box q_n)(2x)}2.\end{align}
Let $C>0$ be such that $q_0\leq p_0\leq (1+C)q_0$.  By the
construction of $p_n$ and $q_n$, we have $q_n\leq p_n\leq
(1+4^{-n}C)q_n$ (\cite[Lemma]{Asplund}) and so
 the sequences $(p_n)_{n\in\NN}$, $(q_n)_{n\in\NN}$ converge to a common limit: a convex quadratic function $p$.

We shall  show  that the norm $\|\cdot\|_3:=\sqrt{2p}$ typically
inherits the good properties of both $\|\cdot\|_1$ and
$\|\cdot\|_2$. This is based on the following fairly straightforward result.

\begin{theorem}[Asplund]\emph{(See
\cite[Theorem~1]{Asplund}.)}\label{RenTh:1}
If either $p_0$ or $q_0$ is strictly  convex, so is $p$.
\end{theorem}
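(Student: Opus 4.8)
The plan is to work throughout with the midpoint convexity defect. For a convex $g$ and points $x,y$ write $\Delta_g(x,y):=\tfrac12\big(g(x)+g(y)\big)-g\big(\tfrac{x+y}2\big)\ge0$; since each $p_n,q_n$ and $p$ is finite, continuous, supercoercive and positively homogeneous of degree two (being, up to the factor $\tfrac12$, the square of an equivalent norm), midpoint strict convexity is equivalent to full strict convexity, so it suffices to prove $\Delta_p(x,y)>0$ for every $x\neq y$. First I would record the monotone sandwich implicit in the construction: since $q_n\le p_n$ one has $p_{n+1}=\tfrac12(p_n+q_n)\le p_n$, while $p_n\Box q_n\ge q_n\Box q_n$ together with $(q_n\Box q_n)(2\cdot)=2q_n(\cdot)$ gives $q_{n+1}\ge q_n$; hence $q_n\uparrow p$ and $p_n\downarrow p$, so that $q_n\le p\le p_n$ and, by Asplund's estimate, $0\le p_n-q_n\le 4^{-n}C\,p_0$. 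Note that $\Delta$ is linear in $g$, which makes the arithmetic-mean step transparent.

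Next I would establish the purely qualitative propagation: $p_n$ and $q_n$ are strictly convex for every $n\ge1$ once $p_0$ is. For the arithmetic-mean step this is immediate, since a sum of a strictly convex and a convex function is strictly convex, so $p_{n+1}=\tfrac12(p_n+q_n)$ is strictly convex as soon as $p_n$ or $q_n$ is. For the infimal-convolution step I would pass to conjugates: by Fact~\ref{lem:conjugate-convo}, $(p_n\Box q_n)^*=p_n^*+q_n^*$, and a proper lsc convex function is strictly convex if and only if its conjugate has an everywhere single-valued subdifferential, equivalently (all these conjugates being finite and continuous by Fact~\ref{basic-prin}) is G\^ateaux differentiable, via the Fenchel--Young correspondence of Proposition~\ref{prop:Fenchel-Young-fd} and Proposition~\ref{GatPC}(ii). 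Since $p_n$ strictly convex forces $p_n^*$ to be G\^ateaux differentiable, the sum $p_n^*+q_n^*$ is differentiable, whence $p_n\Box q_n$, and therefore $q_{n+1}$, is strictly convex. Thus both sequences are strictly convex from $n=1$ on.

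The crux — and the real content of the theorem — is that strict convexity must now be carried to the limit, and this is where I expect the main obstacle to lie: strict convexity is a qualitative, non-uniform property and is \emph{not} preserved under pointwise (or even uniform) limits, so the per-step positivity above is by itself useless. Here I would exploit the geometric rate. The arithmetic-mean recursion gives the exact identity $\Delta_{p_{n+1}}(x,y)=\Delta_{p_n}(x,y)-\tfrac12\Delta_{p_n-q_n}(x,y)$, and since $0\le p_n-q_n\le 4^{-n}C\,p_0$ the correction satisfies $|\Delta_{p_n-q_n}(x,y)|\le 4^{-n}C\,K(x,y)$ for a finite constant depending only on $p_0(x),p_0(y),p_0(\tfrac{x+y}2)$; telescoping then yields the convergent expression $\Delta_p(x,y)=\Delta_{p_1}(x,y)-\tfrac12\sum_{n\ge1}\Delta_{p_n-q_n}(x,y)$. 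To see the right-hand side is strictly positive I would feed in the strict convexity injected by the infimal-convolution step: writing $r_n:=p_n\Box q_n$ with optimal splits $a,a'$ of $2x,2y$, one has $\Delta_{q_{n+1}}(x,y)=\tfrac12\Delta_{r_n}(2x,2y)\ge\tfrac12\big(\Delta_{p_n}(a,a')+\Delta_{q_n}(2x-a,2y-a')\big)$, so the strict convexity of $p_0$ enters through a genuinely nonvanishing defect at the split points.

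The technical heart, which I expect to be the hardest part, is precisely to show that this injected defect does not itself decay to zero, i.e.\ that the optimal splitting maps of the homogeneous degree-two functions $p_n\Box q_n$ do not degenerate to $a=a'$ for $x\neq y$ in the limit. My plan is to use the geometric ($4^{-n}$) convergence in the sandwich to prove that these splitting maps (the resolvent-type selections associated with the infimal convolutions) converge uniformly on the unit sphere to a nondegenerate limit map, so that $\Delta_{p_n}(a,a')$ stays bounded below by a fixed positive multiple of the modulus coming from the strict convexity of $p_0$; combined with the telescoped identity this forces $\Delta_p(x,y)>0$, establishing that $p$, and hence the averaged norm $\|\cdot\|_3=\sqrt{2p}$, is strictly convex. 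The case in which $q_0$ is strictly convex is identical by the symmetry of the two sequences.
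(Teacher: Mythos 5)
Your setup is sound --- the reduction to midpoint defects $\Delta_g(x,y)=\tfrac12\bigl(g(x)+g(y)\bigr)-g\bigl(\tfrac{x+y}{2}\bigr)$, the sandwich $q_n\le p\le p_n$ with $0\le p_n-q_n\le 4^{-n}Cp_0$, and the identity $\Delta_{p_{n+1}}=\tfrac12\bigl(\Delta_{p_n}+\Delta_{q_n}\bigr)$ are all correct and are exactly the right ingredients --- but the argument does not close. The entire weight of the proof rests on the final claim that the optimal splitting maps of $p_n\Box q_n$ converge uniformly to a ``nondegenerate limit map,'' and for that claim you offer a plan rather than a proof; it is at least as hard as the theorem itself. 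Worse, the objects it invokes need not exist: the infimum defining $p_n\Box q_n$ need not be attained in a general Banach space, so the optimal splits $a,a'$ of $2x,2y$ are not available, and nothing in the geometric estimate prevents near-optimal splits from degenerating to $a=a'$ even when $x\neq y$. A secondary gap is the step ``$p_n$ strictly convex forces $p_n^*$ to be G\^ateaux differentiable'': this implication fails outside reflexive spaces, since $\partial p_n^*(x^*)\subseteq X^{**}$ may contain points of $X^{**}\setminus X$ that strict convexity of $p_n$ on $X$ does not control (only the reverse direction, smoothness of the conjugate implying rotundity of the predual function, holds in general). Fortunately, neither the per-step propagation nor any analysis of the inf-convolution splittings is needed.

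The paper does not prove the theorem --- it cites Asplund's original article --- and Asplund's argument is a short rate comparison using only the pieces you already have. Since $\Delta_{q_n}\ge 0$, the averaging identity gives $\Delta_{p_{n+1}}(x,y)\ge\tfrac12\Delta_{p_n}(x,y)$, hence $\Delta_{p_0}(x,y)\le 2^{n}\Delta_{p_n}(x,y)$ for all $n$. On the other hand, suppose $\Delta_p(x,y)=0$ for some $x\neq y$. Using $p_n\ge p$ at the midpoint and $p\ge q_n\ge p_n-4^{-n}Cp_0$ at the endpoints,
\begin{equation*}
\Delta_{p_n}(x,y)\;\le\;\frac{p_n(x)+p_n(y)}{2}-p\Bigl(\tfrac{x+y}{2}\Bigr)
\;=\;\frac{(p_n-p)(x)+(p_n-p)(y)}{2}\;\le\;4^{-n}C\,\frac{p_0(x)+p_0(y)}{2}.
\end{equation*}
Combining the two displays yields $\Delta_{p_0}(x,y)\le 2^{-n}C\bigl(p_0(x)+p_0(y)\bigr)/2\to 0$, so $\Delta_{p_0}(x,y)=0$, contradicting strict convexity of $p_0$. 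If instead $q_0$ is strictly convex, note that $\Delta_{p_1}\ge\tfrac12\Delta_{q_0}>0$ off the diagonal and run the identical comparison starting at $n=1$. The whole theorem is the race between the $2^{-n}$ lower propagation rate and the $4^{-n}$ upper bound; the inf-convolution step enters only through $\Delta_{q_n}\ge 0$, i.e.\ through convexity of $q_n$, which you already established.
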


We  make a very simple application in the case that $X$ is reflexive.
In \cite{LiDES1}, Lindenstrauss showed that every reflexive Banach space has
an equivalent strictly convex norm. The reader may consult \cite[Chapter 4]{BorVan} for more general results. Now take $\|\cdot\|_1$ to be an equivalent strictly convex norm on $X$, and take
$\|\cdot\|_2$ to be
  an equivalent smooth norm with its dual norm  on $X^*$ strictly convex. Theorem~\ref{RenTh:1} shows that  $p$ is strictly convex. We note that by Corollary \ref{infc} and Fact \ref{lem:conjugate-convo} $$q_{n+1}^*(x^*):=\frac{q_n^*(x^*)+q_n(x^*)}{2}\mbox{~ and~}p_{n+1}^*(x^*):=\frac{(p_n^* \Box q_n^*)(2x^*)}2$$ so that Theorem \ref{RenTh:1} applies to $p_0^*$ and $q_0^*$. Hence
  $p^*$  is strictly convex (see also \cite[Proof of Corollary~1, page~111]{Diestel}).  Hence $\|\cdot\|_3(:=\sqrt{2p})$ and its dual norm ($:=\sqrt{2p^*}$)
    are equivalent strictly convex norms  on $X$ and $X^*$ respectively.

    Hence
$\|\cdot\|_3$ is an equivalent strictly convex  and smooth norm (since its dual is strictly convex). The existence of such a norm was one ingredient of Rockafellar's first proof of the Sum theorem.

\subsection{Resolvents of maximally monotone operators and connection with convex functions}

It is well known since Minty, Rockafellar, and Bertsekas-Eckstein
that in Hilbert spaces, monotone operators can be analyzed from the
alternative viewpoint of certain nonexpansive (and thus Lipschitz
continuous) mappings, more precisely, the so-called resolvents.
Given a Hilbert space $H$ and a set-valued operator $A\colon
H\rightrightarrows H$, the \emph{resolvent} of $A$ is
$$J_A:=(\Id+A)^{-1}.$$
The history of this notion goes back to Minty \cite{Min62} (in
Hilbert spaces) and Brezis, Crandall and Pazy \cite{BCP70} (in
Banach spaces). There exist more general notions of resolvents based
on different tools, such as the normalized duality mapping, the
Bregman distance or other maximally monotone operators (see
\cite{KT-2008,AB-1997,BWY-2010}). For more details on resolvents on
Hilbert spaces see \cite{BC2011}.

The Minty surjectivity theorem (Theorem \ref{MonMinty} \cite{Min62})
implies that a monotone operator is maximally monotone if and only if the
resolvent is single-valued with full domain. In fact, a classical
result due to Eckstein-Bertsekas \cite{EB-1992} says even more.
Recall that a mapping $T:H\to H$ is \emph{firmly nonexpansive} if
for all $x,y\in H$, $\|Tx-Ty\|\leq \langle Tx-ty,x-y\rangle$.

\begin{theorem}\label{thm:maxmon-firmnonexp}
Let $H$ be a Hilbert space. An operator $A\colon H\rightrightarrows
H$ is (maximal) monotone if and only if $J_A$ is firmly
nonexpansive (with full domain).
\end{theorem}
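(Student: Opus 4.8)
The plan is to exploit the elementary correspondence
$u\in J_Ax\iff x-u\in Au\iff (u,x-u)\in\gra A$,
which translates every statement about the resolvent into a statement about pairs in $\gra A$. I would first establish the unparenthesized equivalence ($A$ monotone $\iff$ $J_A$ firmly nonexpansive), where firm nonexpansiveness is read in its squared form $\|J_Ax-J_Ay\|^2\leq\langle J_Ax-J_Ay,x-y\rangle$, and then upgrade it to the parenthesized version ($A$ maximally monotone $\iff$ $J_A$ firmly nonexpansive with full domain) by invoking the Minty surjectivity theorem (Theorem~\ref{MonMinty}), using that $\dom J_A=\ran(\Id+A)$.

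For the forward implication, assume $A$ is monotone. First I would check that $J_A$ is single-valued: if $u_1,u_2\in J_Ax$ then $x-u_1\in Au_1$ and $x-u_2\in Au_2$, so monotonicity gives $\langle u_1-u_2,(x-u_1)-(x-u_2)\rangle=-\|u_1-u_2\|^2\geq0$, forcing $u_1=u_2$. Then, for $x,y\in\dom J_A$ with $u:=J_Ax$ and $v:=J_Ay$, the pairs $(u,x-u),(v,y-v)$ lie in $\gra A$, so monotonicity reads $\langle u-v,(x-u)-(y-v)\rangle\geq0$; rearranging this yields $\|u-v\|^2\leq\langle u-v,x-y\rangle$, which is exactly firm nonexpansiveness of $J_A$ on its domain.

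For the reverse implication, assume $J_A$ is firmly nonexpansive (hence single-valued on $\dom J_A$). Given $(u,u^*),(v,v^*)\in\gra A$, I would set $x:=u+u^*$ and $y:=v+v^*$, so that $J_Ax=u$ and $J_Ay=v$; feeding these into the firm-nonexpansiveness inequality and expanding $\langle u-v,x-y\rangle=\|u-v\|^2+\langle u-v,u^*-v^*\rangle$ leaves precisely $\langle u-v,u^*-v^*\rangle\geq0$, i.e. monotonicity of $A$. Finally, for the maximal case I would combine this equivalence with Theorem~\ref{MonMinty}: $A$ is maximally monotone iff $\ran(\Id+A)=H$, and since $\ran(\Id+A)=\dom J_A$, maximality corresponds exactly to $J_A$ having full domain. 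The argument is essentially bookkeeping on the resolvent correspondence; the only genuine external input is Minty's theorem for the full-domain clause, so the main point to watch is keeping the single-valuedness and domain conventions consistent with the definition of ``firmly nonexpansive,'' which presupposes that $J_A$ is a genuine function on its domain.
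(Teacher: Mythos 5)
Your proposal is correct and is exactly the standard argument: the paper itself gives no proof of this theorem (it is quoted as a classical result of Eckstein--Bertsekas), and your resolvent correspondence $u\in J_Ax\iff(u,x-u)\in\gra A$ together with Minty's Theorem~\ref{MonMinty} for the full-domain clause is precisely the intended route. The only point worth flagging is that you silently (and rightly) read the paper's displayed definition of firm nonexpansiveness in its squared form $\|Tx-Ty\|^2\leq\langle Tx-Ty,x-y\rangle$; the inequality as printed in the paper omits the square and is a typo.
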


\begin{example}
Given a closed convex set $C\subseteq H$, the normal cone operator
of $C$, $N_C$, is a maximally monotone operator whose resolvent can
be proved to be the metric projection onto $C$. Therefore, Theorem
\ref{thm:maxmon-firmnonexp} implies the firm nonexpansivity of the
metric projection.\qede
\end{example}

In the particular case when $A$ is the subdifferential of a possibly
non-differentiable convex function in a Hilbert space, whose maximal
monotonicity was established by Moreau \cite{M65} (in Banach spaces
this is due to Rockafellar \cite{R-1970}, see also \cite{BZ05,BorVan}), the resolvent turns into the
proximal mapping in the following sense of Moreau. If $f:H\to \RX$ is a lower
semicontinuous convex function defined on a Hilbert space $H$, the
\emph{proximal or proximity} mapping is the operator
$\prox_f : H\to H$ defined by
$$\prox_f(x):=\underset{y\in H}{\textrm{argmin}}\left\{f(y)+\frac{1}{2}\|x-y\|^2\right\}.$$
This mapping is well-defined because $\textrm{prox}_f(x)$ exists and
is unique for all $x\in H$. Moreover, there exists the following
subdifferential characterization: $u = \textrm{prox}_f(x)$ if and
only if $x-u\in\partial f(u)$.

\emph{Moreau's decomposition} in terms of the proximal mapping is a
powerful nonlinear analysis tool in the Hilbert setting that has been
used in various areas of optimization and applied mathematics. Moreau
established his decomposition motivated by problems in unilateral
mechanics. It can be proved readily by using the conjugate and
subdifferential.

\begin{theorem}[Moreau decomposition]
Given a lower semicontinuous convex function $f:H\to \RX$, for all
$x\in H$,
$$x=\prox_f(x) + \prox_{f^*}(x).$$
\end{theorem}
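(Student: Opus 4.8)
Let me plan the proof.

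The plan is to use the subdifferential characterization of the proximal mapping together with the Fenchel--Young equality condition from Proposition~\ref{prop:Fenchel-Young-fd}. Given $x \in H$, set $u := \prox_f(x)$. The defining variational characterization recalled just above the theorem gives $x - u \in \partial f(u)$. My goal is to show that $x - u = \prox_{f^*}(x)$, equivalently that $u = \prox_{f^*}(x)$ would follow by symmetry; but the cleanest route is to establish directly that the point $v := x - u$ satisfies the characterization $x - v \in \partial f^*(v)$, i.e.\ $u \in \partial f^*(x-u)$.

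The key step is the duality between $\partial f$ and $\partial f^*$. Since $f$ is proper, lsc and convex (using the Hörmander biconjugation result, $f = f^{**}$), the Fenchel--Young inequality \eqref{eq:Fenchel-Young-ineq-fd} holds with equality precisely on the graph of the subdifferential. Concretely, from $x - u \in \partial f(u)$ Proposition~\ref{prop:Fenchel-Young-fd} yields
\begin{equation*}
f(u) + f^*(x-u) = \langle x-u, u\rangle.
\end{equation*}
I would then argue that this equality case is symmetric: it says $(u, x-u)$ is a point of equality in Fenchel--Young for the pair $(f, f^*)$, and because $f^{**} = f$ in the Hilbert (hence reflexive) setting, the same equality reads as $f^*(x-u) + f^{**}(u) = \langle x-u, u\rangle$, which is exactly the Fenchel--Young equality condition certifying $u \in \partial f^*(x-u)$. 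Thus $x - (x-u) = u \in \partial f^*(x-u)$, so by the subdifferential characterization of the prox applied to $f^*$, we conclude $\prox_{f^*}(x) = x - u$.

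Combining the two identities gives $\prox_f(x) + \prox_{f^*}(x) = u + (x-u) = x$, which is the claim. The main obstacle, and the point that deserves the most care, is justifying the symmetry of the Fenchel--Young equality, i.e.\ that $x^* \in \partial f(x)$ is equivalent to $x \in \partial f^*(x^*)$; this is where lower semicontinuity and convexity of $f$ are essential (so that $f = f^{**}$ via the Hörmander/biconjugation proposition), since without $f = f^{**}$ one only obtains $f^* + f^{**} = \langle \cdot,\cdot\rangle$ rather than the desired statement involving $f$ itself. Once that equivalence is in hand, everything else is a one-line bookkeeping of the two prox characterizations.
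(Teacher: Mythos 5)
Your proof is correct and carries out exactly the route the paper indicates (the paper gives no detailed proof, only the remark that the result ``can be proved readily by using the conjugate and subdifferential''): the characterization $u=\prox_f(x)\iff x-u\in\partial f(u)$, the Fenchel--Young equality $f(u)+f^*(x-u)=\langle x-u,u\rangle$, and the inversion $\partial f^*=(\partial f)^{-1}$ obtained from $f=f^{**}$ in the Hilbert setting. The only blemish is the stray phrase ``equivalently that $u=\prox_{f^*}(x)$ would follow by symmetry,'' which is not what you want; the argument you actually execute for $v:=x-u$ is the right one.
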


\begin{example}
Note that for $f:=\iota_C$, with $C$ closed and convex, the proximal
mapping turns into the projection onto a closed and convex set $C$.
Therefore, this result generalizes the decomposition by orthogonal
projection on subspaces. In particular, if $K$ is a closed convex
cone (thus $\iota_K^*=\iota_{K^-}$, see Example
\ref{ex:Moreau-Rockafellar}), Moreau's decomposition provides a
characterization of the projection onto $K$:
\begin{center}
$x=y+z$ with $y\in K$, $z\in K^-$ and $\langle y,z\rangle=0$
$\Leftrightarrow $ $y=P_K x$ and $z=P_{K^-} x$.
\end{center}
This illustrates that in Hilbert space,  the Moreau decomposition can
be thought of as generalizing the decomposition into positive and
negative parts of a vector in a normed lattice \cite[\S6.7]{BorVan} to
an arbitrary convex cone.\qede
\end{example}

There is another notion associated to an operator $A$, which is
strongly related to the resolvent. That is the \emph{Yosida
approximation} of index $\lambda>0$ or the \emph{Yosida
$\lambda$-regularization}:
$$A_\lambda:=(\lambda \Id+A^{-1})^{-1} = \frac{1}{\lambda}(\Id-J_{\lambda A}).$$
If the operator $A$ is maximally monotone, so is the Yosida
approximation, and along with the resolvent they provide the
so-called \emph{Minty parametrization} of the graph of $A$ that is
Lipschitz continuous in both directions \cite{RW-1998}:
$$(J_{\lambda A}(z),A_\lambda(z))=(x,y) \Leftrightarrow z=x+y, (x,y)\in \gra A.$$

If $A=\partial f$ is the
subdifferential of a proper lower semicontinuous convex function
$f$, it turns out that the Yosida approximation of $A$ is the
gradient of the \emph{Moreau envelope} of $f$ $e_\lambda f$, defined
as the infimal convolution of $f$ and $\|\cdot\|^2/2\lambda$, that
is,
$$e_\lambda f(x):= f\,\Box\, \frac{\|\cdot\|^2}{2\lambda}=\inf_{y\in H}
\left\{f(y)+\frac{1}{2\lambda}\|x-y\|^2\right\}.$$ This justifies the
alternative term Moreau-Yosida approximation for the mapping
$(\partial f)_\lambda =(\lambda \Id+(\partial f)^{-1})^{-1}$.  This
allows to obtain a proof in Hilbert space of the connection between the convexity of the
function and the monotonicity of the subdifferential (see
\cite{RW-1998}):\emph{ a proper lower semicontinuous function is convex if
and only  its \emph{Clarke subdifferential} is monotone}.

It is worth mentioning that generally the role of the Moreau
envelope is to approximate the function, with a regularizing effect
since it is finite and continuous even though the function may not
be so. This behavior has very useful implications in convex and
variational analysis.

\subsection{Symbolic convex analysis}\label{ssec:CAS}

The thesis work of Hamilton \cite{BorHam} has provided a conceptual and effective framework (the SCAT \emph{Maple} software) for computing conjugates, subdifferentials  and infimal convolutions of functions of several variables. Key to this is the notion of \emph{iterated conjugation} (analogous to iterated integration) and a good data structure.

As a first example, with some care, the convex conjugate of the
function \[f:x \mapsto \log \left(\frac{\sinh \left(3\,x \right)}{  \sinh
 x } \right)\] can be symbolically nursed to obtain the
result
\begin{eqnarray*} g: y \mapsto \frac y2 \cdot \log \left(\frac{y+\sqrt{16-3y^2}}{4-2y}\right)+ \log \left(\frac{\sqrt{16-3y^2}-2}6\right),
\end{eqnarray*}
with domain $[-2,2]$.

Since the conjugate of $g$ is much more
easily computed to be $f$, this produces a symbolic computational
proof that $f$ and $g$ are convex and are mutually conjugate.

Similarly, \emph{Maple}  produces the conjugate of $x \mapsto \exp(\exp(x))$ as
$y \mapsto y\left(\log \left( y \right)-W \left( y \right)  -1/W \left( y \right)\right)$
in terms of the \emph{Lambert's W} function---the multi-valued inverse of $z \mapsto z e^z$. This function is unknown to most humans but is built into both \emph{Maple} and \emph{Mathematica}. Thus Maple knows that to order five
\[g(y)=-1+ \left(-1+\log  y  \right) y-{\frac {1}{2}}{y}^{2}+
{\frac {1}{3}}{y}^{3}-{\frac {3}{8}}{y}^{4}+O \left( {y}^{5} \right).
\]

Figure \ref{fig:expexp} shows the \emph{Maple}-computed conjugate after the \emph{SCAT} package is loaded:
\begin{figure}[h!]
\centering
\includegraphics[height=.66\textwidth]{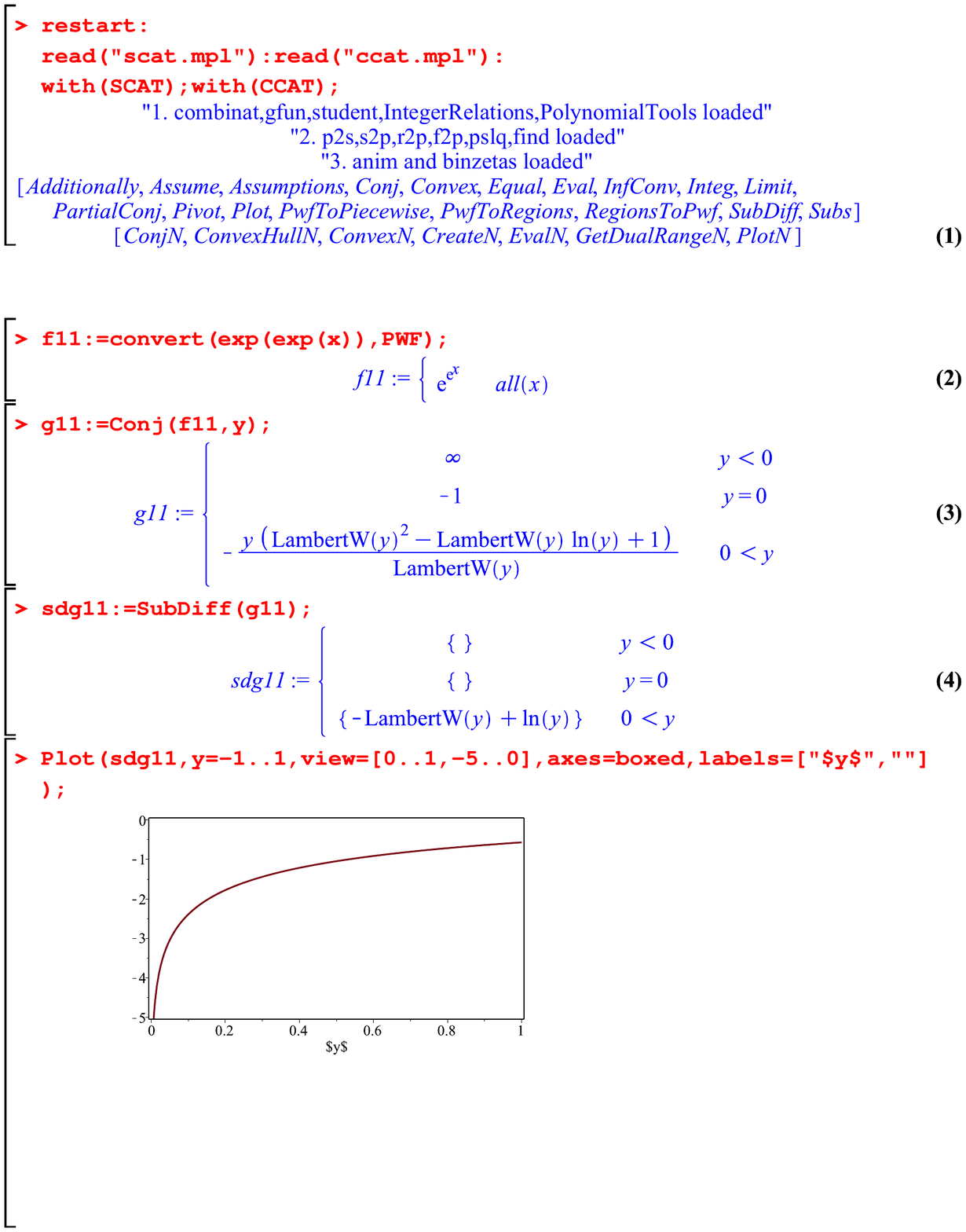}
\caption{The conjugate and subdifferential of $\exp\exp$.}\label{fig:expexp}
\end{figure}
There is a corresponding numerical program \emph{CCAT} \cite{BorHam}. Current work is adding the capacity to symbolically compute convex compositions---and so in principle Fenchel duality.

\subsection{Partial Fractions and Convexity}\label{partial-fractions}

We consider a network {\em objective function} $p_N$ given by
$$
p_N(q):=\sum_{\sigma \in S_N} \left(\prod_{i=1}^N
\frac{q_{\sigma(i)}}{\sum_{j=i}^N q_{\sigma(j)}}\right)
\left(\sum_{i=1}^N \frac{1}{\sum_{j=i}^N q_{\sigma(j)}}\right),
$$
summed over {\em all} $N!$ permutations; so a typical term is
$$
\left(\prod_{i=1}^N \frac{q_i}{\sum_{j=i}^N q_j}\right)
\left(\sum_{i=1}^N \frac{1}{\sum_{j=i}^n q_j}\right).
$$
For example, with $N=3$ this is
$$
q_1q_2q_3\left(\frac{1}{q_1+q_2+q_3}\right)\left(\frac{1}{q_2+q_3}\right)
 \left(\frac{1}{q_3}\right)
 \left(\frac{1}{q_1+q_2+q_3}+\frac{1}{q_2+q_3}+\frac{1}{q_3}\right).
$$
This arose as the objective function in research into coupon
collection.  The researcher, Ian Affleck,
wished to show $p_N$ was {\em convex} on the positive orthant.

First, we tried to simplify the expression for $p_N$.  The {\em
partial fraction decomposition} gives:
\begin{align}
p_1(x_1)&={\frac 1 {x_1}},  \label{par-simple}\\
p_2(x_1,x_2)&={\frac 1 {x_1}}+{\frac 1 {x_2}} - {\frac 1  {x_1+x_2}},\nonumber\\
p_3(x_1,x_2,x_3) &= {\frac 1 {x_1}}+{\frac 1 {x_2}}+{\frac 1{x_3}}
-{\frac 1{x_1+x_2}}-{\frac 1{x_2+x_3}}- {\frac 1{ x_1+x_3}}\nonumber
+{\frac 1{x_1+x_2+x_3}}.
\end{align}
In \cite{SenZik}, the simplified expression of $P_N$ is given by
\begin{align*}
p(x_1,x_2,\cdots,x_N)&:=
\sum_{i=1}^N\frac{1}{x_i}-\sum_{1\leq i<j\leq N}\frac{1}{x_i+x_j
}+\sum_{1\leq i<j<k \leq N}\frac{1}{x_i+x_j+x_k
}\\
&\quad-\ldots+(-1)^{N-1}\frac{1}{x_1+x_2+\ldots+x_N}.
\end{align*}
Partial fraction decompositions\index{partial fraction
decomposition} are another arena in which computer algebra systems
are hugely useful.  The reader is invited to try performing the
third case in \eqref{par-simple} by hand.  It is tempting to
predict the ``same'' pattern will hold for $N=4$.  This is easy to
confirm (by computer if not by hand) and so we are led to:

\begin{conj} For each $N\in\NN$, the function
\begin{eqnarray}\label{par-int} p_N(x_1,\cdots,x_N) =
\int_0^1\left(1-\prod_{i=1}^N (1-t^{x_i})\right) {\frac{dt} t}
\end{eqnarray}
is convex; indeed $1/p_N$ is concave.
\end{conj}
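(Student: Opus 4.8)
The plan is to recast $p_N$ as an expected maximum of independent exponentials and then to prove the stronger of the two assertions, namely that $1/p_N$ is concave, from which convexity of $p_N$ is automatic. First I would substitute $t=e^{-s}$ in the defining integral to obtain
\[
p_N(x)=\int_0^{\infty}\Big(1-\prod_{i=1}^N\big(1-e^{-s x_i}\big)\Big)\,ds ,
\]
valid for $x=(x_1,\dots,x_N)$ in the open positive orthant. Writing $E_1,\dots,E_N$ for independent random variables with $E_i$ exponentially distributed of rate $x_i$ (so $P(E_i>s)=e^{-sx_i}$), the bracket equals $P(\max_i E_i>s)$, whence
\[
p_N(x)=\int_0^{\infty}P\big(\textstyle\max_i E_i>s\big)\,ds=\mathbb{E}\big[\textstyle\max_i E_i\big].
\]
Expanding $\prod_i(1-e^{-sx_i})$ by inclusion--exclusion and integrating term by term recovers the closed form $\sum_{\varnothing\neq S}(-1)^{|S|+1}/\sum_{i\in S}x_i$ displayed above, confirming the representation. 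I would stress at the outset that the naive route fails: for fixed $s$ the integrand is \emph{not} convex in $x$ (already for $N=2$ its Hessian has negative determinant near the origin), so convexity of $p_N$ is a genuinely integrated effect and cannot be obtained termwise.

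Next I would reduce both assertions to a single inequality. Rescaling the rates by $c>0$ rescales each $E_i$, hence $\max_i E_i$, by $1/c$, so the representation gives $p_N(cx)=c^{-1}p_N(x)$; thus $g:=1/p_N$ is positive and positively homogeneous of degree $1$ on the open orthant. For such a $g$, concavity is equivalent to superadditivity $g(x+y)\ge g(x)+g(y)$ (one direction being $g(x+y)=2g(\tfrac{x+y}{2})\ge g(x)+g(y)$, the converse using homogeneity). Moreover, once $g$ is concave and positive, $p_N=1/g$ is the composition of the convex nonincreasing map $s\mapsto 1/s$ with the concave $g$, hence convex; so the concavity of $1/p_N$ delivers the convexity of $p_N$ for free. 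Equivalently, the target is the convexity of the sublevel set $\{x:p_N(x)\le 1\}=\{x:\mathbb{E}[\max_i E_i]\le1\}$.

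The crux is therefore the superadditivity $1/p_N(x+y)\ge 1/p_N(x)+1/p_N(y)$. Here I would exploit that $\mathrm{Exp}(x_i+y_i)$ is the law of $\min(U_i,V_i)$ for independent $U_i\sim\mathrm{Exp}(x_i)$, $V_i\sim\mathrm{Exp}(y_i)$; coupling all $2N$ variables independently gives $p_N(x+y)=\mathbb{E}[\max_i\min(U_i,V_i)]$, $p_N(x)=\mathbb{E}[\max_iU_i]$, $p_N(y)=\mathbb{E}[\max_iV_i]$, so that the claim reads
\[
\mathbb{E}\big[\textstyle\max_i\min(U_i,V_i)\big]\ \le\ \frac{\mathbb{E}[\max_iU_i]\,\mathbb{E}[\max_iV_i]}{\mathbb{E}[\max_iU_i]+\mathbb{E}[\max_iV_i]} .
\]
The obvious pathwise bound $\max_i\min(U_i,V_i)\le\min(\max_iU_i,\max_iV_i)$ only yields $p_N(x+y)\le\min(p_N(x),p_N(y))$, which is strictly weaker than this harmonic-type bound, so a genuinely distributional argument is required. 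The two attacks I would pursue are: (i) induction on $N$ through the recursion $p_N=p_{N-1}(x_1,\dots,x_{N-1})+\mathbb{E}\big[(E_N-\max_{i<N}E_i)^+\big]$ obtained by peeling off the last factor; and (ii) verifying directly that the Hessian $\nabla^2p_N=\int_0^{\infty}\Phi(s,x)\big[\operatorname{diag}(sr_j+r_j^2)-rr^{\top}\big]\,ds$, with $\Phi=\prod_i(1-e^{-sx_i})$ and $r_j=se^{-sx_j}/(1-e^{-sx_j})$, is positive semidefinite.

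The main obstacle is precisely the last point: the bracketed matrix $\operatorname{diag}(sr_j+r_j^2)-rr^{\top}$ is \emph{indefinite} for small $s$ (for $N=2$ its determinant is $r_1r_2(s^2-r_1r_2)<0$ there), so both the superadditivity and the equivalent sublevel-set convexity fail pointwise in $s$ and hold only after integration. Hence the proof must extract the averaging/cancellation inherent in $\mathbb{E}[\max_i E_i]$. I expect the cleanest resolution to come from the probabilistic formulation---for instance, by analysing the order in which the exponential ``clocks'' ring and comparing the $x$-, $y$- and $(x+y)$-systems on a common probability space---rather than from a brute-force Hessian computation, which becomes unwieldy already for moderate $N$.
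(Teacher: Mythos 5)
Your setup is correct and your diagnosis of the obstruction is accurate --- the substitution $t=e^{-s}$ does give $p_N(x)=\mathbb{E}[\max_i E_i]$ with $E_i\sim\mathrm{Exp}(x_i)$ independent, and the integrand $1-\prod_i(1-e^{-sx_i})$ is indeed not convex in $x$ pointwise in $s$. But the proposal then stops short of a proof: you reduce concavity of $1/p_N$ to a superadditivity inequality, observe that the pathwise bound is too weak, and offer two ``attacks'' (induction, Hessian) together with the hope that a distributional argument will work. Neither attack is carried out, so neither the convexity of $p_N$ nor the concavity of $1/p_N$ is actually established.

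The missing idea is a one-line reparametrization that you are very close to but never make. Since $E_i\sim\mathrm{Exp}(x_i)$ has the same law as $Y_i/x_i$ with $Y_i\sim\mathrm{Exp}(1)$, one can pull all the $x$-dependence out of the randomness:
\begin{equation*}
p_N(x)=\mathbb{E}\Big[\max_i \tfrac{Y_i}{x_i}\Big]=\int_{\RR_{++}^N}e^{-(y_1+\cdots+y_N)}\max\Big(\tfrac{y_1}{x_1},\dots,\tfrac{y_N}{x_N}\Big)\,dy,
\end{equation*}
which is exactly the content of Lemma~\ref{joint-expectation} (proved in \cite[\S1.7]{BBG} without probability). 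In this representation the measure $e^{-\sum y_i}\,dy$ no longer depends on $x$, and the integrand $\max_i(y_i/x_i)$ \emph{is} convex in $x$ for each fixed $y$ (a maximum of the convex functions $x\mapsto y_i/x_i$ on the positive orthant), so convexity of $p_N$ follows termwise after all --- your conclusion that it ``cannot be obtained termwise'' is true only for the parametrization you chose. Your homogeneity reduction (degree $-1$ homogeneity of $p_N$, hence concavity of $1/p_N$ is equivalent to superadditivity) is correct and is a genuine contribution beyond what the paper spells out, but the superadditivity itself remains unproved in your proposal; the paper likewise does not prove it in-text and defers the concavity of $1/p_N$ to \cite[\S1.7]{BBG}.
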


One may check  symbolically that this is true for $N<5$ via a
large Hessian computation.  But this is impractical for larger
$N$.  That said, it is easy to numerically sample the Hessian for
much larger $N$, and it is always positive definite.
Unfortunately, while the integral is convex, the integrand is not,
or we would be done.  Nonetheless, the process was already a
success, as the researcher was able to rederive his objective
function in the form of (\ref{par-int}).

A year after, Omar Hjab suggested re-expressing
(\ref{par-int}) as the {\em joint expectation}\, of Poisson
distributions.\footnote{ See ``Convex, II'' {\sl SIAM Electronic Problems and
Solutions} at \url{http://www.siam.org/journals/problems/downloadfiles/99-5sii.pdf}.}  Explicitly,
this leads to:

\begin{lemma} {\rm \cite[\S1.7]{BBG}} \label{joint-expectation}
 If $x=(x_1,\cdots,x_n)$ is a point in the positive orthant
$\RR_{++}^n,$ then
\begin{align}
\int_0^\infty\left(1-\prod_{i=1}^n(1-e^{-tx_i})\right)\,dt &=
\left(\prod_{i=1}^n x_i\right)\int_{\RR_{++}^n}e^{-\langle
x,y\rangle}\max(y_1,\cdots,y_n)\,dy, \nonumber \\[-6pt]
&\strut \label{j-exp}
\end{align}
where $\langle x,y\rangle=x_1y_1+\cdots+x_ny_n$ is the Euclidean
inner product.
\end{lemma}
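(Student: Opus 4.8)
The plan is to read both sides through the probabilistic lens suggested in the text and then discharge the identity by Tonelli's theorem. Introduce independent random variables $Y_1,\dots,Y_n$ with $Y_i$ exponentially distributed of rate $x_i$, so that their joint density on $\RR_{++}^n$ is $\big(\prod_i x_i\big)\,e^{-\langle x,y\rangle}$. Then the right-hand side is exactly $\mathbb{E}\big[\max(Y_1,\dots,Y_n)\big]$---the expected time to collect all $n$ coupons---and the tail formula $\mathbb{E}[M]=\int_0^\infty \mathbb{P}(M>t)\,dt$ for the nonnegative variable $M:=\max_i Y_i$, combined with independence, $\mathbb{P}(M>t)=1-\prod_i\mathbb{P}(Y_i\le t)=1-\prod_i(1-e^{-x_i t})$, recovers the left-hand side. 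I would, however, present this only as motivation and carry out the argument measure-theoretically, so that no probability machinery is formally invoked.

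Concretely, first I would write the maximum via its layer-cake representation
$$\max(y_1,\dots,y_n)=\int_0^\infty \mathbf{1}_{\{\max_i y_i>t\}}\,dt,\qquad y\in\RR_{++}^n,$$
substitute it into the right-hand integral, and interchange the $t$-integral with the $y$-integral. Since the integrand $e^{-\langle x,y\rangle}\mathbf{1}_{\{\max_i y_i>t\}}$ is nonnegative and measurable on $\RR_{++}^n\times(0,\infty)$, Tonelli's theorem licenses the swap with no extra integrability hypothesis. This is the only analytic subtlety, and it is genuinely routine precisely because everything in sight is nonnegative.

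After the swap the inner integral factorizes. Using $\mathbf{1}_{\{\max_i y_i>t\}}=1-\prod_i\mathbf{1}_{\{y_i\le t\}}$ together with the product structure of $e^{-\langle x,y\rangle}$ and of the domain $\RR_{++}^n=\prod_i(0,\infty)$, I would evaluate
$$\int_{\RR_{++}^n}e^{-\langle x,y\rangle}\,dy=\prod_i\frac{1}{x_i}
\quad\text{and}\quad
\int_{\RR_{++}^n}e^{-\langle x,y\rangle}\prod_i\mathbf{1}_{\{y_i\le t\}}\,dy=\prod_i\frac{1-e^{-x_it}}{x_i},$$
each reducing to the one-variable facts $\int_0^\infty e^{-x_is}\,ds=1/x_i$ and $\int_0^t e^{-x_is}\,ds=(1-e^{-x_it})/x_i$. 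Subtracting exhibits the inner integral as $\big(\prod_i x_i\big)^{-1}\big(1-\prod_i(1-e^{-x_it})\big)$; multiplying by the prefactor $\prod_i x_i$ and integrating in $t$ then produces exactly $\int_0^\infty\big(1-\prod_i(1-e^{-x_it})\big)\,dt$, which is the left-hand side.

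The one remaining point I would record is that the left-hand integral is finite, so both sides are well defined: as $t\to0^+$ the integrand tends to $1$ and stays bounded, while as $t\to\infty$ one has $1-\prod_i(1-e^{-x_it})=\sum_i e^{-x_it}+O\!\big(e^{-2(\min_i x_i)t}\big)$, which is integrable on $[1,\infty)$. I expect no genuine obstacle here: the heart of the argument is the layer-cake identity plus Tonelli, and the main thing to get right is the bookkeeping of the factorization so that the surviving factor is precisely $1-\prod_i(1-e^{-x_it})$.
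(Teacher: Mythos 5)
Your proof is correct and complete. Note that the paper itself does not prove this lemma: it simply cites \cite[\S1.7]{BBG}, remarking only that the proof there proceeds ``with no recourse to probability theory.'' Your argument supplies exactly the kind of self-contained derivation the paper omits, and in precisely that spirit: the coupon-collector reading via $\mathbb{E}[\max_i Y_i]$ for independent exponentials is kept as motivation, while the actual proof is the layer-cake identity $\max_i y_i=\int_0^\infty \mathbf{1}_{\{\max_i y_i>t\}}\,dt$ followed by Tonelli and the factorization $\mathbf{1}_{\{\max_i y_i>t\}}=1-\prod_i\mathbf{1}_{\{y_i\le t\}}$, with the inner integral reducing to the elementary evaluations $\int_0^\infty e^{-x_is}\,ds=1/x_i$ and $\int_0^t e^{-x_is}\,ds=(1-e^{-x_it})/x_i$. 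All steps check: Tonelli applies because the integrand is nonnegative and measurable, the bookkeeping of the product over $i$ is right, and your finiteness remark (integrand bounded near $0$, asymptotically $\sum_i e^{-x_it}$ at infinity) correctly shows both sides are finite rather than merely equal as extended reals. One incidental observation: the random variables in the probabilistic gloss are exponential, as you say, whereas the paper's surrounding prose speaks loosely of ``Poisson distributions''; your version is the accurate one.
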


It follows from the lemma---which is proven in \cite{BBG} with no recourse to probability theory---that
$$
p_N(x) =\int_{\RR_{++}^N}e^{-(y_1+\cdots+y_N)} \max\left(\frac
{y_1}{x_1},\cdots,\frac {y_N}{x_N}\right)\,dy,$$ and hence that
$p_N$ is positive, decreasing, and convex, as is the integrand. To
derive the stronger result that $1/p_N$ is concave we refer to \cite[\S1.7]{BBG}.
Observe that since $\frac{2ab}{a+b}\le\sqrt{ab}\le(a+b)/2$, it follows from
\eqref{j-exp} that $p_N$ is log-convex (and convex).  A
little more analysis of the integrand shows $p_N$ is strictly
convex on its domain. The same techniques apply when $x_k$ is replaced in \eqref{par-simple} or \eqref{par-int} by $g(x_k)$ for a concave positive function $g$.

Though much nice related work is to found in \cite{SenZik}, there is still no truly direct proof of the convexity of
$p_N$. Surely there should be! This development neatly shows both the power of computer assisted convex analysis and its current limitations.

\bigskip

Lest one think most results on the real line are easy, we challenge the reader to
prove the empirical observation that
$$p\mapsto \sqrt{p}\int_0^\infty\left|\frac{\sin x}{x}\right|^p\,d x$$
is \emph{difference convex} on $(1,\infty)$, i.e. it can be written as a difference of two convex functions \cite{BacBorw}.

\section{Concluding comments}

All researchers and practitioners in convex analysis and
optimization owe a great debt to Jean-Jacques Moreau---whether they
know so or not. We are delighted to help make his seminal role more
apparent to the current generation of scholars. For those who read
French we urge them to experience the pleasure of
\cite{M62,M63,M63_2,M65}  and especially \cite{Mnotes}. For others,
we highly recommend \cite{M66}, which follows \cite{M65} and of
which Zuhair Nashed wrote in his \emph{Mathematical Review}
MR0217617: ``There is a great need for papers of this kind; the
present paper serves as a model of clarity and motivation."

\paragraph{Acknowledgments}  The authors are grateful to
the three anonymous referees for their pertinent and constructive
comments. The authors also thank Dr.~Hristo S. Sendov for sending
them the manuscript \cite{SenZik}. The authors were all partially
supported by various Australian Research Council grants.

\end{document}